\newtheorem{remark}[theorem]{Remark}
\newtheorem{example}[theorem]{Example}
\begin{document}
\newcommand {\eps} {\varepsilon}
\newcommand {\Z} {\mathbbm{Z}}
\newcommand {\R} {\mathbbm{R}}
\newcommand {\T} {\mathbbm{T}}
\newcommand {\Q} {\mathbbm{Q}}
\newcommand {\N} {\mathbbm{N}}
\newcommand {\C} {\mathbbm{C}}
\newcommand {\I} {\mathbbm{I}}
\newcommand {\dist} {{\rm{dist}}}
\newcommand {\cl}{\mathrm{cl}}
\newcommand {\PP} {\mathbbm{P}}
\newcommand {\ang} {\measuredangle}
\newcommand {\e} {{\rm{e}}}
\newcommand {\rank} {{\rm{rank}}}
\newcommand {\Span} {{\mathrm{span}}}
\newcommand {\card} {{\rm{card}}}
\newcommand {\ED} {\mathrm{ED}}
\newcommand {\cA} {\mathcal{A}}
\newcommand {\cO} {\mathcal{O}}
\newcommand {\cF} {\mathcal{F}}
\newcommand {\cC} {\mathcal{C}}
\newcommand {\cN} {\mathcal{N}}
\newcommand {\cV} {\mathcal{V}}
\newcommand {\cG} {\mathcal{G}}
\newcommand {\cB} {\mathcal{B}}
\newcommand {\cD} {\mathcal{D}}
\newcommand {\cP} {\mathcal{P}}
\newcommand {\cQ} {\mathcal{Q}}
\newcommand {\cW} {\mathcal{W}}
\newcommand {\cT} {\mathcal{T}}
\newcommand {\cI} {\mathcal{I}}
\newcommand {\Sn}[1] {\mathcal{S}^{#1}}
\newcommand {\range} {\mathcal{R}}
\newcommand {\kernel} {\mathcal{N}}
\newcommand{\one}{\mathbb{1}}
\renewcommand{\thefootnote}{\fnsymbol{footnote}}
\newcommand{\rle}{\rotatebox[origin=c]{-90}{$\le$}}
\newcommand{\rl}{\rotatebox[origin=c]{-90}{$<$}}
\newcommand{\rg}{\rotatebox[origin=c]{-90}{$=$}}


\title{\bf Angular spectra of linear dynamical systems in discrete time}

\author{Wolf-J\"urgen Beyn\footnotemark[1]\qquad
  Thorsten H\"uls\footnotemark[1]
}
\footnotetext[1]{Department of Mathematics, Bielefeld University,  
33501 Bielefeld, Germany \\
\texttt{beyn@math.uni-bielefeld.de}, \texttt{huels@math.uni-bielefeld.de}}

\maketitle


 \begin{abstract}
   In this work we introduce the notion of an angular spectrum for a
   linear discrete time nonautonomous 
   dynamical system. The angular spectrum comprises all accumulation
   points of longtime averages formed by maximal 
   principal angles between successive subspaces generated by the
   dynamical system. The angular spectrum is bounded by angular values 
   which have previously been investigated by the authors. In this
   contribution we derive explicit formulas 
   for the angular spectrum of
   some autonomous and specific nonautonomous systems. Based on a
   reduction principle we  set up a numerical method for the general
   case; we investigate its convergence and apply the method to
   systems with a homoclinic orbit and a strange attractor. 
   Our main theoretical result is a theorem on the invariance of the
   angular spectrum under summable perturbations 
   of the given matrices (roughness theorem). It applies to systems
   with a so-called complete exponential dichotomy (CED), a concept 
   which we introduce in this paper and which imposes more stringent
   conditions than those 
   underlying the exponential dichotomy spectrum.
 \end{abstract}

\begin{keywords}
Nonautonomous dynamical systems, 
angular spectrum,
ergodic average, roughness theorems,
Sacker-Sell spectrum,
numerical approximation.   
\end{keywords}

\begin{AMS}
  37E45, 37M25, 34D09, 65Q10. 
\end{AMS}

\renewcommand*{\thefootnote}{\arabic{footnote}}

\section{Introduction}
\label{sec0}
In this paper we introduce angular spectra for linear dynamical
systems in discrete time and analyze their properties.
The main purpose of this notion is to measure the longtime average
rotation of all subspaces of a fixed dimension 
driven by the  dynamics of a nonautonomous linear system. We propose
the resulting angular spectrum as a novel 
quantitative feature  which specifies the degree of rotation caused by 
the dynamical system. On the one hand, it goes beyond the classical
notion of rotation numbers for the motion of vectors in
two-dimensional planes, and on the other hand, it  
complements well-known characteristics such as the dichotomy (or
Sacker-Sell) spectrum which
specifies the exponential divergence or convergence of trajectories.

The underlying difference equation is of the form
\begin{equation}\label{diffeq}
  u_{n+1} = A_n u_n,\quad A_n \in \R^{d,d},\quad n \in \N_0,
\end{equation}
where we assume the matrices $A_n$ to be  invertible, bounded, and to
have uniformly bounded inverses.  
By $\Phi$ we denote the solution operator of \eqref{diffeq},  defined
by $\Phi(n,m)=A_{n-1}\cdot\ldots\cdot A_m$ 
for $n>m$, $\Phi(n,n)=I$, and $\Phi(n,m)=A_n^{-1} \cdot \ldots \cdot
A_{m-1}^{-1}$ for $n<m$. 

In a series of papers \cite{BeFrHu20,BeHu22,BeHu23X} the authors (in
\cite{BeFrHu20} jointly with G. Froyland) introduced and analyzed so
called angular values 
of a given dimension $0<s \le d$. These values measure the maximal
longtime average of principal angles 
between successive $s$-dimensional subspaces generated by the
dynamical system \eqref{diffeq}. 
To be precise, for every $V$ in the Grassmann manifold $\cG(s,d)$ of
$s$-dimensional subspaces of $\R^d$, one forms 
the average
\begin{equation}\label{sec0:average}
  \alpha_n(V)=  \frac{1}{n} \sum_{j=1}^n\ang(\Phi(j-1,0)V,\Phi(j,0)V),
\end{equation}
where $\ang(\cdot,\cdot)$ denotes the maximal principal angle of
subspaces; see e.g.\ \cite[Ch.6.4]{GvL2013}. 
The maximal asymptotic value 
$ \theta_{s}^{\sup,\varlimsup} = \sup_{V \in
  \mathcal{G}(s,d)}\varlimsup_{n\to\infty} \alpha_n(V)$ is then called
the outer 
angular value of dimension $s$; see Definition \ref{defangularvalues}
and note 
 the variations of this notion which use $\inf$ and $\varliminf$
 instead of $\sup$ and $\varlimsup$. 
 We consider this value to measure the maximal rotational stress that
 an object of dimension $s$ experiences 
 under the evolution of \eqref{diffeq} when considered as a time map
 of a continuous flow; see \cite[Introduction]{BeFrHu20} 
 for a broader discussion of possible physical
 interpretations. Angular values are defined for all dimensions $s\le
 d$ 
 and agree even for $s=1$ only in special cases with classical
 rotation numbers \cite[Ch.11]{KH95}, 
 \cite[Ch.6.5]{A1998}; see \cite[Section 5.1]{BeFrHu20}, \cite[Section
 4.2]{BeHu23X} for a detailed comparison.

 The purpose of this article is to study not only the extreme values
 but all possible accumulation points of 
 the angular averages $\alpha_n(V)$ when $V$ varies over the
 Grassmannian. The set
 \begin{equation*}
   \Sigma_s:= \mathrm{cl} \{ \theta \in [0,\tfrac{\pi}{2}]: \exists V \in \cG(s,d): \varliminf_{n \to \infty} \alpha_n(V) \le \theta
   \le \varlimsup_{n \to \infty} \alpha_n(V)\}
   \end{equation*}
  is called the \emph{ outer angular 
 spectrum of dimension $s$} of the given system \eqref{diffeq}; see
 Definition \ref{def3:1}. 
 In our contribution we pursue two main goals:
 \begin{enumerate}
 \item[-] Derive explicit formulas for the outer angular spectrum in
   certain model cases, set up  
   a numerical method for computing finite time approximations for
   general systems, and investigate its convergence 
   as time goes to infinity.
 \item[-] Discuss the relation of the outer angular spectrum to outer
   angular values and analyze 
   the sensitivity of the outer angular spectrum to perturbations of
   the system matrices $A_n$ in 
   \eqref{diffeq}.
 \end{enumerate}
 In Section \ref{sec1.1} we show that the outer angular values are related to
 $\Sigma_s$ through (see Proposition \ref{prop2:relate})
 \begin{equation*} 
  \{\theta_s^{\inf,\varliminf},\theta_s^{\inf,\varlimsup},
\theta_s^{\sup,\varliminf}, \theta_s^{\sup,\varlimsup}\}\subseteq \Sigma_s \subseteq
[\theta_s^{\inf,\varliminf},\theta_s^{\sup,\varlimsup}]. 
 \end{equation*}
 Then we consider the dichotomy spectrum of \eqref{diffeq} and  
 apply the reduction theory from \cite{BeHu22}. According to this theory, it  suffices to replace the Grassmannian
 in the definition of $\Sigma_s$ by the considerably smaller set of trace spaces, i.e.\ by elements of $\cG(s,d)$ which
 have a basis composed of vectors from the spectral bundle of the dichotomy spectrum (Theorem \ref{thm2:reducespec}).

 In Section \ref{sec3} we consider perturbed systems of the form
 \begin{equation*}
   v_{n+1}=  (A_n+E_n) v_n, \quad n \in \N
 \end{equation*}
 and analyze which perturbations leave the outer angular spectrum invariant.
 A first result shows that this is true for kinematic transformations which become
 orthogonal at infinity (Proposition \ref{prop3:inv}). Our second and main
result Theorem \ref{thm3:maininv} 
 states that the outer angular spectrum stays 
 invariant if the given system has  a so-called complete exponential
 dichotomy (CED; see Definition \ref{def:CED}) and 
 if the perturbations $E_n$ are absolutely summable.  The CED is a
 rather strict concept which  requires all fundamental solutions to be decomposable
 into solutions which have an exact exponential rate in forward and backward time (Proposition \ref{prop3:specCED}).
 For an autonomous 
 system such a property holds if and only if all eigenvalues are
 semi-simple (Example  \ref{ex3:semisimple}).
 The proof of the main theorem involves several steps. First, it is shown how the CED relates to
 a generalized exponential dichotomy (GED); see Proposition \ref{lem3:CEDspec}. Then roughness theorems
 are derived for a GED (Theorem \ref{thm3:rough}) and for a CED (Theorem \ref{roughCED}), and finally  we prove that
a  summable perturbation of a system with a CED is kinematically similar to the unperturbed one (Theorem \ref{thm3:CED}). 
 Let us note that the invariance of the dichotomy spectrum has been
 proved under weaker conditions than a CED 
 in \cite{P2012, PR16}. However, we believe that the conditions of
 Theorem \ref{thm3:maininv} cannot be substantially 
 weakened; see Examples \ref{ex3:counter1}, \ref{ex6:Jordancounter}.

 In Section \ref{Sec_num} we propose and apply a numerical method to
 compute approximate angular spectra 
 via finite time approximations $\alpha_N(V)$ of the sums in
 \eqref{sec0:average}. The main step is to take advantage of the fact that
 it suffices to do the computations for trace spaces rather than for the full
 Grassmannian. For example, 
 if all bundles are one-dimensional it even
 suffices to consider finitely many subspaces. 
 This case occurs, for example, for the well-known Lorenz system
 (Section \ref{sec4:Lorenz}). 
 In Section \ref{sec4:FT} we prove the convergence of finite time
 angular spectra to their infinite counterpart 
 (Proposition \ref{prop4:Hausdorff}) under a uniform Cauchy
 condition. This seemingly strict  property holds 
 if the sequence $\ang(\Phi(j,0)V,\Phi(j-1,0)V)$ is uniformly almost
 periodic (Lemma \ref{estap})  which can 
 be verified for the linearization about a homoclinic orbit of
 Shilnikov type (Example \ref{ex3:henon3}). 

 In Section \ref{sec5} we conclude the paper  with a brief discussion
 of variants of the outer angular spectrum suggested by
 other types of angular values \cite{BeFrHu20}, such as inner or
 uniform outer angular values. It seems, however, 
 that the outer angular spectrum provides the best insight into the
 rotational dynamics, when compared to its variants.

Let us finally note that  we are able to explicitly compute the 
 outer angular spectrum for some specific cases; see Examples \ref{ex2:exauto},
 \ref{ex:normal2}, \ref{ex6:mixed}, \ref{ex3:AUapauto}, 
 \ref{ex6:Jordancounter}, \ref{E2}.  Our explicit  computations and the numerical results
 show that angular spectra may consist of isolated points 
 and/or of perfect intervals and can sometimes be computed by invoking
 Birkhoff's ergodic theorem. 
 Both types of spectrum may occur in the same example and depend
 sensitively on parameters; 
 see Figure \ref{explicit}. In particular, this suggests that angular
 spectra are generally at most 
 upper semicontinuous with respect to small bounded perturbations (similar to
 the dichotomy spectrum). 
 We have no proof of this behavior in general, but upper
 semicontinuity and the failure of lower semicontinuity 
 has been established for the critical two-dimensional
 normal form in Example \ref{ex:normal2}; cf.\ \cite[Section
 4.4]{BeHu23X}. 


\section{Basic definitions and properties}\label{sec1}
In this section we propose the new notion of an angular spectrum
that takes into account all possible rotations, occurring for a
nonautonomous dynamical system of the form 
\eqref{diffeq}. 
Further we study some examples and discuss
the relation between the angular spectrum and the angular values introduced
in \cite{BeFrHu20}, \cite{BeHu22}. Finally, we use the 
theory from \cite{BeHu22} to show that the computation of the
angular spectrum can be reduced to a small set of subspaces called
trace spaces.
To keep the article self-contained,
we also collect some relevant notions, definitions and results
from \cite{BeFrHu20,BeHu22,BeHu23X}. 

\subsection{Subspaces and principal angles} \label{sec1.0}
Let us begin with a useful characterization of the maximum principal
angle between two 
subspaces $V$ and $W$ of $\R^d$, both having the same dimension $s$.
The principal angles between $V$ and $W$ can be computed from the
singular values $ \sigma_1\ge \sigma_2 \ge \cdots \ge \sigma_s>0$ of
$V_B^\top W_B$, where the columns of $V_B,W_B\in \R^{d,s}$ 
form orthonormal bases of $V,W$, respectively; see
\cite[Ch.6.4.3]{GvL2013}, \cite[Prop.2.2]{BeHu23X}. 
The principle angles $\phi_j\in [0,\frac{\pi}{2}]$ are given by
$\sigma_j = \cos(\phi_j)$ 
and we denote its largest value by $\ang(V,W)=\phi_s=\arccos(\sigma_s)$. 
For the one-dimensional case we further use the notion 
\begin{align*}
  \ang(v,w) = \ang(\mathrm{span}(v),\mathrm{span}(w)), \quad v,w \in \R^d,
  v,w \neq 0.
\end{align*}
Note that this value is 
$\arccos(\frac{|v^{\top}w|}{\|v\|\|w\|})$ which ignores the sign of
$v^{\top}w$ and thus the orientation of the spanning
vectors $v$ and $w$.

An alternative characterization of
$\ang(V,W)$ is given in the following proposition; see
\cite[Prop.2.3]{BeFrHu20}. 

  \begin{proposition}\label{Lemma2}
  Let $V, W\subseteq \R^d$ be two $s$-dimensional subspaces. 
  Then the following relation holds
  \begin{equation*}\label{A1}
    \ang(V,W) =
    \max_{\substack{v\in V \\ v\neq 0} }\min_{\substack{w\in W \\ w \neq 0}} \ang(v,w)
  = \arccos\big(\min_{\substack{v\in V\\\|v\|=1}} \max_{\substack{
      w\in W\\\|w\|=1}} v^{\top} w\big).
  \end{equation*}
\end{proposition}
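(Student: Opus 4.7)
The plan is to reduce both equalities to the singular-value characterization of $\ang(V,W)$ already recalled in the text. Throughout I would work with orthonormal basis matrices $V_B, W_B \in \R^{d,s}$ of $V$ and $W$, so that by definition $\ang(V,W)=\arccos(\sigma_s)$ with $\sigma_s=\sigma_s(V_B^\top W_B)$ the smallest singular value.

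First I would handle the second equality, which is essentially algebra. Because $\ang(v,w)$ depends only on $\Span(v)$ and $\Span(w)$, I may restrict to unit vectors. For $v\in V$ with $\|v\|=1$ and $w\in W$ with $\|w\|=1$, $\ang(v,w)=\arccos(|v^\top w|)$. The key observation is that $W$ is a subspace, so $w\mapsto -w$ maps the unit sphere of $W$ to itself; hence $\max_{\|w\|=1,w\in W}|v^\top w|=\max_{\|w\|=1,w\in W}v^\top w$ and this value is nonnegative. Since $\arccos$ is monotonically decreasing on $[-1,1]$, one can then push the $\min_w$ inside $\arccos$ as a $\max_w$, and similarly push the outer $\max_v$ inside as a $\min_v$. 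This yields
\begin{equation*}
\max_{v\in V,\,v\neq 0}\min_{w\in W,\,w\neq 0}\ang(v,w)
=\arccos\bigl(\min_{v\in V,\|v\|=1}\max_{w\in W,\|w\|=1}v^\top w\bigr),
\end{equation*}
which is exactly the second stated identity.

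Next I would identify the quantity inside the $\arccos$ with $\sigma_s$. Parametrizing $v=V_B x$, $w=W_B y$ with $x,y\in\R^s$, $\|x\|=\|y\|=1$ (using that $V_B, W_B$ have orthonormal columns), the inner maximum becomes $\max_{\|y\|=1}x^\top(V_B^\top W_B)y=\|W_B^\top V_B x\|$ by the standard fact that $\max_{\|y\|=1}a^\top y=\|a\|$. Then
\begin{equation*}
\min_{\|x\|=1}\|W_B^\top V_B x\|=\sigma_{\min}(W_B^\top V_B)=\sigma_s,
\end{equation*}
where in the last step I use that singular values are invariant under transposition and that for an $s\times s$ matrix the minimum of $\|Mx\|$ over unit $x$ is the smallest singular value. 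Applying $\arccos$ to both sides gives $\ang(V,W)$, establishing the first identity.

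No genuine obstacle is expected; the only subtle point is the absolute-value step, where one must remember to exploit closure of $W$ under negation to drop $|\cdot|$. The rest is a clean substitution into the singular-value formula that defines $\ang(V,W)$.
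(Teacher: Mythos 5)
Your proof is correct: the max--min reformulation via $|v^\top w| = \max(v^\top w, (-v)^\top(-w))$, the monotonicity of $\arccos$ to interchange extrema, and the identification $\min_{\|x\|=1}\max_{\|y\|=1} x^\top V_B^\top W_B y = \sigma_{\min}(V_B^\top W_B) = \sigma_s$ via $\max_{\|y\|=1} a^\top y = \|a\|$ together give exactly the stated identities. The paper itself defers the proof to \cite[Prop.\ 2.3]{BeFrHu20}, and your argument is the standard singular-value/min--max derivation that matches that reference's approach.
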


We denote the Grassmannian by 
\begin{equation*} \label{eq1.4}
  \mathcal{G}(s,d) = \{ V \subseteq \R^d \; \text{is a subspace of dimension}
  \; s \}
\end{equation*} 
equipped with the metric
\begin{equation}\label{metric}
d(V,W) = \sin(\ang(V,W)),\quad V,W\in \cG(s,d).
\end{equation}
Note that 
\[
\tfrac 1 \pi \ang(V,W) \le d(V,W) \le \ang(V,W)\quad \forall\; V,W \in \cG(s,d).
\]
In fact, the angle $\ang(V,W)$ itself defines a metric on $\cG(s,d)$; see
\cite[Proposition 2.3]{BeHu23X} for a proof. We refer to \cite{BZA24} for a recent
overview of the geometry and computational aspects of Grassmann manifolds.
For example, the metric space $(\cG(s,d),\ang(\cdot,\cdot))$ is connected; see
\cite[Section 2.1]{BZA24}.

Consider a simple example which will be useful later on.
\begin{example} \label{ex2:s=2,d=3}
 Assume that  $V,W \in \cG(2,3)$ satisfy $V \cap W = \mathrm{span}(z)$
 for some  $z \neq 0$. 
 Then choose $v,w\in \R^3$ such that $V=\mathrm{span}(v,z)$, 
 $v^{\top}z=0$ and $W= \mathrm{span}(w,z)$, $w^{\top}z=0$; 
 see Figure \ref{winkel2} for an illustration.

\begin{figure}[hbt]
\begin{center}
\includegraphics[width=0.65\textwidth]{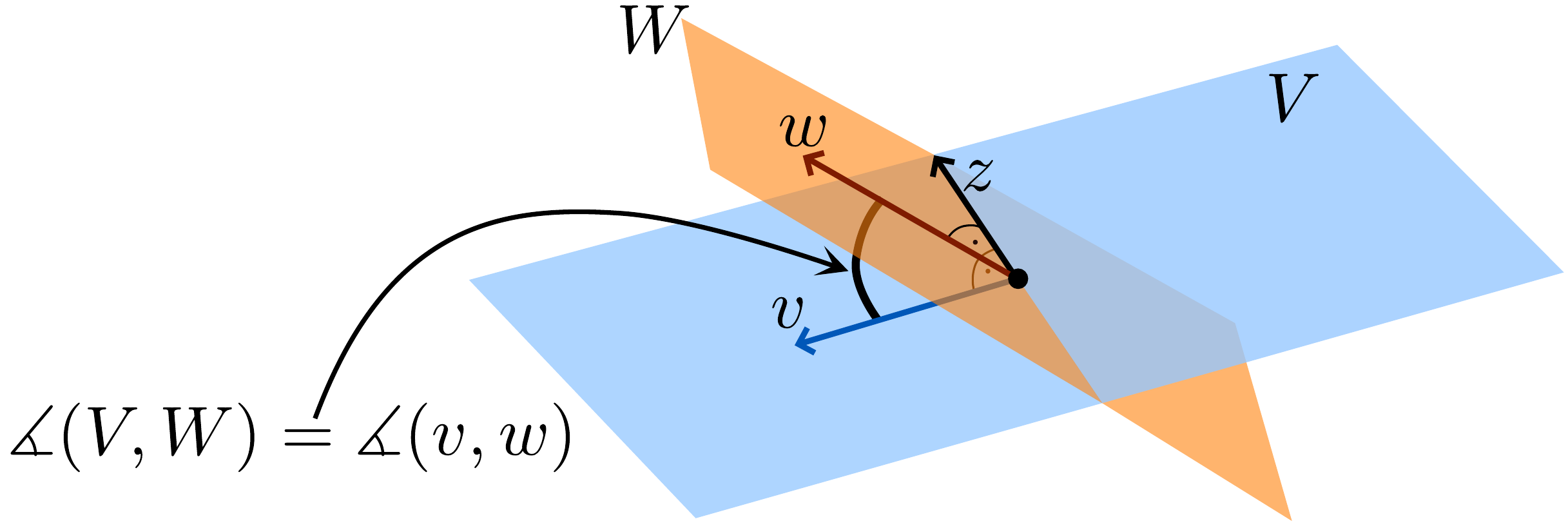}   
\end{center}
\caption{\label{winkel2} Computation of the angle between two planes  
  in $\R^3$.} 
\end{figure}

By normalizing we find the bases 
 \begin{align*}
   V_B&=\begin{pmatrix}\|v\|^{-1} v & \|z\|^{-1}z \end{pmatrix}, \quad
   W_B= \begin{pmatrix}\|w\|^{-1} w & \|z\|^{-1}z \end{pmatrix}
   \end{align*}
  and $V_B^{\top}W_B  = \mathrm{diag} (\frac{v^{\top}w}{\|v\| \|w\|}, 1)$.
 Hence, we obtain $\ang(V,W)=\arccos(\frac{|v^{\top}w|}{\|v\|\|w\|})= \ang(v,w)$. 
  \end{example}

Finally, we recall two estimates of principal angles. 
The first one from \cite[Lemma 2.8]{BeFrHu20} reads:
\begin{lemma}\label{app:Lest1}
Let $V,W\in\cG(s,d)$ and $S\in\mathrm{GL}(\R^d)$. Then we have
\begin{equation}\label{app:est1}
d(SV,SW) \le C d(V,W)\quad \text{with}\quad
C = \pi  \kappa(1+\kappa), \ \kappa = \|S^{-1}\|\|S\|.
\end{equation}
\end{lemma}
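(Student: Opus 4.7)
The plan is to reduce the bound to a clean geometric reformulation of $d$. By Proposition \ref{Lemma2}, for a unit vector $v \in V$ the inner maximum $\max_{\|w\|=1,\,w\in W} v^{\top}w$ equals the norm of the orthogonal projection of $v$ onto $W$; writing $\Pi_W$ for that orthogonal projector, the identity $\sin(\arccos(\|\Pi_W v\|)) = \|v - \Pi_W v\|$ (valid for unit $v$), combined with monotonicity of $\sin$ on $[0,\frac{\pi}{2}]$, yields the characterization
\[
d(V,W) = \sin(\ang(V,W)) = \max_{v\in V,\,\|v\|=1}\dist(v,W).
\]
I would establish this identity as a preliminary step before attacking the main inequality.

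The core argument is then a short chain. Fix a unit vector $u \in SV$ and write $u = S\tilde u$ with $\tilde u = S^{-1}u \in V$ (nonzero since $S$ is invertible), so that $\|\tilde u\| \le \|S^{-1}\|$. For any $w \in W$, submultiplicativity of the operator norm gives $\|u - Sw\| = \|S(\tilde u - w)\| \le \|S\|\,\|\tilde u - w\|$; taking the infimum over $w \in W$ yields $\dist(u,SW) \le \|S\|\,\dist(\tilde u, W)$. Applying the characterization above to the rescaled unit vector $\tilde u/\|\tilde u\|$ gives $\dist(\tilde u, W) \le \|\tilde u\|\, d(V,W) \le \|S^{-1}\|\, d(V,W)$. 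Chaining these inequalities and then maximizing over unit $u \in SV$ delivers
\[
d(SV,SW) \le \|S\|\,\|S^{-1}\|\, d(V,W) = \kappa\, d(V,W) \le C\, d(V,W),
\]
since trivially $\kappa \le \pi\kappa(1+\kappa)$.

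The main obstacle I anticipate is conceptual rather than technical: recognizing that Proposition \ref{Lemma2} directly encodes $d(V,W)$ as a one-sided supremum of point-to-subspace distances, so that the Lipschitz estimate decouples into an $\|S\|$-bound on forward images and an $\|S^{-1}\|$-bound on preimages of unit vectors. Once this observation is in place, the rest amounts to two applications of operator-norm submultiplicativity, and the stated constant $\pi\kappa(1+\kappa)$ is comfortably larger than what the argument actually produces — the bare constant $\kappa$ already suffices.
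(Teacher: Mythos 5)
Your argument is correct, and it in fact yields the sharper constant $\kappa = \|S\|\,\|S^{-1}\|$ in place of the paper's $C = \pi\kappa(1+\kappa)$. The two ingredients both check out: the preliminary identity
\[
d(V,W)=\sin\ang(V,W)=\max_{v\in V,\ \|v\|=1}\dist(v,W)
\]
follows from Proposition~\ref{Lemma2} via $\max_{w\in W,\|w\|=1}v^\top w=\|\Pi_W v\|$, Pythagoras $\|v-\Pi_W v\|^2=1-\|\Pi_W v\|^2$ for unit $v$, and the monotonicity of $\arccos$ and $\sin$; and the Lipschitz chain $\dist(u,SW)\le\|S\|\dist(\tilde u,W)\le\|S\|\,\|\tilde u\|\,d(V,W)\le\kappa\,d(V,W)$ uses only submultiplicativity and the positive homogeneity of $\dist(\cdot,W)$. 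A simple test with $S=\mathrm{diag}(1,M)$, $V=\Span(e_1)$, $W=\Span(e_1+\eps e_2)$ shows $d(SV,SW)/d(V,W)\to M=\kappa$ as $\eps\to 0$, so $\kappa$ is actually optimal.

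The paper itself does not prove this lemma; it cites it from \cite{BeFrHu20}, where the larger constant $\pi\kappa(1+\kappa)$ presumably arises from an argument passing through the angle $\ang$ rather than its sine and then converting between the two metrics via the bracketing $\tfrac1\pi\ang\le d\le\ang$. Your route — working directly with $d=\sin\ang$ and its characterization as a one-sided Hausdorff-type sup of point-to-subspace distances — is cleaner and tighter. Since $\kappa\ge 1$ (as $\kappa=\|S\|\,\|S^{-1}\|\ge\|SS^{-1}\|=1$) implies $\kappa\le\pi\kappa(1+\kappa)$, the stated inequality follows a fortiori.

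One small presentational note: it is worth stating explicitly that the identity $d(V,W)=\max_{v\in V,\|v\|=1}\dist(v,W)$ requires $\dim V=\dim W$ for symmetry of the two one-sided quantities (which holds here since both lie in $\cG(s,d)$), and that $S\in\mathrm{GL}(\R^d)$ guarantees $SV,SW\in\cG(s,d)$ so that $d(SV,SW)$ is well defined.
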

The second estimate compares the principal angle of subspaces when
only one space is mapped; see 
\cite[Lemma 3.3]{BeHu23X}.
\begin{lemma}\label{app:Lest2}
  For all $V,W \in \mathcal{G}(s,d)$ and $S \in \R^{d,d}$
  with $SV \in \mathcal{G}(s,d)$ one has
  \begin{equation} 
  \label{app:est2}
  |\ang(SV,W)- \ang(V,W)| \le C_{\pi} \|S-I_d\|, \quad
  C_{\pi}= \tfrac{\pi}{2}+\left(\tfrac{\pi^2}{4}+1\right)^{1/2}.
  \end{equation}
\end{lemma}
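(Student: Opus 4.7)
The plan is to reduce the two-subspace comparison to the single distance $\ang(V,SV)$ via the triangle inequality for the Grassmannian metric, and then to bound $\ang(V,SV)$ by a direct geometric estimate on $\ang(v,Sv)$ for unit $v\in V$.

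Since $\ang(\cdot,\cdot)$ is a metric on $\cG(s,d)$ (as noted just after \eqref{metric}), the triangle inequality yields
\[
|\ang(SV,W)-\ang(V,W)|\le \ang(V,SV),
\]
so it suffices to prove $\ang(V,SV)\le C_\pi\|S-I_d\|$. Write $E=S-I_d$. The hypothesis $SV\in\cG(s,d)$ ensures that $S|_V$ is injective, so $Sv\neq 0$ for every nonzero $v\in V$. By the max-min characterization in Proposition \ref{Lemma2}, using $Sv\in SV$ as a competitor,
\[
\ang(V,SV)\le \sup_{v\in V,\,\|v\|=1}\ang(v,Sv).
\]

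For unit $v\in V$, writing $Sv=v+Ev$ and computing the parallelogram area spanned by $v$ and $Sv$ gives
\[
\sin(\ang(v,Sv))=\frac{\|(I_d-vv^\top)Ev\|}{\|Sv\|}\le \frac{\|E\|}{\|Sv\|}.
\]
From this I would split into two regimes. If $\|E\|$ is large enough that $C_\pi\|E\|\ge\pi/2$, the bound is trivial because $\ang(v,Sv)\le\pi/2$. Otherwise $\|E\|<1$, so $\|Sv\|\ge 1-\|E\|>0$ and the sine bound, combined with the accompanying cosine bound $\cos(\ang(v,Sv))=|1+v^\top Ev|/\|Sv\|\ge(1-\|E\|)/\|Sv\|$, pins down $\ang(v,Sv)$ in terms of $\|E\|$ alone (effectively through $\arctan(\|E\|/(1-\|E\|))$).

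The main obstacle is producing the precise constant $C_\pi=\pi/2+\sqrt{\pi^2/4+1}$. The naive route using only $\sin(\ang(V,SV))\le \|E\|$ together with $\arcsin(t)\le(\pi/2)t$ on $[0,1]$ yields the weaker constant $\pi/2$; the Pythagorean form of $C_\pi$, with legs $\pi/2$ and $1$, suggests that the sharp argument solves a quadratic inequality of the type $(\alpha-(\pi/2)\|E\|)^2\le (\pi^2/4+1)\|E\|^2$ whose positive root is exactly $C_\pi\|E\|$, combining the perpendicular contribution $\|Ev-(v^\top Ev)v\|$ to $\sin(\ang(v,Sv))$ with a separate correction for the renormalisation $Sv\mapsto Sv/\|Sv\|$. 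Extracting this constant precisely is the technical core of Lemma 3.3 in \cite{BeHu23X}; the triangle-inequality reduction above is routine, but the final conversion from sine/cosine bounds to the stated angular bound requires the careful simultaneous tracking of the parallel and perpendicular components of $Ev$.
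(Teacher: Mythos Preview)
The paper does not prove this lemma; it merely records the statement in the appendix and cites \cite[Lemma~3.3]{BeHu23X} for the proof. There is therefore no in-paper argument to compare your proposal against. Your outline---the triangle inequality for the metric $\ang(\cdot,\cdot)$ to reduce to $\ang(V,SV)$, the max--min characterization of Proposition~\ref{Lemma2} to reduce to $\sup_{\|v\|=1}\ang(v,Sv)$, and the sine formula $\sin(\ang(v,Sv))=\|(I-vv^\top)Ev\|/\|Sv\|$---is a correct and natural reduction, and you yourself defer to the same external reference for the final extraction of the constant $C_\pi$. In that sense your proposal and the paper are aligned: both treat the result as imported.

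One small caution: your remark that ``the naive route using only $\sin(\ang(V,SV))\le\|E\|$'' would yield the smaller constant $\pi/2$ is slightly misleading as written, because the bound you actually derive is $\sin(\ang(v,Sv))\le\|E\|/\|Sv\|$, and the denominator $\|Sv\|$ can be strictly less than~$1$. Passing from this to a clean bound of the form $\ang\le C\|E\|$ is exactly where the work lies and where the larger constant $C_\pi$ enters; the cited reference handles this, as you note.
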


\subsection{The outer angular spectrum: definition and an elementary property}
\label{sec2:1}
For $n\in\N$ we abbreviate the average of principal angles between successive
subspaces by
\begin{equation} \label{def:alpha}
\alpha_n:
\begin{array}{rcl}
\cG(s,d) & \to & [0,\frac \pi 2]\\[1mm]
V & \mapsto &\displaystyle \frac 1n \sum_{j=1}^n\ang(\Phi(j-1,0)V,\Phi(j,0)V).
\end{array}
\end{equation}

\begin{definition} \label{def3:1}
  For $s\in\{1,\dots,d\}$ the \textbf{outer angular spectrum} of dimension
  $s$ is defined by  
$$
\Sigma_s \coloneqq \cl\big\{\theta \in [0,\tfrac \pi 2]: \exists V\in\cG(s,d): 
\varliminf_{n\to \infty} \alpha_n(V) \le \theta \le \varlimsup_{n\to \infty}
\alpha_n(V)\big\}.
$$
\end{definition}
When we consider different systems we will write $\Sigma_s(\Phi)$ to
indicate the dependence of the 
angular spectrum on the solution operator.
Since we take the closure the corresponding resolvent set
$[0,\frac{\pi}{2}]\setminus \Sigma_s$ is relatively open. 
Further, each value  $\theta\in \Sigma_s$ is an accumulation point of
$(\alpha_n(V))_{n\in\N}$ for some $V\in\cG(s,d)$.

\begin{lemma}\label{accu}
Let $V\in\cG(s,d)$. Then each 
$$
\theta\in\big[\varliminf_{n\to\infty}\alpha_n(V),\varlimsup_{n\to\infty}\alpha_n(V)\big]
$$
is an accumulation point of $(\alpha_n(V))_{n\in\N}$.
\end{lemma}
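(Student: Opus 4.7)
The plan is to exploit the fact that $(\alpha_n(V))_{n\in\N}$ is a sequence of Cesàro-type averages, so consecutive terms cannot differ by much. Once I establish $|\alpha_{n+1}(V)-\alpha_n(V)|\to 0$, the claim reduces to the standard fact that a real sequence with vanishing step size realizes every value between its $\varliminf$ and $\varlimsup$ as an accumulation point (a discrete intermediate value argument).

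First I would bound the one-step increment. Writing $S_n = \sum_{j=1}^n\ang(\Phi(j-1,0)V,\Phi(j,0)V)$ so that $\alpha_n(V) = S_n/n$, a direct computation gives
\[
\alpha_{n+1}(V)-\alpha_n(V)=\frac{\ang(\Phi(n,0)V,\Phi(n+1,0)V)}{n+1}-\frac{S_n}{n(n+1)}.
\]
Since each principal angle lies in $[0,\tfrac{\pi}{2}]$, both terms are bounded in absolute value by $\tfrac{\pi}{2(n+1)}$, hence $|\alpha_{n+1}(V)-\alpha_n(V)|\le \tfrac{\pi}{n+1}\to 0$.

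Next, fix $\theta\in[\varliminf_{n\to\infty}\alpha_n(V),\varlimsup_{n\to\infty}\alpha_n(V)]$ and $\eps>0$. Choose $N$ large enough that $|\alpha_{n+1}(V)-\alpha_n(V)|<\eps$ for all $n\ge N$. If $\theta=\varliminf$ or $\theta=\varlimsup$, the conclusion is immediate from the definition of these limits. Otherwise, since $\varliminf<\theta<\varlimsup$, there exist arbitrarily large indices $n$ with $\alpha_n(V)<\theta$ and arbitrarily large indices with $\alpha_n(V)>\theta$. Pick any such pair $N\le n_1<n_2$ with $\alpha_{n_1}(V)<\theta$ and $\alpha_{n_2}(V)>\theta$, and let $j^\star = \min\{j\in\{n_1,\dots,n_2\}:\alpha_j(V)\ge \theta\}$. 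Then $j^\star>n_1$, and
\[
\theta \le \alpha_{j^\star}(V) = \alpha_{j^\star-1}(V) + \bigl(\alpha_{j^\star}(V)-\alpha_{j^\star-1}(V)\bigr) < \theta + \eps,
\]
so $\alpha_{j^\star}(V)\in[\theta,\theta+\eps)$. Since this construction can be performed beyond any prescribed index, $\theta$ is an accumulation point of $(\alpha_n(V))_{n\in\N}$.

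The only real work is the one-step estimate; after that the argument is the standard discrete intermediate value principle, so I anticipate no substantial obstacle.
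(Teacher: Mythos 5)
Your proof is correct and follows essentially the same route as the paper: both establish the one-step bound $|\alpha_{n+1}(V)-\alpha_n(V)|\le \pi/(n+1)$ and then pass to the conclusion via the standard fact that a real sequence whose consecutive differences tend to zero has every value in $[\varliminf,\varlimsup]$ as an accumulation point. The only difference is that you spell out the discrete intermediate-value argument that the paper's proof leaves implicit with a ``Therefore''.
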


\begin{proof}
With $a_j = \ang(\Phi(j-1,0)V,\Phi(j,0)V)\in[0,\frac \pi 2]$ we
observe for $n\in\N$ that 
\begin{align*}
|\alpha_n(V)-\alpha_{n+1}(V)|
&=\left|\frac 1n \sum_{j=1}^n a_j - \frac 1{n+1}
                                \sum_{j=1}^{n+1}a_j\right|
= \left|\frac 1{n^2+n} \sum_{j=1}^n a_j - \frac 1{n+1}
                                a_{n+1}\right|\\
&\le \frac 1{n^2+n}\cdot n\cdot \frac \pi 2 + \frac 1 {n+1}\cdot \frac \pi 2 
= \frac \pi{n+1},
\end{align*}
hence $\lim_{n\to \infty} \alpha_n(V) - \alpha_{n+1}(V) = 0$.
Therefore, each value between $\varliminf_{n\to \infty}\alpha_n(V)$ and 
$\varlimsup_{n\to \infty}\alpha_n(V)$ is approached by a convergent subsequence of $\alpha_n(V)$.
\end{proof}

\subsection{Relation to angular values}
\label{sec1.1}
Several different types of angular values have been proposed in
\cite{BeFrHu20}. These values have in common that the supremum over
$ V \in \cG(s,d)$ is taken. For the relation to the  angular spectrum
defined above, it is useful to consider 
also the infimum w.r.t.\  $V \in \mathcal{G}(s,d)$. We
introduce these notions but restrict the presentation to the so called
outer angular values (see \cite[Section 3.1]{BeFrHu20} for inner and uniform
versions). 
\begin{definition} \label{defangularvalues}
  Let the  nonautonomous system \eqref{diffeq} and $s\in
  \{1,\ldots,d\}$ be given.
    The {\bf outer angular values} of dimension $s$ are defined by

\begin{equation*}\label{doutersup}
\begin{aligned}
  \theta_{s}^{\sup,\varlimsup} = \sup_{V \in \mathcal{G}(s,d)}
  \varlimsup_{n\to\infty} \alpha_n(V), \quad
   \theta_s^{\sup,\varliminf} =\sup_{V \in \mathcal{G}(s,d)}
  \varliminf_{n\to\infty}  \alpha_n(V),\\
  \theta_{s}^{\inf,\varlimsup} = \inf_{V \in \mathcal{G}(s,d)}
  \varlimsup_{n\to\infty}   \alpha_n(V), \quad
   \theta_s^{\inf,\varliminf} =\inf_{V \in \mathcal{G}(s,d)}
  \varliminf_{n\to\infty}  \alpha_n(V).
\end{aligned}
\end{equation*}
\end{definition}

The following relations hold for all $s=1,\ldots,d$
\begin{equation}\label{outercon}
\begin{matrix}
\theta_s^{\inf,\varliminf} & \le &  \theta_s^{\inf,\varlimsup}\\
\rle && \rle\\
\theta_s^{\sup,\varliminf} & \le &  \theta_s^{\sup,\varlimsup}.
\end{matrix}  
\end{equation}
For the $\sup_V$-values we refer to \cite[Lemm 3.3]{BeFrHu20} while
the other relations are rather obvious.
Note that these values are generally not identical; see \cite[Section
3.2]{BeFrHu20}. In particular, for the example 
in \cite[3.10]{BeFrHu20} the $\sup_{V\in \cG(1,2)}$
and $\inf_{V\in\cG(1,2)}$ coincide and the diagram reads 
\[
\begin{matrix}
\theta_1^{\inf,\varliminf} & < &  \theta_1^{\inf,\varlimsup}\\
\rg && \rg\\
\theta_1^{\sup,\varliminf} & < &  \theta_1^{\sup,\varlimsup}.
\end{matrix} 
\]

The following proposition is a consequence of the definitions.
\begin{proposition} \label{prop2:relate}
The outer angular spectrum of dimension $s$ satisfies
\begin{equation} \label{eq2:specbound}
  \{\theta_s^{\inf,\varliminf},\theta_s^{\inf,\varlimsup},
\theta_s^{\sup,\varliminf}, \theta_s^{\sup,\varlimsup}\}\subseteq
\Sigma_s \subseteq
[\theta_s^{\inf,\varliminf},\theta_s^{\sup,\varlimsup}]. 
\end{equation}
\end{proposition}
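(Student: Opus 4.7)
The claim breaks into two inclusions, and both are essentially direct consequences of how suprema, infima and closures interact with the pre-closure set
\[
S = \bigl\{\theta \in [0,\tfrac{\pi}{2}] : \exists V\in\cG(s,d),\; \varliminf_{n\to\infty}\alpha_n(V)\le \theta\le \varlimsup_{n\to\infty}\alpha_n(V)\bigr\}.
\]
My plan is to establish the right-hand inclusion first because it fixes the ambient interval, and then treat the four extremal values one by one.

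For the inclusion $\Sigma_s\subseteq[\theta_s^{\inf,\varliminf},\theta_s^{\sup,\varlimsup}]$, I would pick an arbitrary $\theta\in S$ and the corresponding $V$. By definition of the extremal values, $\varliminf_n \alpha_n(V)\ge \theta_s^{\inf,\varliminf}$ and $\varlimsup_n \alpha_n(V)\le \theta_s^{\sup,\varlimsup}$, so $\theta$ lies in the closed interval on the right. Since this interval is closed in $[0,\frac{\pi}{2}]$, it also contains the closure $\Sigma_s=\cl S$.

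For the inclusion $\{\theta_s^{\inf,\varliminf},\theta_s^{\inf,\varlimsup},\theta_s^{\sup,\varliminf},\theta_s^{\sup,\varlimsup}\}\subseteq \Sigma_s$, I would handle all four values in the same way. Take $\theta_s^{\sup,\varlimsup}$ as a representative case: by definition of the supremum, there is a sequence $V_k\in\cG(s,d)$ with $\varlimsup_n\alpha_n(V_k)\to \theta_s^{\sup,\varlimsup}$. For each $k$, the choice $\theta=\varlimsup_n\alpha_n(V_k)$ trivially satisfies the sandwich $\varliminf\le\theta\le\varlimsup$, so $\varlimsup_n\alpha_n(V_k)\in S$. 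Passing to the limit in $k$ then yields $\theta_s^{\sup,\varlimsup}\in\cl S=\Sigma_s$. The argument for $\theta_s^{\sup,\varliminf}$ is identical after replacing $\varlimsup$ by $\varliminf$ inside the approximating sequence, and the two $\inf$-values are obtained by replacing the supremum over $V$ by an infimum.

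There is no real obstacle here; the only point that needs a quick check is that the approximating values $\varliminf_n\alpha_n(V_k)$ or $\varlimsup_n\alpha_n(V_k)$ do lie in $S$ rather than merely close to it, which is immediate because $\theta$ is allowed to equal either endpoint of the sandwich. The closure in the definition of $\Sigma_s$ is exactly what makes the four suprema/infima land inside the spectrum even when they are not attained by any individual $V$.
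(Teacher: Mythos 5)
Your proof is correct and follows essentially the same route as the paper: for the left inclusion both arguments use that the approximating values $\varliminf_n\alpha_n(V)$ resp.\ $\varlimsup_n\alpha_n(V)$ already lie in the pre-closure set $S$ and that $\Sigma_s$ is closed, and for the right inclusion both note it is immediate from the definitions.
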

\begin{proof} For any $\varepsilon >0$ there exists $V \in \cG(s,d)$ such
  that $\theta_s^{\inf,\varliminf} \le \varliminf_{n\to\infty}   \alpha_n(V)
  \le \theta_s^{\inf,\varliminf}+ \varepsilon$. Since
  $\varliminf_{n\to\infty}   \alpha_n(V)\in \Sigma_s$ holds by
  definition and $\Sigma_s$ is closed we 
  obtain $\theta_s^{\inf,\varliminf}\in \Sigma_s$. A similar argument applies
  to the other angular values. The second inclusion in \eqref{eq2:specbound}
  is immediate from the definitions.
\end{proof}
Of course, in general there is more structure to the spectrum inside the
bounding interval as the following example shows.
\begin{example} \label{ex2:exauto}
Consider \eqref{diffeq} for the $3$-dimensional autonomous case 
\begin{equation}\label{mot3d}
A_n = A =
\begin{pmatrix} T_{\varphi} & 0 \\ 0 & 2 \end{pmatrix}, \quad
T_{\varphi}=\begin{pmatrix} \cos(\varphi) & -\sin(\varphi) \\
\sin(\varphi) & \cos(\varphi) \end{pmatrix}, \quad
n \in \N_0,\
0 < \varphi \le \frac \pi 2.
\end{equation}
For  $V=\mathrm{span}(v)$, $v=(z,v_3)^{\top}\neq 0$, $z \in \R^2$ we obtain
\begin{align*}
  \lim_{n \to \infty}\alpha_n(V)=\begin{cases}
  0, & \text{if} \quad v_3 \neq 0, \\
  \varphi, & \text{if} \quad v_3=0.
  \end{cases}
\end{align*}
This follows from
\begin{align*}
  A^jv&= \begin{pmatrix} T_{j\varphi}z \\ 0
  \end{pmatrix}, \quad \ang(A^{j-1}v,A^jv)=\varphi \quad \text{if} \quad v_3=0,\\
  \ang(A^{j-1}v,A^jv)&=\ang\left(
  \begin{pmatrix} T_{(j-1)\varphi}z \\  2^{j-1}v_3\end{pmatrix},
    \begin{pmatrix} T_{j\varphi}z \\  2^jv_3\end{pmatrix}\right)\\
     & =\ang \left(
  \begin{pmatrix}2^{1-j}v_3^{-1} T_{(j-1)\varphi}z \\  1\end{pmatrix},
    \begin{pmatrix}2^{-j}v_3^{-1} T_{j\varphi}z \\  1 \end{pmatrix}\right)\to 0
    \; \text{as} \; j \to \infty,\; \text{if}\; v_3 \neq 0.
\end{align*}
For a precise estimate in the last case see \cite[Section 3.1]{BeFrHu20}.
Therefore, the inclusion \eqref{eq2:specbound} for this example reads
\begin{align*}
  \{\theta_1^{\inf,\varliminf}, \theta_1^{\sup,\varlimsup}\}= \{0,\varphi\}= \Sigma_1
  \subsetneq [0,\varphi]=[\theta_1^{\inf,\varliminf},\theta_1^{\sup,\varlimsup}].
  \end{align*}
\end{example}
A more intriguing example is the following autonomous system
\eqref{diffeq} for the orthogonal normal form of a $2 \times 2$-matrix 
with complex conjugate eigenvalues;  see \cite[Sections 5.1,6.1]{BeFrHu20}.
\begin{example}\label{ex:normal2}
  \begin{align}\label{rhomatrix}
     A(\rho,\varphi) =\begin{pmatrix} \cos(\varphi) & - \rho^{-1} \sin(\varphi)\\
    \rho \sin(\varphi) & \cos(\varphi) \end{pmatrix}, \quad
     0<\rho \le 1, \quad 0 <  \varphi \le \frac \pi 2.
  \end{align}
  In \cite[Proposition 5.2]{BeFrHu20} we showed that all angular
  values agree in this case, and in 
  \cite[Theorem 6.1]{BeFrHu20} we determined an explicit formula for
  $\theta_1^{\sup,\lim}(A(\rho,\varphi))$. It 
  depends on the skewness parameter
  $\mathrm{sk}(\rho,\varphi)=\frac{1}{2}(\rho+
  \rho^{-1})\sin(\varphi)$ 
  and on  $\frac{\varphi}{\pi}$ being rational or irrational. The
  proof of \cite[Theorem 6.1]{BeFrHu20} 
  does not only provide the maximal angular value
  $\theta_1^{\sup,\lim}(A(\rho,\varphi))$ but also 
  the minimal one $\theta_1^{\inf,\lim}(A(\rho,\varphi))$ and thus the
  angular spectrum 
  $\Sigma_1(A(\rho,\varphi))$. Therefore,
  we don't repeat the computation here. The result is the following:
    \begin{equation} \label{finalform}
    \Sigma_1(A(\rho,\varphi)) =
    \begin{cases} \begin{array}{ll}
     \{ \varphi\}, &
      \mathrm{sk}(\rho,\varphi)\le 1,
         \\ \displaystyle
     \Big\{ \varphi+ \frac{1}{\pi}
                    \int_{\{\delta_{\rho,\varphi}<0\}}\delta_{\rho,\varphi}(\theta) 
      \mathrm{d}\theta \Big\}, &
        \mathrm{sk}(\rho,\varphi)>1, \frac{\varphi}{\pi} \notin \Q, \\
        \left[\min_{0 \le \theta \le \frac{\pi}{2}}G_q(\theta),\max_{0
                    \le \theta \le \frac{\pi}{2}}G_q(\theta)\right], 
      &  \mathrm{sk}(\rho,\varphi)>1, \frac{\varphi}{\pi}=\frac{p}{q},
        q \perp p. 
    \end{array} 
    \end{cases}
  \end{equation}
  Here the functions $\delta_{\rho,\varphi},G_q:[0,\pi] \to \R$ and
  $\Psi_{\rho}: \R \to  \R$ are defined as follows: 
  \begin{equation} \label{defdeltag}
  \begin{aligned}
       \Psi_{\rho}(\theta) &=
        \begin{cases} \arctan(\rho \tan(\theta)), &
            |\theta| <  \frac{\pi}{2}, \\ \theta, & |\theta|= \frac{\pi}{2},
        \end{cases}\\
         \Psi_{\rho}(\theta + n\pi) & = \Psi_{\rho}(\theta)+ n \pi, \text{ for }
    |\theta| \le \frac{\pi}{2},n  \in \Z \setminus \{0\}, \\
     \end{aligned}
  \end{equation}
  \begin{equation*} 
  \begin{aligned}
    \delta_{\rho,\varphi}(\theta)&= 2 \Psi_{\rho}(\theta)-2
    \Psi_{\rho}(\theta+ \varphi) 
    + \pi,   \\
       G_q(\theta)& = \frac{1}{q}\sum_{j=1}^q  \min( \theta_{j}- \theta_{j-1},
     \theta_{j-1} +\pi-\theta_{j} ), \quad
     \theta_{j-1} = \Psi_{\rho}((j-1) \varphi + \Psi_{\rho^{-1}}(\theta)).
  \end{aligned}
  \end{equation*}
   If $\frac{\pi}{2} < \varphi < \pi$ then $\Sigma_1(A(\rho,\varphi)) =
  \Sigma_1(A(\rho,\pi-\varphi))$.

\begin{figure}[hbt]
\begin{center}
\includegraphics[width=0.99\textwidth]{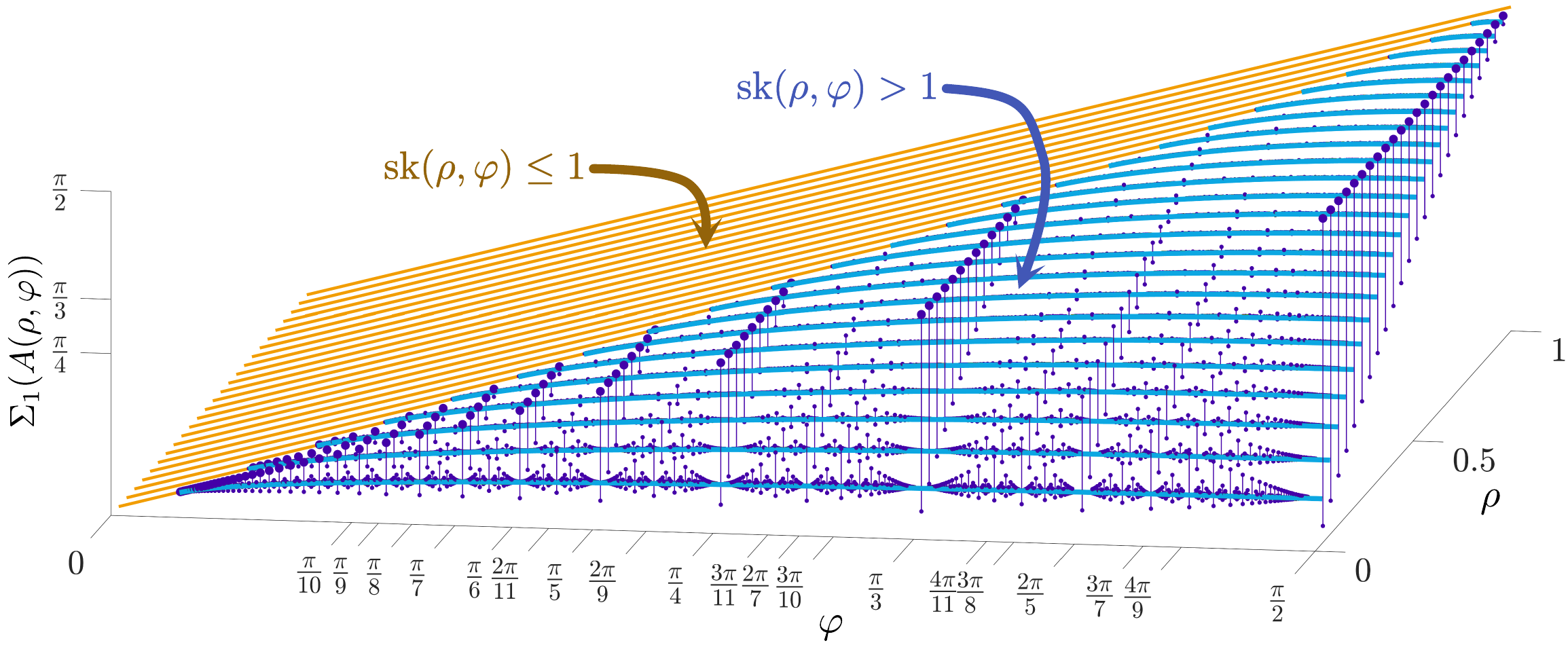}   
\end{center}
\caption{\label{explicit}Outer angular spectrum of the autonomous system
  $u_{n+1}=A(\rho,\varphi)u_n$ for $A(\rho,\varphi)$ from
  \eqref{rhomatrix} as a function of 
  the parameters  $\rho \in (0,1]$, $\varphi\in (0,\frac{\pi}{2}]$.} 
\end{figure}

Figure \ref{explicit} illustrates the rather irregular behavior of the
outer angular spectrum with respect to the parameters $\rho$ and
$\varphi$. In the hatched region $\mathrm{sk}(\rho,\varphi)\le 1$ we
always have point spectrum $\{\varphi\}$. One can show that in this
region no turnover occurs, i.e.\ the angle between the subspaces
$\mathrm{span}(v)$ and $\mathrm{span}(A(\rho,\varphi)v)$ always
coincides with the angle between the spanning vectors.  This changes
in the region $\mathrm{sk}(\rho,\varphi)>1$. Then we find point
spectrum at $\varphi$-values not in $\pi \Q$ while we have perfect
intervals for $\varphi$-values in $\pi \Q$. In the latter case the
left and right endpoint of the spectral interval are indicated in
Figure \ref{explicit} by a dot. Let us further note that for values
$\varphi\notin \pi \Q$ and $\mathrm{sk}(\rho,\varphi)>1$ the function
$\delta_{\rho,\varphi}$ becomes negative in some open subinterval of
$(0,\frac{\pi}{2})$, so that $\Sigma_1(A(\rho,\varphi))$ is strictly
below $\varphi$ (cf.\ \cite[(6.8)]{BeFrHu20}).  On the other hand, by
\cite[Theorem 6.1]{BeFrHu20} we have
$\max_{0 \le \theta \le \frac{\pi}{2}}G_q(\theta)=\varphi$ if
$\mathrm{sk}(\rho,\varphi)>1$ and $\varphi= \frac{\pi}{q}$ for some
$q \in \N, q\ge 2$, i.e.\ the maximum angular value is achieved for a
proper one-dimensional subspace.  Finally, we showed in \cite[Section
4.4]{BeHu23X}, \cite[Section 6.1]{BeFrHu20} that the maximum value
$\theta_1^{\sup,\lim}(A(\rho,\varphi))$ is upper but not lower
semicontinuous w.r.t.\ the parameter $\varphi$. Similarly, one finds
that $\theta_1^{\inf,\lim}(A(\rho,\varphi))$ is lower but not upper
semicontinuous w.r.t.\ $\varphi$. In view of the formula
\eqref{finalform} this shows that the angular spectrum is upper but
not lower semicontinuous w.r.t.\ $\varphi$ in the Hausdorff sense.
\end{example}
 
\subsection{Trace spaces and the reduction theorem for the angular
  spectrum}\label{SackerSell} 
In \cite[Section 3.1]{BeHu22} we introduced so called trace spaces which
are associated with the Grassmannian $\cG(s,d)$ and the system \eqref{diffeq}.
These are subspaces of dimension $s$ with a basis consisting of vectors from
the spectral bundles of the dichotomy spectrum; see Definition \ref{deftrace}
below. Trace spaces turn out to be the natural nonautonomous generalization of
invariant subspaces for autonomous systems.
They allow an efficient computation of outer angular values, by 
reducing the suprema and infima over the whole Grassmannian to the set of trace
spaces, see the reduction Theorem \ref{thm2:reduce} below.

The dichotomy spectrum, see \cite{ss78} is based on the
notion of an exponential dichotomy, cf.\ \cite{he81, ak01, k94, co78,
  dk74, p30}. We define this notion for
linear systems \eqref{diffeq} on a discrete time interval $J=\{n \in
\Z: n \ge n_-\}$ unbounded from above. For our purposes, 
it is convenient to use a generalized version which
allows an arbitrary split of the rates of solutions, not necessarily
into growing and decaying ones.

\begin{definition}\label{edDef}
The system \eqref{diffeq} has a  \textbf{generalized exponential
  dichotomy} (\textbf{GED}) on $J$ with rates
$0 \le \lambda_+ < \lambda_-\le \infty$ (\,
$(\lambda_+,\lambda_-)=(0,\infty)$ excluded), if there exist a
constant $K>0$ and families of 
projectors $P_n^+$, $P_n^- \coloneqq I-P_n^+$, $n\in J$ with the properties:
\begin{itemize}
\item [(i)] $P_{n}^{\pm} \Phi(n,m) = \Phi(n,m)P_m^{\pm}$ for all
  $n,m\in J$,
\item[(ii)] the following estimates hold for all $n,m\in J$
  \begin{align} 
    \|\Phi(n,m) P_m^+\| &\le K\lambda_+^{n-m},\quad n \ge m,\label{eq2:dich+} \\ 
\|\Phi(n,m) P_m^-\| &\le K \lambda_-^{n-m}, \quad n \le m. \label{eq2:dich-}
  \end{align}
\end{itemize}
The tuple $(K,\lambda_{\pm}, P_n^{\pm})_{n \in J}$ is called the
dichotomy data of \eqref{diffeq} resp.\ of $\Phi$.  
\end{definition}
\begin{remark}\label{rem2:extreme}
  If $\lambda_+=0, \lambda_-<\infty$ the estimate \eqref{eq2:dich+} is
  to be read with 
  $0^k=0$ for $k \ge 0$, so that $P_m^+=0$, $P_m^-=I$ follows and
  \eqref{eq2:dich-} is an upper bound for the inverse solution
  operator. Similarly, if 
  $0< \lambda_+,\lambda_-=\infty$ the estimate \eqref{eq2:dich-} is
  to be read with $\infty^{k}=0$ for $k \le 0$, so that $P_m^-=0$, $P_m^+=I$
follows and \eqref{eq2:dich+} is an upper bound for the solution operator.
\end{remark}
Recall that the system \eqref{diffeq} has a (standard) exponential dichotomy (ED)
(see e.g.\ \cite{as01,P2012,BeHu22}), if there exist constants
$\alpha_s,\alpha_u <1$ and projectors $P_n^s,P_n^u=I-P_n^s$  which
satisfy condition (i) of 
Definition \ref{edDef} and 
\begin{align*}
  \|\Phi(n,m)P_m^s\| \le K \alpha_s^{n-m}, \quad \|\Phi(m,n)P_n^u\|\le
  K \alpha_u^{n-m}, \quad \forall \, n\ge m\in J. 
\end{align*}
The relation between a GED and an ED is easily established via the scaled equation
\begin{equation}\label{scale}
u_{n+1} = \frac{1}{\lambda} A_n u_n,\quad n\in J, \quad \lambda>0
\end{equation}
which has the solution operator $\Phi_{\lambda}(n,m) = \lambda^{m-n} \Phi(n,m)$.
\begin{lemma} \label{relateEDGED}
  If $\Phi$ has a GED with data $(K,\lambda_{\pm}, P_n^{\pm})_{n \in J}$
  then $\Phi_{\lambda}$ has  an ED with data
  $( K,  P_n^s=P_n^+,P_n^u=P_n^-,  \alpha_s=\frac{\lambda_+}{\lambda},
       \alpha_u= \frac{\lambda}{\lambda_-})_{n \in J}$ for each
       $\lambda\in (\lambda_+,\lambda_-)$.\\ 
   Conversely, if $\Phi_{\lambda}$ has an ED with data $(K,
   \alpha_{s,u}, P_n^{s,u})_{n \in J}$ 
  then $\Phi$ has a GED with data $(K,\lambda_+=\alpha_s \lambda,
  \lambda_-=\frac{\lambda}{\alpha_u},P_n^+=P_n^s, P_n^- = P_n^u)_{n \in J}$.
\end{lemma}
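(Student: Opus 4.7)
The proof plan is essentially a direct computation using the scaling identity $\Phi_{\lambda}(n,m) = \lambda^{m-n}\Phi(n,m)$ and the fact that the projectors $P_n^{\pm}$ transfer unchanged between the two settings. Since multiplication by the scalar $\lambda^{m-n}$ commutes with everything, the invariance condition (i) of Definition \ref{edDef} translates immediately: $P_n^{\pm}\Phi_\lambda(n,m) = \lambda^{m-n} P_n^{\pm}\Phi(n,m) = \lambda^{m-n}\Phi(n,m)P_m^{\pm} = \Phi_\lambda(n,m)P_m^{\pm}$, so I only need to track the norm estimates.

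For the first direction, I would fix $\lambda\in(\lambda_+,\lambda_-)$ and set $P_n^s=P_n^+$, $P_n^u=P_n^-$. For the stable estimate I use \eqref{eq2:dich+}: for $n\ge m$,
\[
\|\Phi_\lambda(n,m)P_m^s\| = \lambda^{m-n}\|\Phi(n,m)P_m^+\| \le K\left(\tfrac{\lambda_+}{\lambda}\right)^{n-m} = K\alpha_s^{n-m},
\]
and $\alpha_s<1$ by the choice of $\lambda$. For the unstable estimate I apply \eqref{eq2:dich-} with the time indices swapped: for $n\ge m$,
\[
\|\Phi_\lambda(m,n)P_n^u\| = \lambda^{n-m}\|\Phi(m,n)P_n^-\| \le K\left(\tfrac{\lambda}{\lambda_-}\right)^{n-m} = K\alpha_u^{n-m},
\]
with $\alpha_u<1$ for the same reason.

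The converse direction is the same computation run backwards: starting from the ED data for $\Phi_\lambda$, I write $\Phi(n,m) = \lambda^{n-m}\Phi_\lambda(n,m)$, substitute into the ED bounds, and collect the resulting exponents into the rates $\lambda_+ = \alpha_s\lambda$ and $\lambda_- = \lambda/\alpha_u$. One verifies $\lambda_+<\lambda_-$ from $\alpha_s\alpha_u<1$, and the constant $K$ is unchanged.

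The only subtlety, and what I regard as the main (though still minor) obstacle, is the degenerate cases flagged in Remark \ref{rem2:extreme}: the boundary choices $\lambda_+=0$ (forcing $P_n^+=0$, so the first estimate is vacuous and $\Phi_\lambda$ is governed purely by the inverse) and $\lambda_-=\infty$ (symmetric). In these cases one needs to read the exponent conventions $0^k=0$ for $k\ge 0$ and $\infty^k=0$ for $k\le 0$ consistently through the scaling, and to verify that any $\lambda>0$ (respectively $\lambda<\infty$) in the open interval $(\lambda_+,\lambda_-)$ still yields a well-defined ED. This amounts to checking two short cases alongside the generic argument, after which the lemma is proved.
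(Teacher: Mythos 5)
Your proof is correct; the paper states Lemma \ref{relateEDGED} without proof, treating it as a routine consequence of the scaling identity $\Phi_\lambda(n,m)=\lambda^{m-n}\Phi(n,m)$, and your direct verification of the invariance condition and the two norm estimates (reindexing \eqref{eq2:dich-} to bring it into the ED form for $P_n^u$, and running the computation backwards for the converse) is exactly what is intended. Your flagging of the degenerate boundary cases $\lambda_+=0$ and $\lambda_-=\infty$, handled via the conventions of Remark \ref{rem2:extreme}, is an appropriate refinement and closes the only genuine loose end.
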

For the further development of the theory (in particular Section
\ref{sec3}) it is suitable to use the 
GED setting throughout and avoid  working with the scaled equation
\eqref{scale}. 
As an example, we state and prove two properties of GEDs  which are
well-known for EDs. 
\begin{lemma} \label{lem3:GEDprop}
The  ranges of projectors  of a GED are uniquely determined by
\begin{equation} \label{eq2:rangechar}
  \range(P_m^+)= \{x\in \R^d: \exists C>0: \|\Phi(n,m)x\|\le C
  \lambda_+^{n-m}\|x\| \quad  \forall \; n \ge m \in J \}. 
\end{equation}
If $(\tilde{K},\lambda_{\pm},\tilde{P}_n^{\pm})_{n \in J})$ are the
data of  another GED with the same rates,  then 
the following holds
\begin{equation} \label{eq2:estproj2}
  \|P_n^+- \tilde{P}_n^+\| \le K^2
  \left(\frac{\lambda_+}{\lambda_-}\right)^{n-n_-} \|P_{n_-}^+-
  \tilde{P}_{n_-}^+\|, 
  \quad n\in J.
    \end{equation}
\end{lemma}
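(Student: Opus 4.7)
For the first assertion, the plan is to show the two inclusions separately. The inclusion ``$\supseteq$''\negthinspace of the characterizing set into $\range(P_m^+)$ is the nontrivial one. For ``$\subseteq$'', given $x\in\range(P_m^+)$ one has $P_m^+x=x$, so that $\|\Phi(n,m)x\|=\|\Phi(n,m)P_m^+x\|\le K\lambda_+^{n-m}\|x\|$ by \eqref{eq2:dich+}, and $C=K$ suffices. For the reverse inclusion, given $x$ satisfying the growth bound with some constant $C$, I decompose $x=P_m^+x+P_m^-x$ and aim to show $y\coloneqq P_m^-x=0$. Since $\Phi(n,m)y=\Phi(n,m)x-\Phi(n,m)P_m^+x$, the triangle inequality combined with the hypothesis and \eqref{eq2:dich+} yields $\|\Phi(n,m)y\|\le C'\lambda_+^{n-m}\|x\|$ for all $n\ge m$. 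The key move is then to exploit the invariance $\Phi(n,m)P_m^-=P_n^-\Phi(n,m)$ together with invertibility to write $y=\Phi(m,n)P_n^-\Phi(n,m)y$; applying \eqref{eq2:dich-} in the form $\|\Phi(m,n)P_n^-\|\le K\lambda_-^{m-n}$ for $m\le n$ gives $\|y\|\le KC'(\lambda_+/\lambda_-)^{n-m}\|x\|$, which forces $y=0$ upon letting $n\to\infty$, since $\lambda_+<\lambda_-$.

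For the estimate \eqref{eq2:estproj2}, the starting point is to apply part (1) to both GEDs: because the characterizing set on the right-hand side of \eqref{eq2:rangechar} depends only on the rate $\lambda_+$, the ranges coincide, $\range(P_n^+)=\range(\tilde P_n^+)$ for every $n\in J$. Hence $\tilde P_n^+$ acts as the identity on $\range(P_n^+)$ and vice versa, yielding the cross-relations $\tilde P_n^+P_n^+=P_n^+$ and $P_n^+\tilde P_n^+=\tilde P_n^+$. Setting $Q_n\coloneqq P_n^+-\tilde P_n^+$, a short computation then gives $P_n^+Q_n=Q_n$ and $Q_nP_n^+=0$, the latter equivalent to $Q_n=Q_nP_n^-$. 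Combining these produces the factorization $Q_n=P_n^+Q_nP_n^-$, which at $n=n_-$ reads $Q_{n_-}=P_{n_-}^+Q_{n_-}P_{n_-}^-$.

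With this in hand, the final step is to propagate $Q_{n_-}$ to $Q_n$ via the invariance of both projector families. Since $P_n^{\pm}\Phi(n,n_-)=\Phi(n,n_-)P_{n_-}^{\pm}$ and the same holds for the tildes, one obtains $Q_n\Phi(n,n_-)=\Phi(n,n_-)Q_{n_-}$, i.e.\ $Q_n=\Phi(n,n_-)Q_{n_-}\Phi(n_-,n)$. Substituting the factored form of $Q_{n_-}$ and using $P_{n_-}^-\Phi(n_-,n)=\Phi(n_-,n)P_n^-$ yields
\begin{equation*}
Q_n=\bigl(\Phi(n,n_-)P_{n_-}^+\bigr)\,Q_{n_-}\,\bigl(\Phi(n_-,n)P_n^-\bigr).
\end{equation*}
Submultiplicativity together with the two GED bounds \eqref{eq2:dich+} and \eqref{eq2:dich-} then gives $\|Q_n\|\le K\lambda_+^{n-n_-}\cdot\|Q_{n_-}\|\cdot K\lambda_-^{n_- -n}$, which is exactly \eqref{eq2:estproj2}.

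The main obstacle is conceptual rather than computational: it lies in recognizing that part (1) forces the $+$-ranges to agree while the $-$-ranges may differ, and in turning this into the clean factorization $Q_n=P_n^+Q_nP_n^-$ which, when combined with invariance, isolates exactly the two norms controlled by \eqref{eq2:dich+} and \eqref{eq2:dich-}. Once the factorization is in place the estimate is essentially automatic.
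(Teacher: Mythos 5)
Your proof is correct and follows essentially the same route as the paper's: for the range characterization you insert $\Phi(m,n)\Phi(n,m)=I$ and apply the $\lambda_-$-estimate to show $P_m^-x=0$ (the paper bounds $\|P_m^-x\|$ directly rather than first passing to a growth bound on $y=P_m^-x$, but this is a cosmetic difference); and for the projector estimate you derive the factorization $Q_n=P_n^+Q_nP_n^-$ from the equality of $+$-ranges, conjugate by $\Phi(n,n_-)$, and apply the two dichotomy bounds, exactly as in the paper.
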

\begin{proof}
  The relation `` $\subseteq$ `` in \eqref{eq2:rangechar} is
  obvious. For  the converse, consider $x\in \R^d$ 
  with   
  $\|\Phi(n,m)x\|\le C \lambda_+^{n-m}\|x\|$ for $n \ge m$. Then we
  conclude  from \eqref{eq2:dich-}  
\begin{align*}
  \| P_m^- x \|=\|P_m^- \Phi(m,n)\Phi(n,m)x\| \le K C
  \left(\frac{\lambda^+}{\lambda^-}\right)^{n-m}\|x\| , \quad n\ge m. 
\end{align*}
The right-hand side converges to zero as $n \to \infty$, hence we
obtain $P_m^-x=0$ and $x \in \range(P_m^+)$. 
  For the second assertion note that 
$\range(P_n^+)=\range(\tilde{P}_n^+)$ implies $P_n^+\tilde{P}_n^+=
\tilde{P}_n^+$, $\tilde{P}_n^+P_n^+= P_n^+$ 
and, therefore, 
\begin{align*}
  P_n^+(P_n^+-\tilde{P}_n^+)P_n^-=
  P_n^+(P_n^+-\tilde{P}_n^+)(I-P_n^+)=P_n^+\tilde{P}_n^+(P_n^+-I)=P_n^+
  - \tilde{P}_n^+. 
\end{align*}
The dichotomy estimates then show
\begin{align*}
  \|P_n^+ - \tilde{P}_n^+\|&  =\|\Phi(n,n_-)(P_{n_-}^+ -
                             \tilde{P}_{n_-}^+) \Phi(n_-,n)\| \\ 
  & = \|\Phi(n,n_-)P_{n_-}^+(P_{n_-}^+ - \tilde{P}_{n_-}^+)P_n^-
    \Phi(n_-,n)\| \\ 
  & \le K \lambda_+^{n-n_-}\|P_{n_-}^+ - \tilde{P}_{n_-}^+\| K
    \lambda_-^{n_--n}. 
\end{align*}

\end{proof}
  
Next, we define the dichotomy spectrum and the resolvent set in the GED setting.
\begin{definition} \label{def2:dichspectrum}
  The dichotomy resolvent set and dichotomy spectrum are defined by
  \begin{equation} \label{eq2:defres}
    \begin{aligned}
      R_{\ED}& = \bigcup \{ (\lambda_+,\lambda_-): \eqref{diffeq}\; \text{has a GED
      with rates}\; \lambda_+<\lambda_- \; \text{on}\; J \}  \\
        \Sigma_{\ED}& = (0,\infty) \setminus R_{\ED}.
    \end{aligned}
    \end{equation}
\end{definition}
Note that $R_{\ED}$ is open and $\Sigma_{\ED}$ is closed relative to $(0,\infty)$.
Then we always have $(0,\varepsilon),(\varepsilon^{-1},\infty)
\subseteq R_{\ED}$ for some $\varepsilon>0$ since the matrices $A_n$
and their inverses are uniformly bounded. 
Further, the GED and thus also the spectral notions remain unchanged
when we replace 
$J$ by a smaller interval $\{n \in \Z:n \ge N\}$ for some $N\ge n_-$.
However, we 
do not set $n_-=0$ in general, since we will derive estimates with
constants independent of $n_-$ and then let $n_-$ tend to infinity. 

By Lemma \ref{relateEDGED},  the definition \eqref{eq2:defres}  agrees
with the usual 
one for spectrum and resolvent set in the ED setting (\cite{as01,P2012,BeHu22}):
\begin{align*} R_{\ED}&=\{\lambda > 0: \Phi_{\lambda} \text{ has an
  ED  on } J\},\\
  \Sigma_\ED & = \{\lambda > 0 : \Phi_{\lambda} \text{ has no ED on } J\}.
  \end{align*}

The Spectral Theorem \cite[Theorem 3.4]{as01} provides the
decomposition
$\Sigma_\ED = \bigcup_{k=1}^\varkappa \cI_k$ of the dichotomy spectrum into
$\varkappa \le d$ spectral intervals
$$
\cI_k = [\sigma_k^-, \sigma_k^+], \ k = 1,\dots,\varkappa,\quad \text{where}\quad
0 < \sigma_{\varkappa}^- \le \sigma_{\varkappa}^+ < \dots < \sigma_1^-
\le \sigma_1^+ < \infty. 
$$
Similarly, one decomposes the resolvent set
$R_\ED =
\bigcup_{k=1}^{\varkappa+1} R_k$ into disjoint open intervals 
\begin{equation*} \label{resolventset}
 R_k = (\sigma_k^+,\sigma_{k-1}^-),\
k=1,\ldots,\varkappa+1,  \text{ where } \sigma_{\varkappa+1}^+=0,\quad
\sigma_{0}^-= \infty, 
\end{equation*}
see Figure \ref{GED}.
If $\sigma_k^- = \sigma_{k}^+$ holds for an index $k\in
\{1,\dots,\varkappa\}$ then the 
 spectral interval $\mathcal{I}_k$ degenerates into an isolated point.
For every $\lambda \in R_k$, $k\in \{1,\dots,\varkappa+1\}$
one has dichotomy projectors   $P_{n,k}^+$, $P_{n,k}^-=I- P_{n,k}^+$
for $n \in J$ of \eqref{scale}, 
which depend on $k$ but not on $\lambda\in R_k$; see Lemma
\ref{relateEDGED}.  
By the characterization \eqref{eq2:rangechar} it is immediate that the
ranges of the projectors form 
an increasing flag of subspaces, i.e.
\begin{equation} \label{Rhierarchy}
\{0\} = \range(P_{n,\varkappa+1}^+) \subseteq
        \range(P_{n,\varkappa}^+)\subseteq \dots \subseteq \range(P_{n,1}^+) =
        \R^d.
\end{equation}
One can further choose the projectors such that  the nullspaces, or
equivalently the ranges of 
the complementary projectors, form a decreasing flag of subspaces:
\begin{equation} \label{Nhierarchy}
\R^d = \range(P_{n,\varkappa+1}^-) \supseteq
        \range(P_{n,\varkappa}^-)\supseteq \dots \supseteq \range(P_{n,1}^-) =
        \{0\}.
\end{equation}
Given this situation, one introduces spectral bundles as follows:  
\begin{equation}\label{specbun}
\begin{aligned}
\cW^k_n &\coloneqq \range (P_{n,k}^+) \cap \range (P_{n,k+1}^-),\quad k =
1,\dots, \varkappa.
\end{aligned}
\end{equation}

The fiber projector $\cP_{n,k},\ k=1,\ldots,\varkappa$  onto $\cW_n^k$ along
$\bigoplus_{\nu=1,\nu\neq k}^{\varkappa} \cW_n^{\nu}$ is given by
\begin{equation} \label{fiberproj}
  \cP_{n,k}=P_{n,k}^+ P_{n,k+1}^-=P_{n,k+1}^- P_{n,k}^+=P_{n,k}^+- P_{n,k+1}^+
  =P_{n,k+1}^- - P_{n,k}^-.
\end{equation}
Spectral bundles satisfy for $k=1,\dots,\varkappa$ and 
$n,m\in J$ the invariance condition
\begin{equation}\label{invar}
\Phi(n,m)\cW_m^k = \cW_n^k.
\end{equation}

\begin{figure}[hbt]
\begin{center}
\includegraphics[width=0.95\textwidth]{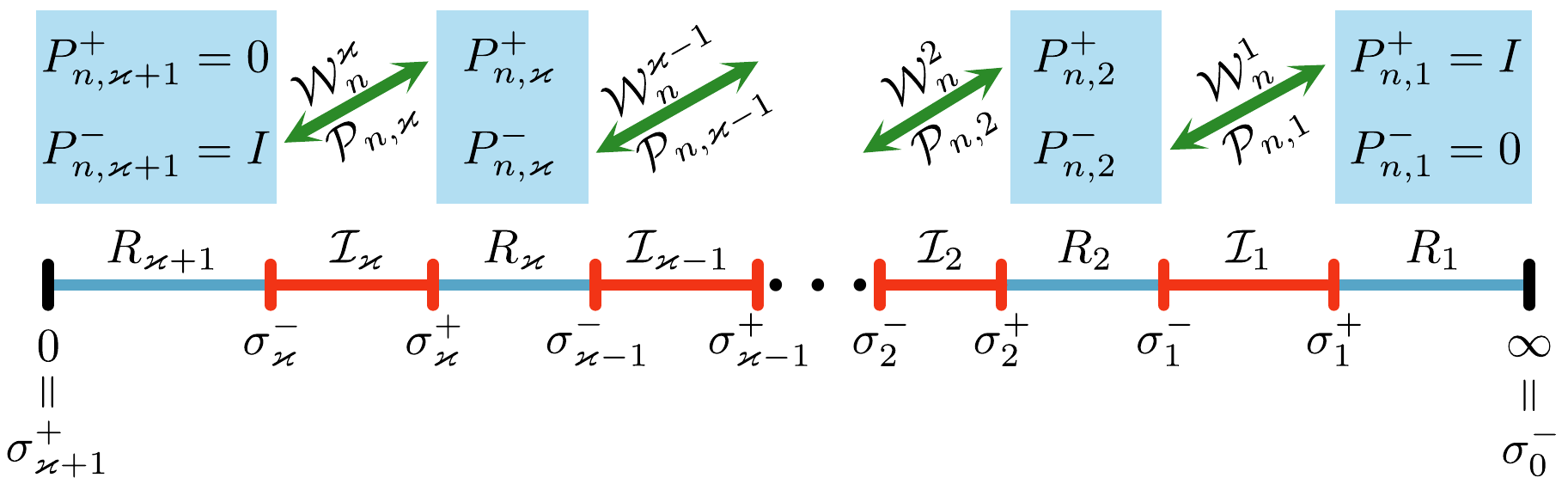}   
\end{center}
\caption{\label{GED} Illustration of spectral intervals (red), of resolvent
  intervals (blue) and the construction of spectral bundles with
  corresponding fiber projectors.}  
\end{figure}

\begin{definition}\label{deftrace}
Every element $V\in \cG(s,d)$ of the form
\begin{equation*} \label{tracespaceX}
  V = \bigoplus_{k=1}^{\varkappa}W_k: \quad W_k \subseteq \cW_n^k \;
  \text{(subspace)}\; k=1,\ldots,\varkappa,\quad
\sum_{k=1}^{\varkappa}\dim W_k=s
\end{equation*}
is called a \textbf{trace space} at time $n$. 
The set of all trace spaces at time $n$ is denoted by $\cD_n(s,d)$.
\end{definition}
The corresponding trace projectors $\cT_n:\cG(s,d)\to \cD_n(s,d)$ are defined by 
\begin{equation}\label{traceproj}
\cT_n(V) = \bigoplus_{k=1}^{\varkappa} (P_{n,k+1}^-(\range(P_{n,k}^+)\cap V) ).
\end{equation}
From (\cite[(3.18)]{BeHu22}) we have that the trace projectors are onto, i.e.
\begin{equation} \label{Donto}
\cD_n(s,d)=\{\cT_n(V):V \in \cG(s,d)\}.
\end{equation}
Moreover, from \eqref{invar}, \eqref{traceproj}, \eqref{Donto} one infers invariance
according to
\begin{equation} \label{Dinvar}
  \Phi(n,m)\cT_m(V)= \cT_n(\Phi(n,m)V), \quad \Phi(n,m) \cD_m(s,d)= \cD_n(s,d).
  \end{equation}
 Note that $\cD_n(s,d)$ is usually substantially smaller than $\cG(s,d)$
 and even finite if all spectral bundles are one-dimensional \cite[(3.25)]{BeHu22}.
We cite the reduction theorem  
\cite[Theorem 3.6]{BeHu22} on $J=\N_0$ by using the averages from \eqref{def:alpha}.
 
\begin{theorem}\label{thm2:reduce}
Assume that the difference equation \eqref{diffeq} has the dichotomy spectrum
$ \Sigma_\ED=\bigcup_{k=1}^{\varkappa}[\sigma_k^-,\sigma_k^+]$ with
fibers $\cW_n^k,\ k=1\ldots,\varkappa$ and trace projector \eqref{traceproj}.
Then the following equality holds for all $V\in \cG(s,d)$
\begin{equation}\label{obs1}
\begin{aligned}
  &\varlimsup_{n\to \infty}
  \alpha_n(V)
= \varlimsup_{n\to \infty}
\alpha_n(\cT_0(V)),
\end{aligned}
\end{equation}
and similarly with $\varliminf$ instead of $\varlimsup$.
\end{theorem}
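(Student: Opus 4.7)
The plan is to reduce the theorem to the statement that $d(\Phi(n,0)V,\Phi(n,0)\cT_0(V))\to 0$ as $n\to\infty$ for every $V\in \cG(s,d)$, where $d$ is the Grassmann metric from \eqref{metric}. Once this is established, the triangle inequality for the metric $\ang(\cdot,\cdot)$ on $\cG(s,d)$ yields
\begin{align*}
&\big|\ang(\Phi(j-1,0)V,\Phi(j,0)V)-\ang(\Phi(j-1,0)\cT_0(V),\Phi(j,0)\cT_0(V))\big|\\
&\quad \le \ang(\Phi(j-1,0)V,\Phi(j-1,0)\cT_0(V))+\ang(\Phi(j,0)V,\Phi(j,0)\cT_0(V))
\end{align*}
for each $j\in \N$, whose right-hand side tends to zero. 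Averaging over $j=1,\dots,n$ and invoking the Ces\`aro mean theorem yields $\alpha_n(V)-\alpha_n(\cT_0(V))\to 0$, and equality of both $\varlimsup$ and $\varliminf$ follows at once.

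To prove the key Grassmann convergence, I would use a basis of $V$ adapted to the decreasing flag $V\supseteq V\cap\range(P_{0,2}^+)\supseteq\cdots\supseteq V\cap\range(P_{0,\varkappa}^+)\supseteq\{0\}$: pick vectors $v_1,\dots,v_s\in V$ and indices $k_1,\dots,k_s\in\{1,\dots,\varkappa\}$ (with possible repetitions) such that, for every $k$, the subset $\{v_i:k_i\ge k\}$ is a basis of $V\cap \range(P_{0,k}^+)$. Setting $\tilde v_i:=P_{0,k_i+1}^-v_i=\cP_{0,k_i}v_i\in \cW_0^{k_i}$ and $r_i:=v_i-\tilde v_i\in \range(P_{0,k_i+1}^+)$, the definition \eqref{traceproj} together with a dimension count shows that $\{\tilde v_i\}_{i=1}^s$ is a basis of $\cT_0(V)$. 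The GED upper bound \eqref{eq2:dich+} applied to $r_i$ in the resolvent interval $R_{k_i+1}$, combined with the lower bound derived from \eqref{eq2:dich-} applied to $\tilde v_i\in\range(P_{0,k_i+1}^-)$ in the spirit of the manipulations in the proof of Lemma \ref{lem3:GEDprop}, then give, for any sufficiently small $\eps>0$,
\begin{equation*}
\frac{\|\Phi(n,0)r_i\|}{\|\Phi(n,0)\tilde v_i\|}\le C\left(\frac{\sigma_{k_i+1}^++\eps}{\sigma_{k_i}^--\eps}\right)^n,
\end{equation*}
which decays geometrically since $\sigma_{k_i+1}^+<\sigma_{k_i}^-$ by the definition of the resolvent intervals.

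The main obstacle is lifting this per-basis-vector alignment to convergence of the full $s$-dimensional subspaces $\Phi(n,0)V$ and $\Phi(n,0)\cT_0(V)$ in the Grassmannian. The difficulty is that the bases $\{\Phi(n,0)v_i\}_{i=1}^s$ and $\{\Phi(n,0)\tilde v_i\}_{i=1}^s$ typically become severely ill-conditioned, since individual basis vectors grow at rates belonging to different spectral intervals and can cluster in direction inside a common spectral bundle. My approach is to rescale each $v_i$ (and correspondingly $\tilde v_i$) by a factor matching its asymptotic growth rate, thereby producing bases of uniformly bounded condition number; the Grassmann distance is then controlled by the operator-norm distance between the rescaled spanning matrices via the singular value characterization of principal angles recalled in Section \ref{sec1.0}, and this distance inherits the geometric decay of the individual ratios above. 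This line of argument is worked out in detail in \cite[Theorem 3.6]{BeHu22}, of which the present theorem is a direct transcription into the angular-average setting.
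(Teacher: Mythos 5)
The paper itself does not re-derive this theorem: it is imported as a citation from \cite[Theorem 3.6]{BeHu22}, so there is no in-paper proof to compare against. Your overall strategy is nevertheless the natural one and matches the route taken there: reduce the claim to $\ang(\Phi(n,0)V,\Phi(n,0)\cT_0(V))\to 0$, then conclude via the angle triangle inequality plus Ces\`aro averaging. Your flag-adapted basis of $V$, the identification $\tilde v_i=\cP_{0,k_i}v_i$ as a basis of $\cT_0(V)$, and the GED-based ratio estimate $\|\Phi(n,0)r_i\|/\|\Phi(n,0)\tilde v_i\|\le C\big((\sigma_{k_i+1}^+{+}\eps)/(\sigma_{k_i}^-{-}\eps)\big)^n$ are all correct.

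The gap is in the lifting step, and it is a real one. You yourself note that the images $\Phi(n,0)\tilde v_i$ ``can cluster in direction inside a common spectral bundle,'' but your proposed remedy---rescale each vector by a scalar matched to its growth rate---cannot repair that: scalar rescaling never changes a direction, so the condition number of the rescaled basis still degenerates whenever clustering occurs. Concretely, take $A=\mathrm{diag}(J_\mu,J_\lambda)$ with $2\times 2$ Jordan blocks, $0<\mu<\lambda$, and $V=\mathrm{span}(e_1{+}e_3,e_2{+}e_4)$, so that $\cT_0(V)=\mathrm{span}(e_3,e_4)=\cW_0^1$. Then $\Phi(n,0)\tilde v_1/\|\Phi(n,0)\tilde v_1\|$ and $\Phi(n,0)\tilde v_2/\|\Phi(n,0)\tilde v_2\|$ both converge to $e_3$, the rescaled basis becomes arbitrarily ill-conditioned, yet $\Phi(n,0)V\to\cW_0^1$ in the Grassmannian holds anyway. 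So the desired conclusion is true, but not for the reason you give: a correct argument must exploit that the remainder $\sum_i c_i\Phi(n,0)r_i$ and the principal part $\sum_i c_i\Phi(n,0)\tilde v_i$ share the same coefficient vector $(c_i)$ and control their ratio uniformly over the coefficient sphere (or, equivalently, use a dynamically adapted Schur-type basis within each bundle and argue by induction over spectral intervals). Note also that the theorem is stated without any CED or semisimplicity hypothesis, so this non-semisimple case is exactly the one the proof must handle; this is the technical core of \cite[Theorem 3.6]{BeHu22} that your sketch leaves open.
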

An immediate consequence of this theorem is the reduction of  angular
values and of the 
outer angular spectrum to the set of traces spaces.
\begin{theorem} \label{thm2:reducespec}
  Under the assumption of Theorem \ref{thm2:reduce}
the outer angular values are given by
\begin{equation} \label{eq2:reduceval}
\begin{aligned}
  \theta_{s}^{\sup, \varlimsup} &= \sup_{V \in \cD_0(s,d)}
  \varlimsup_{n\to\infty} \alpha_n(V), \quad
    \theta_s^{\sup,\varliminf} =\sup_{V \in \cD_0(s,d)}
  \varliminf_{n\to\infty} \alpha_n(V), \\
  \theta_{s}^{\inf, \varlimsup}& = \inf_{V \in \cD_0(s,d)}
  \varlimsup_{n\to\infty} \alpha_n(V), \quad
    \theta_s^{\inf,\varliminf} =\inf_{V \in \cD_0(s,d)}
  \varliminf_{n\to\infty} \alpha_n(V),
\end{aligned}
\end{equation}
and the outer angular spectrum satisfies
\begin{equation} \label{eq2:reduceangspec}
\Sigma_s = \cl\big\{\theta \in [0,\tfrac \pi 2]: \exists V\in\cD_0(s,d): 
\varliminf_{n\to \infty} \alpha_n(V) \le \theta \le \varlimsup_{n\to \infty}
\alpha_n(V)\big\}.
\end{equation}
\end{theorem}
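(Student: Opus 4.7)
The plan is to derive both statements as direct consequences of Theorem \ref{thm2:reduce} combined with the surjectivity property \eqref{Donto} of the trace projector $\cT_0:\cG(s,d)\to\cD_0(s,d)$. Theorem \ref{thm2:reduce} tells us that the limit inferior and limit superior of $\alpha_n(V)$ are each preserved when we replace $V$ by its trace $\cT_0(V)$, so the whole interval $[\varliminf_n \alpha_n(V),\varlimsup_n \alpha_n(V)]$ is left invariant under this projection. This is the single fact that drives everything.

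For the identities \eqref{eq2:reduceval} on the four outer angular values, I would argue as follows. Taking for instance the $(\sup,\varlimsup)$-value, Theorem \ref{thm2:reduce} gives
\[
\sup_{V\in\cG(s,d)}\varlimsup_{n\to\infty}\alpha_n(V)
=\sup_{V\in\cG(s,d)}\varlimsup_{n\to\infty}\alpha_n(\cT_0(V)).
\]
Since $\cD_0(s,d)\subseteq\cG(s,d)$ the last supremum is at least $\sup_{W\in\cD_0(s,d)}\varlimsup_n\alpha_n(W)$, while by \eqref{Donto} every $W\in\cD_0(s,d)$ can be written as $\cT_0(V)$ for some $V\in\cG(s,d)$, giving the reverse inequality. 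The remaining three identities follow by the same argument with $\varliminf$ and/or $\inf$ in place of $\varlimsup$ and $\sup$.

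For the spectral identity \eqref{eq2:reduceangspec}, write
\[
S_\cG=\bigl\{\theta\in[0,\tfrac\pi2]:\exists V\in\cG(s,d),\ \varliminf_n\alpha_n(V)\le\theta\le\varlimsup_n\alpha_n(V)\bigr\}
\]
and define $S_\cD$ analogously with $\cD_0(s,d)$ in place of $\cG(s,d)$. The inclusion $S_\cD\subseteq S_\cG$ is trivial because $\cD_0(s,d)\subseteq\cG(s,d)$. For the converse, given $\theta\in S_\cG$ witnessed by some $V\in\cG(s,d)$, set $W=\cT_0(V)\in\cD_0(s,d)$. Applying Theorem \ref{thm2:reduce} twice (once for $\varlimsup$, once for $\varliminf$) yields
\[
\varliminf_{n\to\infty}\alpha_n(W)=\varliminf_{n\to\infty}\alpha_n(V)\le\theta\le\varlimsup_{n\to\infty}\alpha_n(V)=\varlimsup_{n\to\infty}\alpha_n(W),
\]
so $\theta\in S_\cD$. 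Hence $S_\cG=S_\cD$, and taking closures gives \eqref{eq2:reduceangspec}.

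There is no real obstacle here: the proof is an essentially formal combination of the reduction theorem for individual subspaces with the surjectivity of the trace projector. The only point that deserves explicit mention is that Theorem \ref{thm2:reduce} must be invoked in both the $\varlimsup$ and $\varliminf$ versions, since the defining condition of the outer angular spectrum uses both.
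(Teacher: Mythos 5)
Your proof is correct and follows the route the paper implicitly intends; the paper states this theorem as "an immediate consequence" of Theorem \ref{thm2:reduce} without spelling out the argument, and you supply exactly the right details: replace $V$ by $\cT_0(V)$ via the reduction theorem, then use the fact that the image of $\cT_0$ on $\cG(s,d)$ is exactly $\cD_0(s,d)$.

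One small imprecision worth flagging in the passage on \eqref{eq2:reduceval}: you attribute the inequality "$\sup_{V\in\cG}\varlimsup_n\alpha_n(\cT_0(V))\ge \sup_{W\in\cD_0}\varlimsup_n\alpha_n(W)$" to the inclusion $\cD_0(s,d)\subseteq\cG(s,d)$, which only works if one also knows $\cT_0$ acts as the identity on trace spaces, and you attribute the reverse inequality to surjectivity, which in fact gives $\ge$ rather than $\le$. The cleanest way to see both directions at once is simply that \eqref{Donto} identifies the set $\{\cT_0(V):V\in\cG(s,d)\}$ with $\cD_0(s,d)$, so the two sets $\{\varlimsup_n\alpha_n(\cT_0(V)):V\in\cG(s,d)\}$ and $\{\varlimsup_n\alpha_n(W):W\in\cD_0(s,d)\}$ are literally equal, and hence so are their suprema and infima. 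Your treatment of \eqref{eq2:reduceangspec} is clean and needs no such correction.
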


\begin{example} \label{ex2:revisit} (Example \ref{ex2:exauto} revisited) 
Spectral bundles of the dichotomy spectrum are eigenspaces given by 
\begin{align*}
\cW_0^1 &= \Span \begin{pmatrix}0 & 0 & 1\end{pmatrix}^\top,\quad
\cW_0^2 = \Span \left(\begin{pmatrix}1 & 0
  &0\end{pmatrix}^\top,  \begin{pmatrix}0 & 1 &0\end{pmatrix}^\top\right).
  \end{align*}
By Theorem \ref{thm2:reducespec} it suffices to consider for $s=1$ initial spaces
$V=\mathrm{span}(v)$ with either $v \in \cW_0^1$ or $v \in \cW_0^2$. This shows that the
computation for the mixed case in  Example \ref{ex2:exauto} was not necessary. 
Moreover, we  obtain the angular values listed in Table \ref{tab1} and
the angular spectrum for $s=1$ and $s=2$ as follows: 
\[
\Sigma_s = \{0,\varphi\} \subseteq [\theta_s^{\inf,\varliminf},
\theta_s^{\sup, \varlimsup}] = [0,\varphi].
\]
\begin{table}[hbt]
\begin{center}
\begin{tabular}{c|l}
angular value & achieved in\\[2mm]
$\theta_1^{\inf,\varliminf} = 0$ & $V = \cW_0^1$\\
$\theta_1^{\sup,\varlimsup} = \varphi $ & $V=\Span \begin{pmatrix}1 &
  0 & 0\end{pmatrix}^\top$\\ 
$\theta_2^{\inf,\varliminf} = 0$ & $V = \cW_0^2$\\
$\theta_2^{\sup,\varlimsup} = \varphi $ & $V=\Span
\left(\begin{pmatrix}0 & 0 & 1\end{pmatrix}^\top, \begin{pmatrix}1 
  & 0 & 0\end{pmatrix}^\top\right)$
\end{tabular}
\end{center}
\caption{Outer angular values for \eqref{mot3d}.\label{tab1}}
\end{table}
\end{example}

\begin{example} \label{ex6:mixed}
  As a continuation of Examples \ref{ex:normal2} and \ref{ex2:revisit}
  we  determine $\Sigma_2(A)$ for  
  \begin{equation} \label{eq6:auto3}
    A = \begin{pmatrix} B & b \\ 0 & \lambda \end{pmatrix} \in
    \R^{3,3}, \quad  b\in \R^2,\ |\lambda|\neq 1,\ B=A(\rho,\varphi)\ 
    (\text{see } \eqref{rhomatrix}).
  \end{equation}
    Note that  $\left(\begin{smallmatrix} w \\
        1 \end{smallmatrix}\right)$ with $w = (\lambda I_2 - B)^{-1}b$
    is an eigenvector 
    of $A$ with eigenvalue $\lambda$. Further, the matrix $A$ in
    \eqref{eq6:auto3} is the Schur normal form 
    of a general real $3 \times 3$-matrix which has a complex
    conjugate eigenvalue of modulus $1$ and a real eigenvalue 
    of modulus $\neq 1$.
  The dichotomy spectrum  is $\{1,|\lambda|\}$ and Theorem
  \ref{thm2:reducespec} shows  that  
  it suffices to study two-dimensional subspaces $V_0$ of $\R^3$ which
  have  basis  vectors 
  from $\R^2 \times \{0\}$ or
  $\mathrm{span}\left(\begin{smallmatrix} w \\ 1 \end{smallmatrix}
  \right)$. If both are from 
  the first space then the resulting angular value vanishes. Hence, we
  consider 
  \begin{align*}
    V_0 = \mathrm{span} \left( \begin{pmatrix} x_0 \\
        0 \end{pmatrix}, \begin{pmatrix} w \\ 1 \end{pmatrix} \right), 
    \quad x_0\in \R^2,\ x_0 \neq 0.
  \end{align*}
  For the iterates we  have
  \begin{align*}
    V_j =A^j V_0 =  \mathrm{span} \left( \begin{pmatrix} x_j \\
        0 \end{pmatrix}, \begin{pmatrix} w \\ 1 \end{pmatrix} \right),
    \quad x_j = B^j x_0. 
  \end{align*}
   An orthogonal (but not necessarily orthonormal) basis is 
  \begin{align*}
    V_j = \mathrm{span}\left(Qx_j, \begin{pmatrix} w\\
        1 \end{pmatrix}\right), \quad Q= \begin{pmatrix} 
      (1+ \|w\|^2)I_2 - w w^{\top} \\ - w^{\top} \end{pmatrix} \in
    \R^{3,2} . 
    \end{align*}
   According to Example \ref{ex2:s=2,d=3} the maximal principal angle
   between $V_j$ and $V_{j-1}$ is given by 
    \begin{equation} \label{eq6:vjformula} 
    \ang(V_j,V_{j-1})= \ang( Qx_j,Qx_{j-1}).
  \end{equation}
    Similar to \cite[Section 6.1]{BeFrHu20} we determine
  \begin{align*}
    \theta_2(\eta)= \lim_{n \to \infty} \frac{1}{n} \sum_{j=1}^n
    \ang(V_j,V_{j-1}), \quad 
    x_0=r(\eta) \coloneqq \begin{pmatrix} \cos(\eta) \\
      \sin(\eta) \end{pmatrix},\quad  \eta \in [0,2 \pi)
  \end{align*}
  by applying ergodic theory to the function
  \begin{equation} \label{eq6:gdef}
    g(\eta) = \ang \big( Q r(T_{\rho,\varphi}\eta),Q r(\eta)
    \big),\quad   T_{\rho,\varphi}\eta= \Psi_{\rho}\big(\varphi+
    \Psi_{\rho}^{-1}(\eta)\big), 
  \end{equation}
  where $\Psi_{\rho}$ is defined in \eqref{defdeltag}. The result is
     \begin{equation} \label{eq6:erglimit}
        \theta_2(\eta)=
        \begin{cases} \frac{1}{2 \pi}\int_0^{2 \pi} g(\Psi_{\rho}(\xi)) d\xi, &
          \frac{\varphi}{\pi} \notin \Q, \\
          \frac{1}{q} \sum_{\ell=0}^{q-1}
          g(T_{\rho,\varphi}^{\ell}\eta), & \frac{\varphi}{\pi}= 
          \frac{p}{q}, p \perp q, p,q \in \N.
        \end{cases}
     \end{equation}
     The second outer angular spectrum is then of the form
     \begin{align} \label{eq6:Sigma2}
     \Sigma_2(A) =\begin{cases}  \{0,\theta_2\}, \theta_2\equiv
       \theta_2(\eta), & \frac{\varphi}{\pi} \notin \Q, \\ 
     \{0\} \cup [\min_{\eta} \theta_2(\eta), \max_{\eta}
     \theta_2(\eta)], & \frac{\varphi}{\pi} \in \Q, 
     \end{cases}
     \end{align}
     where $\theta_2(\eta)$ is given by \eqref{eq6:gdef},
     \eqref{eq6:erglimit}.

We numerically compute the second outer angular spectrum for the
non-resonant case $\varphi= 1.25$ as well as
for the resonant case $\varphi = \frac 25 \pi\approx 1.2566$ nearby. 
 In particular, we illustrate in Figures \ref{reso} and \ref{nonreso} the
influence of $\rho\in(0,1]$ and of the angle between the eigenvector 
$\left(\begin{smallmatrix}w\\1\end{smallmatrix}\right)$ and the
$xy$-plane $X \coloneqq
\Span\left(\left(\begin{smallmatrix}1\\0\\0\end{smallmatrix}\right),
  \left(\begin{smallmatrix}0\\1\\0\end{smallmatrix}\right) \right)$.
This angle is given by
\[
\ang\left(X,\begin{pmatrix}w\\1\end{pmatrix}\right)
=: \gamma_w\quad \text{where} \quad
\cos(\gamma_w)  
=\frac{\|w\|}{\sqrt{\|w\|^2+1}}.
\]
We choose $\gamma_w\in(0,\frac \pi 2]$ and $w = \frac
{\cos(\gamma_w)}{\sqrt{1+\cos(\gamma_w)^2}}
\left(\begin{smallmatrix}1\\0\end{smallmatrix}\right)$    
in Figures \ref{nonreso} and \ref{reso}.

\begin{figure}[hbt]
    \begin{center}
      \includegraphics[width=0.99\textwidth]{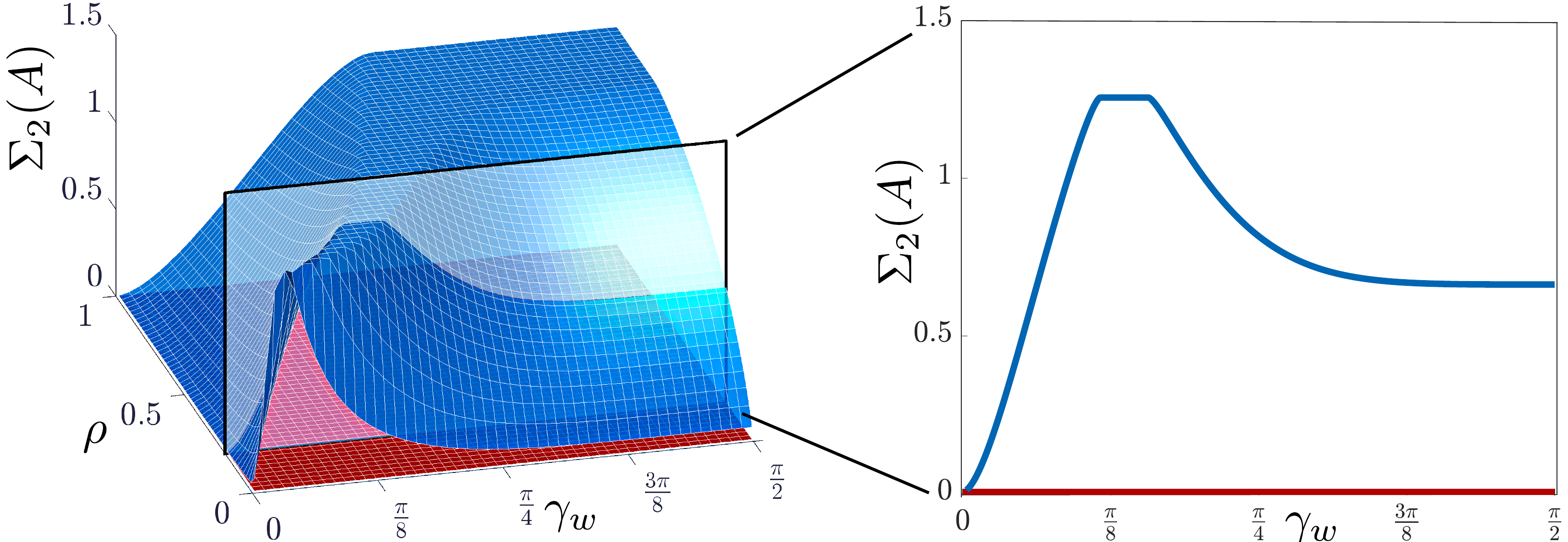}
    \end{center}
\caption{Computation of the second angular spectrum for the 
non-resonant case $\varphi = 1.25$. 
Left panel:  For each $(\gamma_w,\rho)$-pair we obtain point
  spectrum only (red plane at $0$ and blue
  surface). Right panel: Point spectrum for fixed $\rho =
  0.2$.\label{nonreso}} 
\end{figure}

\begin{figure}[hbt]
    \begin{center}
      \includegraphics[width=0.99\textwidth]{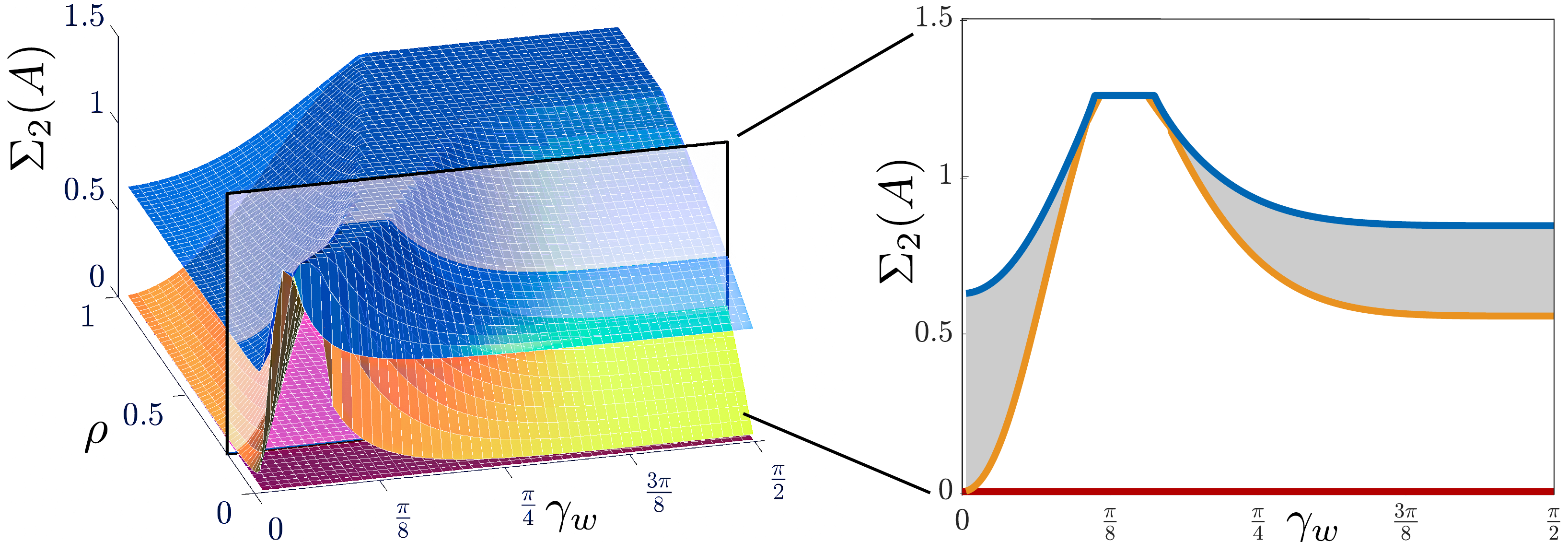}
    \end{center}
\caption{Computation of the second angular spectrum for the 
resonant case $\varphi = \frac 25 \pi$. 
 Left panel: For each $(\gamma_w,\rho)$-pair the spectrum consists of
 the point spectrum $0$ (red plane) and of the interval between the
 orange and the 
 blue surface. Note that in certain areas, both surfaces coincide and the
 spectral intervals degenerate to a point. 
 In the intersection of this figure with the plane at $\rho
 = 0.2$ (right panel) the spectrum consists of the curves and the grey
 area between them. \label{reso}} 
\end{figure}
 
\end{example}


\section{Perturbation theory and invariance of the outer angular spectrum}
\label{sec3}
In this section we present some of the main theoretical results of the
paper. We first show that two 
dynamical systems have the same outer angular spectrum if they are
kinematically similar by a transformation 
which becomes orthogonal at infinity. 
In the second step we consider dynamical systems which have a rather
strict dichotomy structure called 
complete exponential dichotomy (CED). This notion may be considered as
a generalization of an autonomous 
system with a matrix having only semi-simple eigenvalues. Such systems
do not only have a
dichotomy point spectrum which is invariant under
$\ell^1$-perturbations (a property that holds more generally 
\cite{P2012}),  but also their outer angular
spectra persist under these perturbations.
\subsection{Kinematic similarity and invariance}

Consider a system which is kinematically similar to \eqref{diffeq},
i.e.\ we transform variables by 
 $v_n=Q_n u_n$ with $Q_n \in \mathrm{GL}(\R^d)$ to obtain
\begin{equation} \label{difftransform2}
  v_{n+1} = \tilde{A}_n v_n, \quad \tilde{A}_n =
  Q_{n+1}A_n Q_n^{-1}, \quad n \in J.
\end{equation}
The corresponding solution operators
$\tilde{\Phi}(n,m)$ and $\Phi(n,m)$ are related by
\begin{equation}\label{relatePhi2}
  \tilde{\Phi}(n,m)Q_m= Q_n \Phi(n,m), \quad n,m \in J.
\end{equation}
We show that the outer angular spectrum is invariant under two types of
similarity transformation.
\begin{proposition}(Invariance of angular spectrum) \label{prop3:inv}
  The outer angular spectrum of the systems \eqref{diffeq} and
  \eqref{difftransform2} coincide in the following two cases: 
  \begin{itemize}
    \item[(i)] $Q_n= q_n I_d$ holds with $q_n\neq 0$ for all $n \in \N_0$.
    \item[(ii)]
   The limit  $\lim_{n \to \infty}Q_n=Q\in \R^{d,d}$ exists and is
  orthogonal.
   \end{itemize}
  \end{proposition}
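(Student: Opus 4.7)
The plan is to reduce both cases to showing that for every $V \in \cG(s,d)$, with $V' \coloneqq Q_0^{-1}V$, the averages satisfy $\tilde\alpha_n(V) - \alpha_n(V') \to 0$ as $n \to \infty$, where $\tilde\alpha_n$ denotes the average from \eqref{def:alpha} built from $\tilde\Phi$. Once this is in hand, $\varliminf$ and $\varlimsup$ coincide for the two sequences, and since $V \mapsto Q_0^{-1}V$ is a bijection of $\cG(s,d)$, Definition \ref{def3:1} yields $\Sigma_s(\tilde\Phi) = \Sigma_s(\Phi)$. Case (i) is then immediate from \eqref{relatePhi2}: a scalar $Q_n = q_n I_d$ gives $\tilde\Phi(n,m) = (q_n/q_m)\Phi(n,m)$, so $\tilde\Phi(j,0)V = \Phi(j,0)V$ as subspaces, $V' = V$, and the summands of the two averages agree identically.

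For case (ii), I set $W_j \coloneqq \Phi(j,0)V'$ so that $\tilde\Phi(j,0)V = Q_j W_j$ by \eqref{relatePhi2}. Since $Q$ is orthogonal, hence $\ang$-preserving on $\cG(s,d)$, the triangle inequality for the metric $\ang$ on $\cG(s,d)$ (noted after \eqref{metric}) gives
\[
\bigl|\ang(Q_j W_j, Q_{j-1}W_{j-1}) - \ang(W_j, W_{j-1})\bigr| \le \eta_j + \eta_{j-1},
\]
where $\eta_j \coloneqq \sup_{U \in \cG(s,d)} \ang(Q_j U, Q U)$. The core task is to prove $\eta_j \to 0$. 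This follows from joint continuity of the action $(R,U) \mapsto RU$ on $\mathrm{GL}(\R^d) \times \cG(s,d)$ together with compactness of $\cG(s,d)$ and of $\{Q_j : j \ge j_0\} \cup \{Q\}$; the latter set is compact since $Q_j \to Q$, and the $Q_j$ lie in $\mathrm{GL}(\R^d)$ for $j$ large because $Q$ is invertible. Ces\`aro averaging of the null sequence $\eta_j + \eta_{j-1}$ then yields $\tilde\alpha_n(V) - \alpha_n(V') \to 0$, as desired.

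The only technical point I anticipate is the uniform estimate $\eta_j \to 0$; if the abstract compactness argument feels too terse, a direct bound works just as well. For any orthonormal basis $B$ of $U \in \cG(s,d)$ one has $\|(Q_j - Q)B\| \le \|Q_j - Q\|$, and a short Gram--Schmidt stability argument applied to $Q_j B$ versus the orthonormal matrix $QB$ shows that the principal angles between $Q_j U$ and $QU$ are of size $O(\|Q_j - Q\|)$, uniformly in $U$. Either route gives $\eta_j \to 0$ and closes the argument.
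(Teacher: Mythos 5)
Your proposal is correct, and case (i) matches the paper essentially verbatim. For case (ii) you take a genuinely different route: the paper simply invokes an external result (Proposition 4.3 of \cite{BeHu23X}) which directly supplies the Ces\`aro average estimate \eqref{basicPhi2}, whereas you derive the estimate from scratch. Your derivation is clean: replace $\ang(W_j,W_{j-1})$ by $\ang(QW_j,QW_{j-1})$ using orthogonality of $Q$, then apply the triangle inequality for the metric $\ang$ on $\cG(s,d)$ twice to get the bound $\eta_j+\eta_{j-1}$, and finally show $\eta_j \to 0$. What this buys you is a self-contained argument that makes the mechanism visible; the paper's citation is more economical but opaque. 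One remark on the final step: you do not need either the compactness argument or a fresh Gram--Schmidt stability estimate, since the paper already provides exactly the tool you want in Lemma \ref{app:Lest2} of the supplement, namely $|\ang(SV,W)-\ang(V,W)| \le C_{\pi}\|S-I_d\|$. Applying it with $S=Q^{\top}Q_j$ (again using that $Q$ is angle-preserving) gives $\eta_j \le C_{\pi}\|Q^{\top}Q_j - I\| \le C_{\pi}\|Q_j - Q\| \to 0$, which is shorter than either of your two suggested routes and ties the argument back to the paper's own toolbox.
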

\begin{proof} {\it (i):}
  From \eqref{relatePhi2} we have $\tilde{\Phi}(n,0)= \frac{q_n}{q_0}
  \Phi(n,0)$, hence  $\tilde{\Phi}(n,0)V$ and $\Phi(n,0)V$ agree and
  the assertion follows from Definition \ref{def3:1}. 

  {\it (ii):}
  In \cite[Proposition 4.3]{BeHu23X} we have shown that for any
  $\varepsilon>0$ there exists $N=N(\varepsilon)$ such that for all $n
  \ge N$ and for all $V \in \mathcal{G}(s,d)$ 
  \begin{equation} \label{basicPhi2}
   \frac{1}{n} \sum_{j=1}^n
   |\ang(\tilde{\Phi}(j-1,0)Q_0V,\tilde{\Phi}(j,0)Q_0V) -
   \ang(\Phi(j-1,0)V,\Phi(j,0)V)| \le \varepsilon. 
  \end{equation}
  From this estimate we infer that the averages
  \begin{align*}
    \tilde{\alpha}_n(V)= \frac 1n
    \sum_{j=1}^n\ang(\tilde{\Phi}(j-1,0)V,\tilde{\Phi}(j,0)V), 
    \quad V \in \cG(s,d) 
  \end{align*}
  satisfy
  \begin{align*}
    \varlimsup_{n\to \infty}\tilde{\alpha}_n(Q_0V)=\varlimsup_{n \to
    \infty}\alpha_n(V), 
    \quad
    \varliminf_{n\to \infty}\tilde{\alpha}_n(Q_0V)=\varliminf_{n \to
    \infty}\alpha_n(V). 
  \end{align*}
  Hence the equality of the outer angular spectra and the outer
  angular values follows. 
\end{proof}

\subsection{Complete exponential dichotomy (CED) and persistence of
  angular spectrum} 
\label{sec3:2}
The following definition will be essential for the subsequent perturbation theory.
\begin{definition} \label{def:CED}
  The system \eqref{diffeq} has a \textbf{complete exponential
    dichotomy (CED) } 
  if there exist nontrivial projectors $\cP_{n,k}$, $n\in J$,
  $k=1,\ldots,\varkappa$ 
  and constants $K>0$ and $0 < \sigma_{\varkappa}< \dots < \sigma_1 <\infty$ 
  such that the following properties hold for all  $n,m \in J$ and $
  k,\ell =1,\ldots, \varkappa$: 
  \begin{align} \label{eq3:prop1CED}
           \cP_{n,k} \cP_{n,\ell}& =\delta_{k \ell}\cP_{n,k}, \quad
                                   \sum_{j=1}^{\varkappa} \cP_{n,j} =
                                   I,  
        \\
      \Phi(n,m)\cP_{m,k}& = \cP_{n,k} \Phi(n,m),   \label{eq3:prop2CED} \\
      \|\Phi(n,m)\cP_{m,k}\| & \le K \sigma_k^{n-m}. \label{eq3:prop3CED}
    \end{align}
 \end{definition}
Note that there is no restriction $n \ge m$ in the estimate 
\eqref{eq3:prop3CED}. As usual, we denote by $(K,\sigma_k,\cP_{n,k})_{n\in
  J}^{k=1,\ldots,\varkappa}$ the dichotomy data of the CED.

\begin{proposition} \label{prop3:specCED}
  If the system \eqref{diffeq} has a CED with data
  $(K,\sigma_k,\cP_{n,k})_{n\in J}^{k=1,\ldots,\varkappa}$
  then  the following estimates hold:
  \begin{equation} \label{eq3:estboth}
    \frac{1}{K} \sigma_{k}^{n-m}\|x\| \le \| \Phi(n,m) x \| \le K
    \sigma_{k}^{n-m}\|x\| \quad \forall  x\in \range(\cP_{m,k}),\
    n,m\in J,\ 
    k=1,\ldots,\varkappa.
  \end{equation}
\end{proposition}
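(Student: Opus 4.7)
The plan is to observe that both inequalities reduce to a single application of the CED estimate \eqref{eq3:prop3CED}, which crucially imposes no ordering between $n$ and $m$. The upper bound is immediate: for $x\in\range(\cP_{m,k})$ we have $\cP_{m,k}x=x$, hence
\[
\|\Phi(n,m)x\|=\|\Phi(n,m)\cP_{m,k}x\|\le K\sigma_k^{n-m}\|x\|,
\]
with no restriction on the sign of $n-m$.

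For the lower bound, the key point is that the fiber $\range(\cP_{\cdot,k})$ is $\Phi$-invariant. Indeed, from the commutation relation \eqref{eq3:prop2CED} one has $\cP_{n,k}\Phi(n,m)x=\Phi(n,m)\cP_{m,k}x=\Phi(n,m)x$, so $y\coloneqq\Phi(n,m)x\in\range(\cP_{n,k})$. Applying the CED estimate to $y$ with the roles of $n$ and $m$ swapped (again using that \eqref{eq3:prop3CED} imposes no ordering) yields
\[
\|x\|=\|\Phi(m,n)y\|=\|\Phi(m,n)\cP_{n,k}y\|\le K\sigma_k^{m-n}\|y\|,
\]
which rearranges to $\|\Phi(n,m)x\|=\|y\|\ge K^{-1}\sigma_k^{n-m}\|x\|$.

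There is essentially no obstacle here; the whole point of formulating the CED via the two-sided bound \eqref{eq3:prop3CED} (rather than the usual one-sided dichotomy estimate) together with the invariance \eqref{eq3:prop2CED} is precisely to make the two-sided sharp bound \eqref{eq3:estboth} a trivial consequence. The only subtlety worth flagging is that invertibility of $\Phi(n,m)$ — guaranteed by the standing assumption that the $A_n$ are invertible with uniformly bounded inverses — is implicitly used when we pass from $\Phi(n,m)$ to $\Phi(m,n)$ for arbitrary $n,m\in J$.
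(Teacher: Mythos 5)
Your proof is correct and follows essentially the same route as the paper: the upper bound is the CED estimate applied directly, and the lower bound is obtained by writing $\|x\|$ as $\|\Phi(m,n)\Phi(n,m)x\|$, using the commutation relation to place $\cP_{n,k}$ next to $\Phi(m,n)$, and applying the two-sided CED estimate once more. The only cosmetic difference is that you name $y=\Phi(n,m)x$ and argue via membership in $\range(\cP_{n,k})$, while the paper keeps everything as one chain of operator-norm inequalities.
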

\begin{proof}
  For $x=\cP_{m,k}x$ the estimate \eqref{eq3:prop3CED} implies the
  upper estimate in \eqref{eq3:estboth} and also 
  the lower one as follows:
 \begin{align*}
 \|x\|& = \|\cP_{m,k} x\| = \|\Phi(m,n)\Phi(n,m)\cP_{m,k} x\| \le
    \|\Phi(m,n)\cP_{n,k}\| \|\Phi(n,m) x\|\\
&\le K\sigma_{k}^{m-n} \|\Phi(n,m)  x\|.  
\end{align*}
\end{proof}
 By this proposition the elements of $\range(\cP_{m,k})$ evolve at the
 exact rate $\sigma_k$ and the rates 
 are uniquely determined. In fact, they form the dichotomy spectrum; see
 Theorem \ref{thm3:maininv} below. Nevertheless, the projectors need not be
unique, as the following example shows.

\begin{example}\label{ex3:notunique}
  Consider the autonomous system $u_{n+1}=Au_n$ with 
  \begin{align*}
    A= \mathrm{diag}(\sigma_1,\sigma_2,\sigma_3)\in \R^3, \quad 0<\sigma_3 <
    \sigma_2=1 <\sigma_1.
  \end{align*}
  Obviously, the system has  a CED with $\varkappa=3$
  and projectors $\cP_{n,k}= e^k (e^k)^{\top}$, $k=1,2,3$. However, we can
  also set
  \begin{align*}
    \tilde{\cP}_{0,3}&=e^3 (e^3-e^2)^{\top},\quad \tilde{\cP}_{n,3}=
                       A^n \tilde{\cP}_{0,3} A^{-n}=
                       e^3(e^3-\sigma_3^n e^2)^{\top}, \\ 
    \tilde{\cP}_{0,2}& = (e^3+e^2)(e^2)^{\top}, \quad
                       \tilde{\cP}_{n,2}= A^n \tilde{\cP}_{0,2}
                       A^{-n}= (\sigma_3^n e^3 +e^2)(e^2)^{\top},\\ 
 \tilde{\cP}_{n,1}& =P_{n,1}.
  \end{align*}
  Since we have $\|uv^{\top}\|=\|u\| \|v\|$ for the spectral norm of a
  rank $1$ matrix we find 
  for $n,m \in \N_0$
  \begin{align*}
    \|A^{n-m}\tilde{\cP}_{m,3}\| &
                                   =\|(\sigma_3^{n-m}e^3)(e^3-\sigma_3^m
                                   e^2)^{\top}\|
=\sigma_3^{n-m}\sqrt{1 + \sigma_3^{2m}} \le \sqrt{2} \sigma_3^{n-m}, \\ 
    \|A^{n-m}\tilde{\cP}_{m,2}\| &
                                   =\|(\sigma_3^{n}e^3+e^2)(e^2)^{\top}\|
                                   =\sqrt{1 + \sigma_3^{2n}} \le
                                   \sqrt{2}=\sqrt{2} \sigma_2^{n-m},\\ 
    \|A^{n-m}\tilde{\cP}_{m,1}\| & = \sigma_1^{n-m}.
      \end{align*}
  Thus the system also has a CED with data
  $(\sqrt{2},\sigma_k,\tilde{\cP}_n^k)_{n\in \N_0}^{k=1,2,3}$. 
\end{example}
In the following we consider a perturbed system
\begin{equation} \label{eq3:perturbsyst}
  v_{n+1}=\tilde{A}_nv_n, \quad \tilde{A}_n=A_n + E_n, \quad n \in J
\end{equation}
with solution operator $\tilde{\Phi}$
and state our main result.
\begin{theorem} \label{thm3:maininv}
  Let the system \eqref{diffeq} have a CED with data
  $(K,\sigma_k,\cP_{n,k})_{n\in J}^{k=1,\ldots,\varkappa}$ and let the perturbations
  $E_J= (E_n)_{n \in J}$ be such that $A_n+E_n$ is invertible for all
  $n \in J$ and  
  \begin{equation} \label{eq3:L1norm}
    \|E_J\|_{\ell^1}= \sum_{n \in J}\|E_n\| <\infty.
  \end{equation}
  Then the perturbed system \eqref{eq3:perturbsyst} has a CED with data
$(\tilde{K},\sigma_k,\tilde{\cP}_{n,k})_{n\in J}^{k=1,\ldots,\varkappa}$
  and there exists a constant $C>0$ such that 
  \begin{equation} \label{eq3:projest}
    \sup_{n \in J} \|\cP_{n,k}- \tilde{\cP}_{n,k} \| \le C \sum_{n \in
      J} \|E_n\|,  \quad 
     \lim_{n  \to \infty}\cP_{n,k}- \tilde{\cP}_{n,k}=0, \quad k=1,\ldots,\varkappa.
  \end{equation}
  Moreover, both systems have the same dichotomy spectrum and the same
  outer angular spectrum 
  \begin{equation} \label{eq3:specequal}
    \Sigma_{\ED}(\Phi)=\{\sigma_1,\ldots,\sigma_{\varkappa}\}=
    \Sigma_{\ED}(\tilde{\Phi}), 
    \quad \Sigma_s(\Phi) = \Sigma_s(\tilde{\Phi}).
  \end{equation}
\end{theorem}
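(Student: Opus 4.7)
The plan is to proceed in three stages corresponding to CED persistence, projector convergence, and angular-spectrum equality. Viewing the CED of $\Phi$ as a family of $\varkappa-1$ GEDs at the interior cuts $(\sigma_{k+1},\sigma_k)$ with projectors $P_{n,k}^{+}=\sum_{j\ge k}\cP_{n,j}$ and $P_{n,k}^{-}=I-P_{n,k}^{+}$, the dichotomy bounds \eqref{eq2:dich+}--\eqref{eq2:dich-} follow by summing \eqref{eq3:prop3CED}. A Palmer-type roughness result for GEDs under absolutely summable perturbations (in the vein of \cite{P2012,PR16}) then yields, for each $k$, a GED of $\tilde{\Phi}$ at the same rates, with projectors $\tilde{P}_{n,k}^{\pm}$ obeying $\sup_{n\in J}\|P_{n,k}^{+}-\tilde{P}_{n,k}^{+}\|\le C_k\|E_J\|_{\ell^1}$ and $\|P_{n,k}^{+}-\tilde{P}_{n,k}^{+}\|\to 0$ as $n\to\infty$. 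Setting $\tilde{\cP}_{n,k}:=\tilde{P}_{n,k}^{+}-\tilde{P}_{n,k+1}^{+}$ with the natural boundary conventions $\tilde{P}_{n,1}^{+}=I$, $\tilde{P}_{n,\varkappa+1}^{+}=0$, the algebraic relations \eqref{eq3:prop1CED} and the invariance \eqref{eq3:prop2CED} are immediate, and both parts of \eqref{eq3:projest} follow by subtraction.

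The crux is to upgrade the GED estimates to the CED bound \eqref{eq3:prop3CED} for $\tilde{\Phi}$ with the \emph{exact} rate $\sigma_k$; roughness alone furnishes GED bounds only with slightly perturbed rates. For $x\in\range\tilde{\cP}_{m,k}$ I would apply the discrete variation-of-constants identity
\[
\tilde{\Phi}(n,m)=\Phi(n,m)+\sum_{j=m}^{n-1}\Phi(n,j+1)\,E_j\,\tilde{\Phi}(j,m),\qquad n\ge m,
\]
and its backward analogue for $n\le m$, insert the fibre decomposition $\Phi(n,j+1)=\sum_{\ell}\Phi(n,j+1)\cP_{j+1,\ell}$ and invoke \eqref{eq3:prop3CED} fibrewise. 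A Gronwall-type iteration in which $\sum_j\|E_j\|<\infty$ is absorbed as a finite multiplicative constant, combined with the Stage-1 bound showing that $\range\tilde{\cP}_{m,k}$ differs from $\range\cP_{m,k}$ by at most $C\|E_J\|_{\ell^1}$ in operator norm, yields $\|\tilde{\Phi}(n,m)\tilde{\cP}_{m,k}\|\le\tilde{K}\sigma_k^{n-m}$ for all $n,m\in J$. Proposition \ref{prop3:specCED} then places every $\sigma_k$ into $\Sigma_{\ED}(\tilde{\Phi})$, while the bracketing GEDs show that every other point of $(0,\infty)$ is in the resolvent set, yielding $\Sigma_{\ED}(\tilde{\Phi})=\Sigma_{\ED}(\Phi)=\{\sigma_1,\ldots,\sigma_\varkappa\}$.

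For the equality of outer angular spectra I would invoke Proposition \ref{prop3:inv}(ii). Select orthonormal frames $U_n\in O(d)$ by Gram-Schmidt from adapted generators of the fibres $\cW_n^k$, and analogously $\tilde{U}_n$ from $\tilde{\cW}_n^k$; by \eqref{eq3:projest} and standard stability of orthonormal bases under perturbations of the projectors, the frames can be synchronized so that $Q_n:=\tilde{U}_n U_n^{\top}$ converges to $I\in O(d)$ as $n\to\infty$. In the conjugated coordinates both systems are block-diagonal at matching rates $\sigma_k$, so applying the estimate leading to \eqref{basicPhi2} fibrewise in the scaled variables and then invoking Proposition \ref{prop3:inv}(ii) gives $\Sigma_s(\Phi)=\Sigma_s(\tilde{\Phi})$.

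The principal obstacle is the sharp-rate CED estimate in Stage 2: roughness of GEDs only produces rates within $\varepsilon$ of the target, and eliminating this loss relies decisively on the $\ell^1$-summability of $E$ in the convolution bound — exactly the hypothesis whose weakening is ruled out by the counterexamples of Examples \ref{ex3:counter1} and \ref{ex6:Jordancounter}. The orthogonal conjugation in Stage 3 is then a routine consequence of the projector convergence and a Gram-Schmidt perturbation analysis.
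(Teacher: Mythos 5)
Your overall architecture (CED $\to$ family of GEDs $\to$ roughness $\to$ reconstruct CED $\to$ kinematic similarity $\to$ Proposition \ref{prop3:inv}(ii)) matches the paper's, but two of the key steps as you describe them contain real gaps.

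First, your definition $\tilde{\cP}_{n,k}:=\tilde{P}_{n,k}^{+}-\tilde{P}_{n,k+1}^{+}$ does \emph{not} in general yield projectors satisfying \eqref{eq3:prop1CED}. The difference formula \eqref{fiberproj} requires not only the range flag \eqref{Rhierarchy} (which does hold automatically by Lemma \ref{lem3:GEDprop} since ranges of GED projectors are uniquely determined) but also the nullspace flag \eqref{Nhierarchy}, and the latter is \emph{not} automatic for the perturbed projectors $\tilde{P}_{n,k}^{\pm}$ coming out of a roughness theorem: the nullspaces of GED projectors are not uniquely determined, and the fixed-point constructions applied independently at each spectral gap $k$ need not produce nested nullspaces. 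This is precisely the point of Remark \ref{rem3:prodproj}, and it is why the paper's Proposition \ref{lem3:CEDspec}(ii) defines $\tilde{\cP}_{n,k}$ via the nontrivial product $\tilde{P}_{n,k}^{-}\tilde{P}_{n,k-1}^{+}\cdots\tilde{P}_{n,0}^{+}$; the verification that these are commuting orthogonal projectors summing to $I$ and satisfying the exact CED rate bound is a substantive algebraic computation you have skipped.

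Second, your motivation for Stage 2 rests on a misconception: roughness under $\ell^1$-perturbations already preserves rates \emph{exactly} (Theorem \ref{thm3:rough}, following Coppel's $L^1$-roughness), so there is no ``$\varepsilon$-loss'' to repair and the variation-of-constants/Gronwall step is unnecessary overhead. Third, and more seriously, your Stage 3 is not a proof. Choosing orthonormal frames $U_n,\tilde{U}_n$ adapted to the fibres and setting $Q_n=\tilde{U}_nU_n^{\top}$ produces a sequence of orthogonal matrices converging to the identity, but nothing in this construction guarantees the intertwining identity $\tilde{\Phi}(n,m)Q_m=Q_n\Phi(n,m)$, which is exactly what ``kinematic similarity'' means and what Proposition \ref{prop3:inv}(ii) needs. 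A frame alignment at each fixed time $n$ imposes no dynamical constraint; the paper's construction $Q_j^{N}=\tilde{\Phi}(j,N)\bigl(\sum_k\tilde{\cP}_{N,k}\cP_{N,k}\bigr)\Phi(N,j)$ is built so that the intertwining relation \eqref{eq3:funcrel} holds by construction, and then the uniform Cauchy estimate (using the CED bound and $\ell^1$-summability) gives the limit $Q_j\to I$. You would need to supply an argument of comparable force; ``routine Gram--Schmidt perturbation analysis'' does not deliver the intertwining property.
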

\begin{remark} \label{rem3:assume}
  From \eqref{eq3:L1norm} and the uniform invertibility of $A_n$ we
  conclude that 
  $A_n+E_n$ is uniformly invertible for $n$ sufficiently
  large. Therefore, it suffices 
  to assume invertibility of $A_n+E_n$ for finitely many $n\in
  J$. Likewise, one may shrink 
  $J$ to $\{n\in \Z:n\ge N\}$ so that this assumption is not needed at all.
\end{remark}
The proof of this theorem needs several preparation and will be given
in section \ref{roughproof}. The final 
goal is to show that the system \eqref{eq3:perturbsyst} is
kinematically similar to 
the original one \eqref{diffeq} so that Proposition \ref{prop3:inv}
applies. 

Before proceeding we consider the autonomous case.
\begin{example} \label{ex3:semisimple}
   In the autonomous case $A_n\equiv A$ the system \eqref{diffeq} has a CED
  if $A$ is invertible and (complex) diagonalizable.
  To see this, transform $A$ into real block-diagonal form where each
  block collects 
  eigenvalues of the same absolute value, i.e.
  \begin{equation} \label{eq3:blockabs}
    \begin{aligned}
      S^{-1}A S & = \mathrm{diag}(A_1,\ldots,A_{\varkappa}),\; A_k\in \R^{d_k,d_k},\\
      A_k& = \sigma_k \mathrm{diag}(T_{\varphi_1}, \ldots, T_{\varphi_{\ell_k}},
      s_1,\ldots, s_{d_k -2 \ell_k}),
    \end{aligned}
  \end{equation}
  where $T_{\varphi}$ is the rotation matrix from \eqref{mot3d}  and
  $s_j \in \{-1,1\}$. For the spectral norm $\|\cdot\|_2$ we obtain
  $\|A_k\|_2^{\nu} = \sigma_k^{\nu}$ for all $\nu \in \Z$. Further the
  projectors are given 
  for $k=1,\ldots,\varkappa$, $n \in \N_0$
  by $\cP_{n,k} = S \mathrm{diag}(0,\ldots,0,I_{d_k},0,\ldots,0) S^{-1}$, and 
  they commute with $A$. Finally, the norm is defined by
  \begin{align*}
    \|x\|_S^2 =\sum_{k=1}^{\varkappa}\|
    \mathrm{diag}(0,\ldots,0,I_{d_k},0,\ldots,0)S^{-1} x \|_2^2, \quad
    x \in \R^d. 
  \end{align*}
  A computation shows that $\|A^{\nu}\cP_{n,k}x\|_S = \sigma_k^{\nu} \|x\|_S$
  holds  for all $x$ and $\nu \in \Z$ so that conditions
  \eqref{eq3:prop1CED}-\eqref{eq3:prop3CED} are satisfied. 
  Conversely, let $A$ have an eigenvalue $\lambda_k\in \C$ of absolute
  value $\sigma_k=|\lambda_k|$ which is 
  not semi-simple. Then there exists a vector $x \in \R^d$ with
  $\|A^nx\| \asymp n \sigma_k^n \|x\|$ which violates 
  the rate estimate \eqref{eq3:estboth}. Hence the semi-simplicity of
  eigenvalues is also a necessary condition for a CED 
  to hold.
\end{example}
\begin{remark} According to Theorem \ref{thm3:maininv}, a CED implies
  pure point spectrum. 
  However, the converse is not necessarily true as shown by Example
  \ref{ex3:semisimple}. 
  Therefore,  the CED is rather a
  generalization of semi-simple eigenvalues than of an isolated
  spectrum to nonautonomous linear 
  dynamical systems.
  \end{remark}

\subsection{Relation between GED and CED}
The following proposition relates the GED and the CED notion to each other.
  
\begin{proposition} \label{lem3:CEDspec}
  Let the system \eqref{diffeq} and
  $\sigma_{\varkappa+1}=0<\sigma_{\varkappa}< \cdots < 
  \sigma_{1} < \sigma_{0}= \infty$ be given. Then the following holds.
  \begin{itemize}
    \item[(i)]
  If the system \eqref{diffeq} has a CED with data
  $(K,\sigma_k,\cP_{n,k})_{n \in J}^{k=1,\ldots,\varkappa}$ 
  then it has a GED with data
  $(\varkappa
  K,\sigma_{k+1},\sigma_k,P_{n,k}^+=\sum_{\ell=k+1}^{\varkappa}
  \cP_{n,\ell}, 
  P_{n,k}^- = \sum_{\ell=1}^{k} \cP_{n,\ell})_{n \in J}$ for each
  $k=0,\ldots,\varkappa$. 
\item[(ii)] If the system \eqref{diffeq} has a GED with data
  $(K,\sigma_{k+1},\sigma_k,P_{n,k}^+,P_{n,k}^-)_{n \in J}$ for 
  $k=0,\ldots,\varkappa$ then it has a CED with data
  $(K^{\varkappa+1},\sigma_k,\cP_{n,k})_{n \in
    J}^{k=1,\ldots,\varkappa}$ where for $n \in J$ 
  \begin{equation} \label{eq3:defcP}
    \cP_{n,k} = P_{n,k}^-  P_{n,k-1}^+ \cdots P_{n,0}^+=(I-P_{n,k}^+)
    P_{n,k-1}^+ \cdots P_{n,0}^+, \quad k=1,\ldots,\varkappa. 
  \end{equation}
  \end{itemize}
\end{proposition}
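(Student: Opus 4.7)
The plan is direct verification in both directions: in (i) the GED projectors are built as prescribed sums of the $\cP_{n,k}$'s, while in (ii) fiber projectors are extracted from the family of GED projectors via the product formula.

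For (i), with $P_{n,k}^{\pm}$ defined as indicated, the identities $(P_{n,k}^{\pm})^{2}=P_{n,k}^{\pm}$ and $P_{n,k}^{+}+P_{n,k}^{-}=I$ follow termwise from \eqref{eq3:prop1CED}, and commutation with $\Phi$ from \eqref{eq3:prop2CED}. The growth estimate would then come from the triangle inequality and monotonicity of the rates: for $n\ge m$,
\[
\|\Phi(n,m)P_{m,k}^{+}\|\le \sum_{\ell=k+1}^{\varkappa}\|\Phi(n,m)\cP_{m,\ell}\|\le K\sum_{\ell=k+1}^{\varkappa}\sigma_{\ell}^{n-m}\le \varkappa K\,\sigma_{k+1}^{n-m},
\]
since $\sigma_{\ell}\le \sigma_{k+1}$ for $\ell\ge k+1$; the symmetric $P^{-}$ bound for $n\le m$ uses $\sigma_{\ell}\ge \sigma_{k}$ for $\ell\le k$ combined with the sign change from $n-m\le 0$.

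For (ii) my first step is to simplify the product in \eqref{eq3:defcP}. By Lemma~\ref{lem3:GEDprop} the ranges $\range(P_{n,k}^{+})$ are uniquely determined by their growth characterization and form the descending flag $\R^{d}=\range(P_{n,0}^{+})\supseteq \cdots\supseteq \range(P_{n,\varkappa}^{+})=\{0\}$. I would then choose the complementary projectors consistently (cf.\ \eqref{Nhierarchy}), so that their ranges form an ascending flag. Under this compatible choice, for indices $k>j$ both $P_{n,j}^{+}P_{n,k}^{+}=P_{n,k}^{+}$ (the identity action of $P_{n,j}^{+}$ on its range) and $P_{n,k}^{+}P_{n,j}^{+}=P_{n,k}^{+}$ (since $\range(P_{n,j}^{-})\subseteq \ker P_{n,k}^{+}$) hold, and the product telescopes to
\[
\cP_{n,k}=P_{n,k}^{-}P_{n,k-1}^{+}=P_{n,k-1}^{+}-P_{n,k}^{+}.
\]

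From this compact representation the CED axioms \eqref{eq3:prop1CED}--\eqref{eq3:prop3CED} follow directly. Mutual orthogonality and idempotence reduce to a short algebraic check, completeness $\sum_{k=1}^{\varkappa}\cP_{n,k}=P_{n,0}^{+}-P_{n,\varkappa}^{+}=I$ telescopes, and commutation with $\Phi$ is inherited from the GEDs. For the rate estimate I would observe that $y=\cP_{m,k}x$ satisfies both $y=P_{m,k-1}^{+}y$ and $y=P_{m,k}^{-}y$; consequently the $P^{+}$-bound of the GED with rates $(\sigma_{k},\sigma_{k-1})$ applies when $n\ge m$ and the $P^{-}$-bound of the GED with rates $(\sigma_{k+1},\sigma_{k})$ applies when $n\le m$, each yielding $\|\Phi(n,m)y\|\le K\sigma_{k}^{n-m}\|y\|$. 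Bounding $\|\cP_{m,k}\|$ by the product of the $\|P_{m,j}^{\pm}\|\le K$ factors appearing in the unsimplified formula produces the advertised constant $K^{\varkappa+1}$. The main subtlety is that \eqref{eq3:defcP} depends on the specific choice of complementary projectors, which are not uniquely determined by the GED data alone; this is resolved by the standard existence of complements realising both hierarchies \eqref{Rhierarchy}--\eqref{Nhierarchy} simultaneously.
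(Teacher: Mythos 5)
Your part (i) matches the paper's argument essentially verbatim. The problem is in part (ii), where you re-choose the complementary projectors so that the nullspaces also form a flag, obtain the telescoping identity $\cP_{n,k}=P_{n,k-1}^{+}-P_{n,k}^{+}$, and read off the CED axioms from that compact form. This is a genuine gap, for two reasons. First, the telescoping fails in general: the range flag \eqref{Rhierarchy} gives you $P_{n,\ell}^{+}P_{n,k}^{+}=P_{n,k}^{+}$ for $\ell\le k$ (projectors with larger index annihilate those with smaller index from the right, as in \eqref{eq3:orderrange}), but $P_{n,k}^{+}P_{n,\ell}^{+}=P_{n,k}^{+}$ is \emph{not} automatic; it needs the nullspace flag \eqref{Nhierarchy}, which the paper explicitly warns (Remark \ref{rem3:prodproj}) ``does not necessarily hold.'' Second, and more fundamentally, the statement of (ii) asserts that \eqref{eq3:defcP} built from the \emph{given} GED projectors $P_{n,k}^{\pm}$ is a CED; re-choosing the $P_{n,k}^{-}$ changes the product $\cP_{n,k}=P_{n,k}^{-}P_{n,k-1}^{+}\cdots P_{n,0}^{+}$ and so proves only that \emph{some} CED exists, not the one claimed. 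This distinction matters downstream: in the proof of Theorem \ref{roughCED} the identity $\cP_{n,k}=P_{n,k}^{-}P_{n,k-1}^{+}\cdots P_{n,0}^{+}$ with the original projectors is used to compare perturbed and unperturbed fiber projectors, so the proposition must hold for the given data. The paper's proof avoids the shortcut entirely: it verifies the orthogonality relations $\cP_{m,k}\cP_{m,\ell}=\delta_{k\ell}\cP_{m,k}$ by direct computation in the untelescoped product form, treating the three cases $k>\ell$, $k=\ell$, $k<\ell$ separately and invoking only \eqref{eq3:orderrange}; completeness and invariance also telescope in the untelescoped form, so the nullspace flag is never needed. Your rate estimate, incidentally, is sound: one can check $\cP_{m,k}x=P_{m,k}^{-}\cP_{m,k}x=P_{m,k-1}^{+}\cP_{m,k}x$ directly from \eqref{eq3:orderrange} without any flag assumption, so that part does not depend on the flawed telescoping -- but the orthogonality argument does, and that is where your proposal breaks down.
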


\begin{remark} \label{rem3:trivialcase} In assertion (i) we set
  $\sum_{\emptyset} =0$  so that we have projectors 
  $P_{n,\varkappa}^+=0$ and $P_{n,0}^- =0$. Similarly, note that
  $P_{n,0}^+=I$ and $P_{n,\varkappa}^-=I$ are used  in
  \eqref{eq3:defcP}. 
\end{remark}

\begin{proof}
  (i) \\
  Because of the relations \eqref{eq3:prop1CED}- \eqref{eq3:prop3CED}
  the matrices $P_{n,k}^+=\sum_{\ell=k+1}^{\varkappa} \cP_{n,\ell}$ 
  and $P_{n,k}^- = \sum_{\ell=1}^{k} \cP_{n,\ell}$  satisfy the invariance
  condition (i) in Definition \ref{edDef} and moreover 
  \begin{align*}
    \|\Phi(n,m)P_{m,k}^+\| &  \le \sum_{\ell=k+1}^{\varkappa}
                             \|\Phi(n,m) \cP_{m,\ell}\| 
    \le K \sum_{\ell=k+1}^{\varkappa} \sigma_{\ell}^{n-m} \le
                             (\varkappa-k) K \sigma_{k+1}^{n-m},\quad
                             n\ge m, \\ 
    \|\Phi(n,m)P_{m,k}^-\| &  \le \sum_{\ell=1}^{k} \|\Phi(n,m) \cP_{m,\ell}\|
    \le K \sum_{\ell=1}^{k} \sigma_{\ell}^{n-m} \le k  K
                             \sigma_{k}^{n-m}, \quad n<m. 
  \end{align*}
  (ii) \\
  From the ordering $\sigma_{k+1} < \sigma_{k}$ and the
  characterization \eqref{eq2:rangechar} we obtain 
  \begin{equation} \label{eq3:orderrange}
    \range(P_{m,k}^+)\subseteq \range(P_{m,\ell}^+) \text{ ,
      equivalently, } \quad P_{m,\ell}^+ P_{m,k}^+ = P_{m,k}^+ \;
    \text{for} \; 
     \ell \le k,
  \end{equation}
  i.e.\ projectors with a larger index annihilate those with smaller
  index from the right. 
  We verify the properties \eqref{eq3:prop1CED}-\eqref{eq3:prop3CED}
  for the operators $\cP_n^k$ from \eqref{eq3:defcP}: 
  \begin{align*}
    \sum_{k=1}^{\varkappa} \cP_{m,k}& =
                                      \sum_{k=1}^{\varkappa}P_{m,k-1}^+
                                      \cdots P_{m,0}^+ -
                                      \sum_{k=1}^{\varkappa}P_{m,k}^+ 
 \cdots P_{m,0}^+ \\
 &= P_{m,0}^+ -  P_{m,\varkappa}^+ \cdots P_{m,0}^+ = I - 0 =I.
  \end{align*}
  Further,   we have $\Phi(n,m)\cP_{m,k}= \cP_{n,k} \Phi(n,m)$ since
  all factors in the definition \eqref{eq3:defcP} 
  have this property.
  Now we use \eqref{eq3:orderrange} to show the orthogonality
  relations \eqref{eq3:prop1CED}. For $k > \ell$ we find with
  $P_{m,\ell}^-= I - P_{m,\ell}^+$ 
  \begin{align*}
    \cP_{m,k} \cP_{m,\ell} & = P_{m,k}^- P_{m,k-1}^+ \cdots P_{m,0}^+
                             P_{m,\ell-1}^+ \cdots P_{m,0}^+ 
    - P_{m,k}^- P_{m,k-1}^+ \cdots P_{m,0}^+ P_{m,\ell}^+ \cdots P_{m,0}^+ \\
  &=  P_{m,k}^-  P_{m,k-1}^+ \cdots P_{m,\ell}^+ P_{m,\ell-1}^+ \cdots P_{m,0}^+
    -P_{m,k}^- P_{m,k-1}^+ \cdots P_{m,\ell}^+ \cdots P_{m,0}^+=0.
  \end{align*}
  For $k=\ell$ the first term reproduces $\cP_{m,k}$ while the second term vanishes
  \begin{align*}
    \cP_{m,k} \cP_{m,k} & = P_{m,k}^- P_{m,k-1}^+ \cdots P_{m,0}^+
                          P_{m,k-1}^+ \cdots P_{m,0}^+ 
    - P_{m,k}^- P_{m,k-1}^+ \cdots P_{m,0}^+ P_{m,k}^+ \cdots P_{m,0}^+ \\
  &=  P_{m,k}^-  P_{m,k-1}^+  \cdots P_{m,0}^+
    -P_{m,k}^- P_{m,k}^+ \cdots P_{m,0}^+= \cP_{m,k}.
  \end{align*}
  For $k < \ell$ both terms vanish due to  $P_{m,k}^- P_{m,j}^+=0$ for
  $j=\ell-1,\ell$: 
  \begin{align*}
    \cP_{m,k} \cP_{m,\ell} & = P_{m,k}^- P_{m,k-1}^+ \cdots P_{m,0}^+
                             P_{m,\ell-1}^+ \cdots P_{m,0}^+ 
    - P_{m,k}^- P_{m,k-1}^+ \cdots P_{m,0}^+ P_{m,\ell}^+ \cdots P_{m,0}^+ \\
  &=  P_{m,k}^-  P_{m,\ell-1}^+  \cdots P_{m,0}^+
    -P_{m,k}^-  P_{m,\ell}^+ \cdots P_{m,0}^+=0-0.
  \end{align*}
  Finally, we use the bounds $\|P_{m,k}^+\|,\|P_{m,k}^-\| \le K$ of
  projectors and the dichotomy estimates 
  \begin{align*}
    \| \Phi(n,m) \cP_{m,k}\|& =\| P_{n,k}^- \Phi(n,m) P_{m,k-1}^+\cdots P_{m,0}^+\|
    \le K^2 \sigma_k^{n-m} \|P_{m,k-2}^+\cdots P_{m,0}^+\| \\
    & \le K^{k+1}\sigma_k^{n-m} \quad \text{for} \quad n \ge m,\\
    \| \Phi(n,m) \cP_{m,k}\|& = \| \Phi(n,m) P_{m,k}^- P_{m,k-1}^+
                              P_m^{k,1}\cdots P_{m,0}^+\| 
    \le K \sigma_k^{n-m}\| P_{m,k-1}^+\cdots P_{m,0}^+\| \\
    & \le K^{k+1} \sigma_k^{n-m} \quad \text{for} \quad n<m.
  \end{align*}

\end{proof}

\begin{remark} \label{rem3:prodproj}
 Note that the construction \eqref{eq3:defcP} of the fiber projectors
 as a product is more involved than \eqref{fiberproj} 
since only the first flag \eqref{Rhierarchy} holds automatically by
\eqref{eq3:orderrange} but not necessarily 
the second one \eqref{Nhierarchy}.
\end{remark}

\subsection{Perturbation theory for CEDs}
\label{sec3:3}
Results on perturbations of an exponential dichotomy have a long
tradition, beginning with the classical 
roughness theorems by Coppel \cite[Lecture 4]{co78} for ODEs. If the
perturbations are small in  $L^{\infty}$ 
then one obtains an exponential dichotomy with slightly weaker 
exponents (resp.\ rates) while $L^1$ perturbations 
allow to keep the exponents (resp.\ rates) for the perturbed system
\cite[Propositions 1 \& 2]{co78}. 
The roughness theorem below transfers this principle to GEDs for
systems in discrete time. 
From this a  roughness theorem for CEDs will then follow via
Proposition \ref{lem3:CEDspec}. 
Several roughness theorems for EDs in discrete time and even in
noninvertible systems are well-known in the literature; 
see e.g.\ \cite{ak01},\cite[Theorem 7.6.7]{he81},
\cite{BH04,P2012,PR16,P2018}. However, they all vary in one detail or 
another from the version with $\ell^1$-perturbations  below. For
example, preservation of the spectrum  follows under the weaker
condition of an $\ell^0$-perturbation \cite[Cor. 3.26]{P2012}, while
we need preservation of rates and  convergence of projectors 
at infinity in the resolvent set. For completeness we therefore
provide a proof in  Appendix \ref{app2b}. 

\begin{theorem}[Roughness of a  GED]\label{thm3:rough}
  Assume that the system \eqref{diffeq} has a GED on $J=\{n \in \Z:n \ge
  n_-\}$
with data $(K,\lambda_{\pm},P_n^{\pm})_{n\in J}$. Let
$E_n\in \R^{d,d}$, $n \in J$ be perturbations satisfying
\begin{equation} \label{eq3:condpert}
  \begin{aligned}
  q& \coloneqq K \lambda_{\star} \|E\|_{\ell^1(J)} < 1,  \text{ where}\\
  \|E\|_{\ell^1(J)}&= \sum_{n\in J}\|E_n\|, \quad
   \lambda_{\star}= \begin{cases} \lambda_+^{-1}, & 0< \lambda_+ <
     \lambda_- \le \infty, \\ 
          \lambda_-^{-1}, & 0=\lambda_+ < \lambda_- < \infty.
        \end{cases}
   \end{aligned}
  \end{equation}
Then the perturbed
equation \eqref{eq3:perturbsyst} has a GED on $J$ with data 
$(\tilde K,\lambda_{\pm},\tilde{P}_n^{\pm})_{n\in J}$, where
\begin{equation}\label{Qest}
 \sup_{n \in J} \|P_n^{\pm}-\tilde{P}_n^{\pm}\|  \le \frac{qK}{1-q}, \quad
  \tilde{K} = \frac{K}{1-q}.
\end{equation}
\end{theorem}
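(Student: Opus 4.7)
The natural route is the classical Coppel-style discrete variation-of-constants argument, adapted to the asymmetric rates $\lambda_\pm$ of a GED. I plan to introduce the Green's function $G(n,k+1) = \Phi(n,k+1) P_{k+1}^+$ for $n \geq k+1$ and $G(n,k+1) = -\Phi(n,k+1) P_{k+1}^-$ for $n < k+1$. For fixed $m \in J$ and $\xi \in \range(P_m^+)$, I would study the fixed-point equation
\begin{equation*}
v_n = \Phi(n,m) P_m^+ \xi + \sum_{k \geq m,\, k \in J} G(n,k+1) E_k v_k
\end{equation*}
on the Banach space $X_+ = \{v : \|v\|_+ := \sup_{n \geq m} \lambda_+^{-(n-m)} \|v_n\| < \infty\}$. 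The dichotomy estimates \eqref{eq2:dich+}, \eqref{eq2:dich-} together with the hypothesis \eqref{eq3:condpert} show that the linear operator $v \mapsto \sum_k G(\cdot,k+1) E_k v_k$ is a contraction of operator norm at most $q$, so Banach's principle yields a unique solution $v^\xi$ with $\|v^\xi\|_+ \leq (1-q)^{-1}\|P_m^+ \xi\|$. A direct verification using the jump relation at $n=k$ (which encodes $P_{n+1}^+ + P_{n+1}^- = I$) then confirms that $v^\xi$ satisfies the perturbed recursion $v_{n+1} = (A_n + E_n)v_n$.

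Next I plan to build the perturbed projector. Defining $\tilde P_m^+$ as the projector whose range is $\{v^\xi_m : \xi \in \range(P_m^+)\}$ and whose kernel is the symmetric object built from $\range(P_m^-)$ via the past-decaying solutions, the identification $v^\xi_n = \tilde\Phi(n,m)\tilde P_m^+ \xi$ gives the bound $\|\tilde\Phi(n,m)\tilde P_m^+\| \leq \tilde K \lambda_+^{n-m}$ with $\tilde K = K/(1-q)$, and the analogous estimate for $\tilde P_m^-$ follows by a mirror construction. Evaluating the fixed-point equation at $n=m$ yields
\begin{equation*}
(\tilde P_m^+ - P_m^+)\xi = v_m^\xi - P_m^+ \xi = -\sum_{k \geq m} \Phi(m,k+1) P_{k+1}^- E_k v^\xi_k,
\end{equation*}
whose norm is bounded by $qK/(1-q)\|\xi\|$, matching \eqref{Qest}. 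The invariance condition $\tilde\Phi(n,m)\tilde P_m^\pm = \tilde P_n^\pm \tilde\Phi(n,m)$ should follow from the shift-invariance of the fixed-point equation, i.e.\ uniqueness of the $X_+$-solution starting from the shifted initial datum.

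The main obstacle, I expect, is the transition from ``$v^\xi$ solves the perturbed system with the prescribed decay'' to ``$\tilde P_m^+$ and $\tilde P_m^-$ are genuine complementary projectors''. The splitting $\range(\tilde P_m^+) \oplus \range(\tilde P_m^-) = \R^d$ reduces to showing that the perturbed subspaces are graphs over the unperturbed ones with slope at most $q/(1-q) < 1$, which is itself a consequence of the same contraction estimate but requires careful bookkeeping. Finally, the degenerate cases $\lambda_+ = 0$ and $\lambda_- = \infty$ (cf.\ Remark \ref{rem2:extreme}) should be treated separately but briefly: in each case one projector is trivial and the argument collapses to a one-sided Neumann-series estimate applied either to the solution operator or to its inverse, with $\lambda_\star$ in \eqref{eq3:condpert} chosen accordingly.
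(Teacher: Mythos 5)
Your overall strategy — Green's function, contraction mapping in a weighted space, reading off the projectors — is the right one and is also what the paper does, but the technical route differs in an important respect, and your sketch has a gap at exactly the place you flag as the main obstacle.

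The paper does not construct $\range(\tilde P_m^+)$ and $\range(\tilde P_m^-)$ separately as sets of decaying vector solutions and then prove complementarity. Instead it sets up, for each fixed $m\in J$, a \emph{matrix-valued} two-sided fixed-point problem for the entire perturbed Green's function $\tilde G(\cdot,m)$ in the weighted space $Z_m$ whose norm $\|U_J\|_{m,\infty}=\max_\ell \Lambda_{\ell,m}\|U_\ell\|$ enforces decay at rate $\lambda_+$ for $n\ge m$ and decay at rate $\lambda_-$ for $n<m$ simultaneously, together with the boundary condition $P_{n_-}^+U_{n_-}=\delta_{n_-,m}P_{n_-}^+$ at the finite endpoint. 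Uniqueness in $Z_m$ is what does the heavy lifting: $\tilde P_m^+:=\tilde G(m,m)$ is shown to be a projector by exhibiting a second solution $Y(\cdot,m)\in Z_m$ built from $\tilde G(\cdot,m)\tilde P_m^+$ (resp.\ $\tilde G(\cdot,m)(I+\tilde P_m^+)$) and invoking uniqueness, and the invariance relation $\tilde\Phi(m+1,m)\tilde P_m^+=\tilde P_{m+1}^+\tilde\Phi(m+1,m)$ is obtained the same way. This avoids the direct-sum issue entirely and yields the result under the bare contraction condition $q<1$.

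In your version, the complementarity step as you sketch it does not go through. First, the constant is off: evaluating the fixed-point identity at $n=m$ gives $\|\tilde P_m^+-P_m^+\|\le qK/(1-q)$ (the factor $K$ comes from $\|\Phi(\cdot,m)P_m^+\xi\|_+\le K\|\xi\|$ in the free term), not $q/(1-q)$. Second, and more seriously, neither of these is $<1$ under the hypothesis $q<1$ alone (already $q>1/2$ with $K\ge 1$ defeats it), so the ``graph with small slope'' argument you propose requires a strictly stronger smallness condition than the theorem states. The algebraic fact that does hold without smallness — namely $P_m^+v_m^\xi=\xi$ for $\xi\in\range(P_m^+)$, since $P_m^+\Phi(m,k+1)P_{k+1}^-=0$, so $P_m^+|_{\range(\tilde P_m^+)}$ is an isomorphism onto $\range(P_m^+)$ — shows $\range(\tilde P_m^+)\oplus\range(P_m^-)=\R^d$, but this does not by itself give $\range(\tilde P_m^+)\oplus\range(\tilde P_m^-)=\R^d$; the product of the two slopes could still be $\ge 1$. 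Finally, even granting the projector exists, your bound on $\tilde P_m^+-P_m^+$ is only established on $\range(P_m^+)$, not on all of $\R^d$ as \eqref{Qest} requires; you would also need control on $\range(P_m^-)$, which again depends on the mirror construction being coupled to the first one. To repair the argument under $q<1$ you would essentially have to couple the two half-line constructions into a single two-sided weighted problem with a boundary condition at $n_-$ — which is precisely what the paper's $Z_m$ formulation does.
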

The following corollary holds for bounded rather than small
$\ell^1$-perturbations. 
\begin{corollary} \label{cor3:roughasymp}
    Let the system \eqref{diffeq} have a GED on $J$
    with data  $(K,\lambda_{\pm},P_n^{\pm})_{n\in J}$. Further assume
    that  the perturbation 
    satisfies $\|E_J\|_{\ell^1(J)}= \sum_{n\in J}\|E_n\|< \infty$, and let 
    $A_n+E_n$ be invertible for all $n \in \N_0$. Then there exist constants
    $\tilde{K}, C>0$ such that the perturbed system \eqref{eq3:perturbsyst}
    has a GED on $\N_0$ with data
    $(\tilde{K},\lambda_{\pm},\tilde{P}_n^{\pm})_{n \in J}$. 
    The perturbed projectors  satisfy
        \begin{equation} \label{eq3:estdiffproj}
          \sup_{n\in  J} \|P_n^{\pm}-\tilde{P}_n^{\pm}\| \le C \sum_{n
            \in J}\|E_n\|, \quad 
           P_n^{\pm}-\tilde{P}_n^{\pm} \to 0 \text{  as } n \to \infty.
        \end{equation}
   \end{corollary}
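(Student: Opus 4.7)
The plan is to reduce the bounded $\ell^1$-perturbation case to the small-perturbation Roughness Theorem \ref{thm3:rough} by truncating at a sufficiently late time, then extending the resulting GED backward to the full interval $J$ using the invertibility hypothesis. The asymptotic vanishing of $P_n^\pm - \tilde P_n^\pm$ will need a separate argument in which Lemma \ref{lem3:GEDprop} plays the decisive role.

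First I would choose $N \geq n_-$ large enough that $q := K\lambda_{\star} \sum_{n \geq N}\|E_n\| < 1$, which is possible because $\|E_J\|_{\ell^1(J)} < \infty$. Applying Theorem \ref{thm3:rough} on the shifted interval $J' = \{n \in \Z : n \geq N\}$ yields a GED of the perturbed system on $J'$ with the same rates $\lambda_\pm$, constant $\tilde K_0 = K/(1-q)$, and projectors $\tilde P_n^{\pm}$ ($n \geq N$) satisfying $\sup_{n \geq N}\|P_n^\pm - \tilde P_n^\pm\| \leq C_1 \sum_{n \geq N}\|E_n\|$. Next I extend these projectors to $n_- \leq n < N$ via $\tilde P_n^{\pm} := \tilde\Phi(n, N)\tilde P_N^\pm \tilde\Phi(N, n)$; this is well-defined by invertibility of $A_n + E_n$ on $J$. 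The GED estimates \eqref{eq2:dich+}, \eqref{eq2:dich-} extend to all of $J$ after enlarging $\tilde K$ to absorb the finitely many factors $\|\tilde\Phi(n, m)\|$ with $n, m \in [n_-, N]$.

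For the uniform bound $\sup_{n \in J}\|P_n^\pm - \tilde P_n^\pm\| \leq C \|E_J\|_{\ell^1(J)}$ the estimate on $n \geq N$ is already in hand, so it suffices to handle $n < N$. I would rewrite $\tilde P_n^\pm - P_n^\pm = \tilde\Phi(n,N)\tilde P_N^\pm\tilde\Phi(N,n) - \Phi(n,N)P_N^\pm\Phi(N,n)$, split off the contribution $\tilde\Phi(n,N)(\tilde P_N^\pm - P_N^\pm)\tilde\Phi(N,n)$, and control the remaining two terms via a discrete variation-of-constants identity for $\tilde\Phi - \Phi$, which produces a factor $\sum_{k < N}\|E_k\|$. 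Uniform boundedness of both flows on the finite block $[n_-, N]$ then yields the claim, possibly after enlarging $C$.

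For the asymptotic vanishing, given $\varepsilon > 0$ I would pick $M \geq N$ with $C_1 \sum_{k \geq M}\|E_k\| < \varepsilon/2$ and apply Theorem \ref{thm3:rough} once more on $\{k \geq M\}$, producing projectors $\tilde P_n^{\pm,M}$ with $\|P_n^\pm - \tilde P_n^{\pm,M}\| < \varepsilon/2$ for $n \geq M$. Since both $\tilde P_n^{\pm}$ and $\tilde P_n^{\pm,M}$ are projectors of GEDs of the perturbed system on $\{k \geq M\}$ with the same rates, Lemma \ref{lem3:GEDprop} applied to $\tilde\Phi$ gives
\[
\|\tilde P_n^\pm - \tilde P_n^{\pm,M}\| \leq \tilde K^2 \left(\frac{\lambda_+}{\lambda_-}\right)^{n-M}\|\tilde P_M^\pm - \tilde P_M^{\pm,M}\| \leq 2\tilde K^3 \left(\frac{\lambda_+}{\lambda_-}\right)^{n-M},
\]
which is $< \varepsilon/2$ for $n$ large enough. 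The main obstacle is this reconciliation step: the globally defined $\tilde P_n^\pm$ must be compared with projectors arising from arbitrarily deep truncations, and this hinges on the contractive factor $(\lambda_+/\lambda_-)^{n-M} < 1$ in Lemma \ref{lem3:GEDprop}, which has to be adapted to the limiting regimes $\lambda_+ = 0$ or $\lambda_- = \infty$ described in Remark \ref{rem2:extreme}.
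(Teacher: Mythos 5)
Your proof follows essentially the same route as the paper's own: truncate at a late time $N$ where the tail of $\|E\|_{\ell^1}$ is small so Theorem \ref{thm3:rough} applies, extend the resulting projectors backward to all of $J$ by conjugation with $\tilde\Phi$, and for the asymptotic vanishing compare the global $\tilde P_n^\pm$ against the projectors $\tilde P_n^{\pm,M}$ produced by an arbitrarily deep truncation at $M$, reconciling the two via the contractive estimate \eqref{eq2:estproj2} of Lemma \ref{lem3:GEDprop}. The decomposition of the triangle inequality, the role of the two invocations of Theorem \ref{thm3:rough}, and the use of the uniqueness-of-range lemma to make the two families of projectors converge are all the same as in the paper.
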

\begin{proof}
  Since $\|E_J\|_{\ell^1(J)}<\infty$ there exists $n_+\in J$  such
  that $q_+=K\lambda_{\star}\|E_{J_+}\|_{\ell^1(J_+)} <1$. 
  Then Theorem \ref{thm3:rough} applies to
  \eqref{eq3:perturbsyst} on $J_+=\{n\in \N:n  \ge n_+\}$ and 
  we obtain a  GED on $J_+$  with data
  $(K_+,\lambda_{\pm},\tilde{P}_n^{\pm})_{n \in J_+}$ which satisfies
  the estimate 
  \begin{align} \label{eq3:estJ+}
    \sup_{n \in J_+} \|P_n^{\pm}-\tilde{P}_n^{\pm}\| \le C_+ \sum_{n \in J_+}\|E_n\|, \quad
    C_+= \frac{K\lambda_{\star}}{1- q_+}.
  \end{align}
  Then we extend the GED  over a finite distance from $J_+$ to $J$ in
  a standard way by setting 
  $\tilde{P}_n^+=\tilde{\Phi}(n,n_+)\tilde{P}_{n_+}\tilde{\Phi}(n_+,n)$
  and by adapting the constant 
  $K_+$ to some $\tilde{K}$.
  This yields a GED on $J$ with data
  $(\tilde{K},\lambda_{\pm},\tilde{P}_n^{\pm})_{n \in J}$. 
  The estimate \eqref{eq3:estdiffproj} on $J$ follows from
  \eqref{eq3:estJ+} since the difference 
\begin{align*}
  \tilde{P}_n^+ -
  P_n^+=\tilde{\Phi}(n,n_+)\tilde{P}_{n_+}^+\tilde{\Phi}(n_+,n)-
  \Phi(n,n_+)P_{n^+}^+\Phi(n_+,n), 
  \quad n_- \le n < n_+ 
\end{align*}
can be bounded in terms of $\|E_n\|$, $n=n_-,\ldots,n_+$ and
$\|\tilde{P}_{n_+}^+-P_{n_+}^+\|$. 
 
   It remains to show
  $P_n^{\pm}-\tilde{P}_n^{\pm}\to 0$ as $n \to\infty$. For that purpose let
  us apply Theorem \ref{thm3:rough} again for arbitrary $N\ge n_+$ to
  $J_N=\{n\in J:n\ge N\}$ 
  with $q_N= K \lambda_{\star} \|E_{J_N}\|_{\ell_1(J_N)}\le  q_+ <1$.
  Then we we find another GED of \eqref{eq3:perturbsyst} with data
  $(K_N,\lambda_{\pm},P(N)_n^{\pm})_{n \in J_N}$ where the constant 
  $K_N=\frac{K}{1-q_N}\le \frac{K}{1-q_+}$ is uniformly bounded w.r.t.\ $N$
  and the estimate
  \begin{align*}
    \sup_{n \ge N}\|P_n^+ - P(N)_n^+\| \le \frac{K^2
    \lambda_{\star}}{1-q_+} \sum_{n  \ge N}\|E_n\| 
  \end{align*}
  holds. Now recall that \eqref{eq2:estproj2}  implies
    \begin{align*}
      \|\tilde{P}_n^+- P(N)_n^+\| \le \tilde{K}^2 \left(
      \frac{\lambda_+}{\lambda_-} \right)^{n-N} 
      \|\tilde{P}_N^+- P(N)_N^+\|, \quad n \ge N.
    \end{align*}
    By the triangle inequality we have for $n\ge N$
    \begin{align*}
      \|P_n^+ - \tilde{P}_n^+\| \le \frac{K^2 \lambda_{\star}}{1-q_+}
      \sum_{j \ge N}\|E_j\|+ 
      \tilde{K}^2 \left( \frac{\lambda_+}{\lambda_-} \right)^{n-N}
      \|\tilde{P}_N^+- P(N)_N^+\|.
    \end{align*}
    For a given $\varepsilon>0$ take $N=N_1$ so large that the first
    term is below $\frac{\varepsilon}{2}$ 
    and then $n \ge N_1$ so large that the second term is  below
    $\frac{\varepsilon}{2}$. This finishes the proof. 
    \end{proof}
 
\begin{theorem}[Roughness of a CED]\label{roughCED}
Assume that the difference equation \eqref{diffeq} has a CED  
with data $(K,\sigma_k,\cP_n^k)_{n\in J}^{k=1,\ldots,\varkappa}$ on a
one-sided interval $J=\{n \in \Z:n \ge n_-\}$.  Let 
  $(E_n)_{n\in J}$ be a perturbation such that $A_n+E_n$ is invertible for
  all $n\in J$ and such that $\| E\|_{\ell^1} < \infty$. Then there exist
  constants $\tilde{K}, C>0$ such that the perturbed system
  \eqref{eq3:perturbsyst} has a CED with data  
$(\tilde{K},\sigma_k, \tilde{\cP}_{n,k})_{n\in
  J}^{k=1,\ldots,\varkappa}$ and such that for all 
 $k=1,\ldots,\varkappa$ 
  \begin{equation} \label{eq3:CEDproj}
    \begin{aligned}
      \sup_{n \in J}\|\cP_{n,k}- \tilde{\cP}_{n,k}\| &\le C \sum_{\ell
        \in J} \|E_\ell\|, \quad 
      \lim_{n \to \infty}\cP_{n,k}- \tilde{\cP}_{n,k}=0.
    \end{aligned}
  \end{equation}
  \end{theorem}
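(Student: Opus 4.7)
The plan is to reduce the CED roughness statement to the GED roughness Corollary~\ref{cor3:roughasymp} through the equivalence between CEDs and families of GEDs given in Proposition~\ref{lem3:CEDspec}. First, I would apply Proposition~\ref{lem3:CEDspec}(i) to the unperturbed CED: for every cut index $k = 0, 1, \ldots, \varkappa$ this yields a GED on $J$ of \eqref{diffeq} with rates $\sigma_{k+1} < \sigma_k$ (using the conventions $\sigma_0 = \infty$, $\sigma_{\varkappa+1}=0$, together with Remark~\ref{rem2:extreme} for the two extreme cases) and projectors
$$
P_{n,k}^+ = \sum_{\ell = k+1}^{\varkappa} \cP_{n,\ell}, \qquad P_{n,k}^- = I - P_{n,k}^+.
$$
Since $A_n+E_n$ is invertible on $J$ and $\|E\|_{\ell^1(J)}<\infty$, Corollary~\ref{cor3:roughasymp} applies to each of these GEDs individually and produces corresponding GEDs of the perturbed system \eqref{eq3:perturbsyst} with the identical rates and projectors $\tilde P_{n,k}^{\pm}$ satisfying
$$
\sup_{n \in J}\|P_{n,k}^{\pm}-\tilde P_{n,k}^{\pm}\| \le C_k \sum_{n\in J}\|E_n\|, \qquad P_{n,k}^{\pm}-\tilde P_{n,k}^{\pm}\longrightarrow 0 \ \text{as } n\to\infty.
$$

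In the second step I would reassemble these $\varkappa+1$ perturbed GEDs into a single CED of \eqref{eq3:perturbsyst} by invoking Proposition~\ref{lem3:CEDspec}(ii). Its formula \eqref{eq3:defcP} gives the fiber projectors
$$
\tilde \cP_{n,k} = \tilde P_{n,k}^- \tilde P_{n,k-1}^+ \cdots \tilde P_{n,0}^+, \qquad k=1,\ldots,\varkappa,
$$
with the identical rates $\sigma_1 > \cdots > \sigma_\varkappa$, and with a constant $\tilde K$ built from the constants of Corollary~\ref{cor3:roughasymp}. This would already establish the first assertion of the theorem.

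Finally, for the projector estimate \eqref{eq3:CEDproj} I would expand $\cP_{n,k}-\tilde \cP_{n,k}$ as a telescoping sum over its $k+1$ factors: each summand contains exactly one difference $P_{n,j}^{\pm}-\tilde P_{n,j}^{\pm}$ flanked on the left by a product of perturbed projectors $\tilde P_{n,\ell}^{\pm}$ (for $\ell > j$) and on the right by a product of unperturbed projectors $P_{n,\ell}^{+}$ (for $\ell < j$). Since both the unperturbed and the perturbed GED projectors are uniformly bounded (by $K$ and $\tilde K$ respectively), substituting the GED estimates above yields the uniform bound $\|\cP_{n,k}-\tilde \cP_{n,k}\|\le C \sum_{\ell\in J}\|E_\ell\|$, and the pointwise convergence $P_{n,j}^{\pm}-\tilde P_{n,j}^{\pm}\to 0$ delivers $\cP_{n,k}-\tilde \cP_{n,k}\to 0$ as $n\to\infty$.

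The analytic weight is really carried by Corollary~\ref{cor3:roughasymp}; the present theorem is essentially its projector-by-projector bookkeeping. The one place where I would be most careful is ensuring that the perturbed projector families $\tilde P_{n,k}^{\pm}$ obtained for different values of $k$ are mutually consistent in the nested sense demanded by the hypotheses of Proposition~\ref{lem3:CEDspec}(ii). This consistency is in fact automatic: the range characterization \eqref{eq2:rangechar} determines $\range(\tilde P_{n,k}^+)$ uniquely from the rate $\sigma_{k+1}$, so the flag \eqref{Rhierarchy} for the perturbed projectors follows for free, and the proof of Proposition~\ref{lem3:CEDspec}(ii) then applies verbatim. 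I do not expect any additional obstacle beyond this verification.
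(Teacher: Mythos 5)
Your proposal follows essentially the same route as the paper: unpack the CED into a family of GEDs via Proposition~\ref{lem3:CEDspec}(i), apply Corollary~\ref{cor3:roughasymp} to each, reassemble the perturbed GEDs into a CED via Proposition~\ref{lem3:CEDspec}(ii), and control $\cP_{n,k}-\tilde\cP_{n,k}$ by a telescoping sum. The one point the paper spells out that you use only implicitly is that the \emph{unperturbed} fiber projectors also satisfy the product formula, $\cP_{n,k}=P_{n,k}^-P_{n,k-1}^+\cdots P_{n,0}^+$, which is needed for the telescope to compare like with like; this is a one-line consequence of the orthogonality relations \eqref{eq3:prop1CED}, so the gap is purely presentational.
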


\begin{proof}
  By Proposition \ref{lem3:CEDspec}(i), system \eqref{diffeq} has GEDs
  with data  
  $(K,\sigma_{k+1},\sigma_k,P_{n,k}^{\pm})_{n \in J}$ where $P_{n,k}^+
  =\sum_{\ell=k+1}^{\varkappa}\cP_{n,\ell}$, 
  $P_{n,k}^- =I-P_{n,k}^+=\sum_{\ell=1}^{k}\cP_n^{\ell}$ for
  $k=0,\ldots,\varkappa$  and $\sigma_0=0$, 
  $\sigma_{\varkappa +1} = \infty$. The Roughness Theorem
  \ref{thm3:rough} for GEDs and Corollary 
  \ref{cor3:roughasymp} ensure that the perturbed system
  \eqref{eq3:perturbsyst} has GEDs 
  with data
  $(\tilde{K},\sigma_{k+1},\sigma_k,\tilde{P}_{n,k}^{\pm})_{n \in J}$
  for $k=0,\ldots,\varkappa$. 
  Moreover, by \eqref{eq3:estdiffproj} we have 
  \begin{equation} \label{eq3:diffk}
    \sup_{n \in J}\| P_{n,k}^+-\tilde{P}_{n,k}^+\| \le \tilde{C}
    \sum_{n \in J} \| E_{\ell}\|, \quad 
      \lim_{n \to \infty}P_{n,k}^+-\tilde{P}_{n,k}^+=0, \quad
      k=1,\ldots,\varkappa. 
  \end{equation}
  Now we apply Proposition \ref{lem3:CEDspec} (ii) to the perturbed
  system and obtain that 
  \eqref{eq3:perturbsyst} has a CED on $J$ with data
  $(\tilde{K},\sigma_k,\tilde{\cP}_{n,k})_{n\in
    J}^{k=1,\ldots,\varkappa}$ 
  and fiber projectors given by \eqref{eq3:defcP}
  \begin{align*}
   \tilde{\cP}_{n,k}= \tilde{P}_{n,k}^-\tilde{P}_{n,k-1}^+ \cdots
    \tilde{P}_{n,0}^+,  \quad k=1, \ldots, \varkappa. 
  \end{align*}
  Next we observe that the unperturbed fiber projectors satisfy the
  same relations due to the orthogonality 
  conditions \eqref{eq3:prop1CED}: 
   \begin{align*}
    P_{n,k}^- P_{n,k-1}^+\cdots P_{n,0}^+&= \Big(\sum_{\ell=1}^{k}\cP_{n,\ell}\Big)
       \Big( \sum_{\ell =k}^{\varkappa} \cP_{n,\ell}\Big) P_{n,k-2}^+
                                           \cdots P_{n,0}^+ \\ 
      & = \cP_{n,k} \Big( \sum_{\ell =k-1}^{\varkappa}
        \cP_{n,\ell}\Big) \cdots P_{n,0}^+= \cdots =\cP_{n,k}, \quad
        k=1, \ldots,\varkappa. 
  \end{align*}
   Using \eqref{eq3:diffk} and  the boundedness of projectors,  a
   telescope sum then leads to 
   \begin{align*}
     \sup_{n \in J}\| \cP_{n,k} - \tilde{\cP}_{n,k}\| \le C \sum_{n
     \in J} \|E_{\ell}\|, \quad 
     \lim_{n  \to \infty}\cP_{n,k}- \tilde{\cP}_{n,k}=0 \text{ for }
     k=1,\ldots,\varkappa. 
     \end{align*}
\end{proof}

For the final step we relate CEDs and kinematic transformations:
\begin{theorem} \label{thm3:CED}
  Assume that the systems \eqref{diffeq} resp.\
  \eqref{eq3:perturbsyst} have CEDs with 
  data \\ $(K,\sigma_k,\cP_n^k)_{n\in J}^{k=1,\ldots,\varkappa}$ resp.\
  $(\tilde{K},\sigma_k,\tilde{\cP}_n^k)_{n\in J}^{k=1,\ldots,\varkappa}$
 and that the following properties hold:
  \begin{equation} \label{eq3:projasym}
   \lim_{n \to \infty} \cP_{n,k}- \tilde{\cP}_{n.k}=0, \quad
   k=1,\ldots,\varkappa, 
    \end{equation}
  \begin{equation} \label{eq3:perturbsmall}
    \|E_J\|_{\ell^1}=\sum_{n\in J} \| E_n\| < \infty.
  \end{equation}
  Then the systems  \eqref{diffeq} and \eqref{eq3:perturbsyst} are
  kinematically 
  similar with transformations $Q_n\in \mathrm{GL}(\R^d)$, $n \in J$
  satisfying $\lim_{n \to \infty}Q_n=I$. 
\end{theorem}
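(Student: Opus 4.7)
My plan is to construct the transformation $Q_n$ block by block, exploiting the crucial fact that on each spectral fiber both $\Phi$ and $\tilde\Phi$ contract/dilate at exactly the same rate $\sigma_k$. For each $k\in\{1,\ldots,\varkappa\}$ and $n\in J$ I would define
$$T_n^{(k)} \coloneqq \lim_{m\to\infty}\tilde\Phi(n,m)\,\tilde\cP_{m,k}\,\Phi(m,n)\,\cP_{n,k}, \qquad Q_n \coloneqq \sum_{k=1}^{\varkappa}T_n^{(k)}.$$
Heuristically, $T_n^{(k)}$ transports an element of $\range(\cP_{n,k})$ by the unperturbed flow, projects onto the perturbed fiber, and pulls back by the perturbed flow; the two-sided rate-$\sigma_k$ CED bounds for both systems keep this composition bounded in $m$ and let me average out the discrepancy between them.

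To justify the limit I would compute the successive differences. Using \eqref{eq3:prop2CED} for $\Phi$ and $\tilde\Phi$ together with $\tilde A_m - A_m = E_m$, a short manipulation yields
$$\tilde\Phi(n,m+1)\tilde\cP_{m+1,k}\Phi(m+1,n)\cP_{n,k} - \tilde\Phi(n,m)\tilde\cP_{m,k}\Phi(m,n)\cP_{n,k} = -\tilde\Phi(n,m+1)\tilde\cP_{m+1,k}E_m\Phi(m,n)\cP_{n,k}.$$
Applying \eqref{eq3:prop3CED} to both flow factors at the common rate $\sigma_k$ bounds the norm of this difference by $\tilde K K\sigma_k^{-1}\|E_m\|$, uniformly in $n$. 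By the summability assumption \eqref{eq3:perturbsmall} the sequence is Cauchy in operator norm, so $T_n^{(k)}$ is well defined. The intertwining identity $Q_n\Phi(n,m)=\tilde\Phi(n,m)Q_m$ then follows by substituting the defining limit of $T_m^{(k)}$ and simplifying via $\tilde\Phi(n,m)\tilde\Phi(m,\ell)=\tilde\Phi(n,\ell)$, $\Phi(\ell,m)\Phi(m,n)=\Phi(\ell,n)$, and $\cP_{m,k}\Phi(m,n)=\Phi(m,n)\cP_{n,k}$; after reindexing both sides reduce to $\sum_k\lim_{\ell\to\infty}\tilde\Phi(n,\ell)\tilde\cP_{\ell,k}\Phi(\ell,m)\cP_{m,k}$.

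Finally, to show $Q_n\to I$, the same telescope identity started at $m=n$ gives
$$T_n^{(k)} = \tilde\cP_{n,k}\cP_{n,k} - \sum_{m\ge n}\tilde\Phi(n,m+1)\tilde\cP_{m+1,k}E_m\Phi(m,n)\cP_{n,k},$$
where the tail has norm bounded by $\tilde K K\sigma_k^{-1}\sum_{m\ge n}\|E_m\|\to 0$, while $\tilde\cP_{n,k}\cP_{n,k}-\cP_{n,k}=(\tilde\cP_{n,k}-\cP_{n,k})\cP_{n,k}\to 0$ by \eqref{eq3:projasym}. Summing over $k$ and using $\sum_k\cP_{n,k}=I$ yields $Q_n\to I$, so $Q_n\in\mathrm{GL}(\R^d)$ for all sufficiently large $n$, and invertibility propagates to every $n\in J$ via $Q_{n+1}=\tilde A_n Q_n A_n^{-1}$ (shrinking $J$ at the left end if necessary, which is allowed by Remark \ref{rem3:assume}). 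The main technical point is precisely the uniform-in-$n$ bound on the telescope summands: it works only because the factors $\|\tilde\Phi(n,m+1)\tilde\cP_{m+1,k}\|$ and $\|\Phi(m,n)\cP_{n,k}\|$ grow/decay at \emph{the same} rate $\sigma_k$, so their product is independent of $n,m$ and only $\|E_m\|$ remains. This is exactly what the CED structure provides, and the step would fail for a mere pure-point dichotomy spectrum.
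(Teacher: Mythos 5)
Your proposal is correct and is essentially the same construction as the paper's: your $Q_n=\sum_k T_n^{(k)}$ coincides, after commuting $\cP_{N,k}$ through $\Phi(N,j)$, with the paper's $Q_j=\lim_{N\to\infty}\tilde\Phi(j,N)\big(\sum_k\tilde\cP_{N,k}\cP_{N,k}\big)\Phi(N,j)$, and the telescoping difference estimate, the intertwining check, and the proof of $Q_n\to I$ all mirror the paper's argument step for step.
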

\begin{proof}
  For every fixed $N \in J$ we define the transformations
  \begin{equation} \label{eq3:transQ}
    Q^N_j = \tilde{\Phi}(j,N)\Big(
    \sum_{k=1}^{\varkappa}\tilde{\cP}_{N,k} \cP_{N,k} \Big) \Phi(N,j),
    \quad j \in J. 
  \end{equation}
  They satisfy the relations \eqref{relatePhi2}, hence form a
  kinematic transformation 
  \begin{equation} \label{eq3:funcrel}
    \begin{aligned}
      \tilde{\Phi}(n,m)Q_m^N& =
      \tilde{\Phi}(n,m)\tilde{\Phi}(m,N)\Big(
      \sum_{k=1}^{\varkappa}\tilde{\cP}_{N,k} \cP_{N,k} \Big)
      \Phi(N,m)  \\ 
      & = \tilde{\Phi}(n,N)\Big(
      \sum_{k=1}^{\varkappa}\tilde{\cP}_{N,k} \cP_{N,k} \Big)
      \Phi(N,n) \Phi(n,m)  = Q_n^N \Phi(n,m). 
    \end{aligned}
  \end{equation}
  Next we show that $Q_j^N$ converges as $N \to \infty$ 
  to some $Q_j$ uniformly in $j$.
  We verify the Cauchy property for indices $N \ge M$ using a telescope sum:
  \begin{align*}
    Q_j^N -Q_j^M & = \tilde{\Phi}(j,M) \Big[ \tilde{\Phi}(M,N)
      \sum_{k=1}^{\varkappa}\tilde{\cP}_{N,k} \cP_{N,k} \Phi(N,M) -
      \sum_{k=1}^{\varkappa}\tilde{\cP}_{M,k} \cP_{M,k} \Big] \Phi(M,j)\\
    &=  \sum_{k=1}^{\varkappa}\tilde{\Phi}(j,M) \tilde{\cP}_{M,k} \big[
      \tilde{\Phi}(M,N)\Phi(N,M)- I\big] \cP_{M,k} \Phi(M,j)\\
    & = \sum_{k=1}^{\varkappa}\tilde{\Phi}(j,M)\tilde{\cP}_{M,k} T(N,M)
    \cP_{M,k} \Phi(M,j), \quad {where} \\
    T(N,M)&= 
    \sum_{\ell=M+1}^N \tilde{\cP}_{M,k} \tilde{\Phi}(M,\ell)
    \big[\Phi(\ell,\ell-1)- \tilde{\Phi}(\ell,\ell-1)\big]
    \Phi(\ell-1,M)\cP_{M,k}\\
    & = \sum_{\ell=M+1}^N \tilde{\cP}_{M,k} \tilde{\Phi}(M,\ell)
    (-E_{\ell-1})
    \Phi(\ell-1,M)\cP_{M,k}.
  \end{align*}
  With the CED-estimates we arrive at
  \begin{align*}
    \|Q_j^N -Q_j^M\| & \le \sum_{k=1}^{\varkappa} \tilde{K}
                       \sigma_k^{j-M} \sum_{\ell=M+1}^N \tilde{K}
                       \sigma_k^{M-\ell} \|E_{\ell-1}\| K
                       \sigma_k^{\ell -1 -M} K \sigma_k^{M-j} \\ 
    & = \tilde{K}^2K^2 \sum_{k=1}^{\varkappa} \sigma_k^{-1}
      \sum_{\ell=M+1}^N \|E_{\ell-1}\|, 
  \end{align*}
  which by \eqref{eq3:perturbsmall} is below a  given $\varepsilon>0$
  uniformly in $j$ 
  for $M$ sufficiently large.
  Define $Q_j = \lim_{N \to \infty}Q_j^N$, $j \in J$. Taking the limit
  $N \to \infty$ in \eqref{eq3:funcrel} leads to
  \begin{align} \label{eq3:funcQrel}
    \tilde{\Phi}(n,m) Q_m = Q_n \Phi(n,m), \quad n,m \in J.
  \end{align}
  Further, we obtain from \eqref{eq3:transQ} the bound
  \begin{align*}
    \| Q_j^N\| \le \sum_{k=1}^{\varkappa} \tilde{K} \sigma_k^{j-N} K \sigma_k^{N-j}
    = \tilde{K} K \varkappa,
  \end{align*}
  so that $Q_j$ has the same bound. Next we show $Q_j \to I$ as $j\to \infty$.
  From \eqref{eq3:transQ} and \eqref{eq3:prop1CED},
  \eqref{eq3:prop3CED} we find 
  \begin{align*}
    \| Q_N^N - I\|& = \|\sum_{k=1}^{\varkappa}\tilde{\cP}_{N,k} \cP_{N,k} -
    \sum_{k=1}^{\varkappa} \cP_{N,k} \cP_{N,k}\| \le K
                    \sum_{k=1}^{\varkappa}\|\tilde{\cP}_{N,k}-\cP_{N,k}\|, 
  \end{align*}
  which converges to zero by \eqref{eq3:projasym}.
  Given $\varepsilon>0$ take $N(\varepsilon)$ such that for all
  $N \ge N(\varepsilon)$
  \begin{align*}
    \| Q_N^N - I\|\le \frac{\varepsilon}{2}, \quad \sup_{j \in J}
    \|Q_j^N - Q_j\| \le \frac{\varepsilon}{2}.
  \end{align*}
  Then we obtain for $N \ge N(\varepsilon)$
  \begin{align*}
    \|Q_N- I\|& \le \|Q_N - Q_N^N\| + \| Q_N^N - I\| \le
                \frac{\varepsilon}{2}+ 
    \frac{\varepsilon}{2} = \varepsilon.
  \end{align*}
  In particular, $Q_N$ is invertible and its inverse is bounded for
  $N \ge N_0$. By the relation \eqref{eq3:funcQrel} the invertibility
  and boundedness extends to finite indices  $N \le N_0$. Thus
  $(Q_j)_{j \in J}$ is a kinematic similarity transformation of the 
  systems \eqref{diffeq} and \eqref{eq3:perturbsyst} as claimed. 
\end{proof}

\subsection{Proof of Theorem \ref{thm3:maininv}}\label{roughproof}

\begin{proof}
We show that the system \eqref{diffeq}
  has pure point dichotomy spectrum
  $\Sigma_{\mathrm{ED}}= \{\sigma_1,\ldots, \sigma_{\varkappa}\}$ with
  associated 
  fiber bundle $\mathcal{W}_n^k= \range(\cP_n^k)$, $k=1,\ldots,\varkappa$.
  By Proposition \ref{lem3:CEDspec}(i) the intervals
  $(\sigma_{k+1},\sigma_k)$, $k=0,\ldots, \varkappa$ belong to 
  the resolvent set $R_{\ED}$ in \eqref{eq2:defres}. Hence  we have
  $\Sigma_{\ED} \subseteq \{\sigma_1,\ldots, \sigma_{\varkappa}\}$. 
  In fact, each value $\sigma_k$ belongs to the spectrum since the
  projectors $\cP_{n,k}$ are nontrivial and the rate 
  estimate \eqref{eq3:estboth} shows that $\sigma_k$ cannot lie in the
  interior of a GED interval. 

    By Theorem \ref{roughCED} we have proved the estimate \eqref{eq3:projest} and
    the statement about the unperturbed 
    dichotomy spectrum follows as above. 
    Further, Theorem \ref{roughCED} shows that the assumptions on the projectors $\cP_{n,k}$ in
    Theorem \ref{thm3:CED} are satisfied. Thus we conclude that
    the systems \eqref{diffeq} and \eqref{eq3:perturbsyst} are kinematically
    similar. Then Proposition \ref{prop3:inv} applies and yields the persistence of
    the outer angular spectrum. This finishes the proof of Theorem
    \ref{thm3:maininv}. 
\end{proof}
\subsection{Illustrative examples}

    The following simple example shows that the summability condition of the
    perturbations in Theorem \ref{thm3:CED} is rather sharp.
    \begin{example} \label{ex3:counter1}
      Consider for $a >0$ the scalar systems
      \begin{equation} \label{eq3:exsimple}
        u_{n+1}=a u_n, \quad v_{n+1}=a(1+\frac{1}{n})v_n.
      \end{equation}
      We claim that they are not kinematically similar with transformations $Q_n$,
      for which $Q_n$ and $Q_n^{-1}$ are uniformly bounded.
      Assuming the converse, there exists a subsequence $\N'\subseteq \N$ such
      that $\lim_{\N' \ni n \to \infty}Q_n \neq 0$. From the
      similarity property we infer 
      \begin{align*}
        Q_{n-1} &= a^{-1} Q_{n}\frac{an}{n-1}=\frac{n}{n-1}Q_{n}, \\
        |Q_1|&= |nQ_{n}| \to \infty \text{ as } \N' \ni n \to \infty,
      \end{align*}
      a contradiction.
     It is not difficult to
      extend this example to hyperbolic matrices of higher dimension.
    \end{example}

\begin{example} \label{ex3:henon3}
We consider a three-dimensional variant of H\'enon's map
\begin{equation}\label{Hmap}
F:\begin{array}{rcl} \R^3&\to & \R^3\\
x & \mapsto &
\begin{pmatrix}
-x_1^2 - \frac 9{10}x_3 + \frac 75\\
x_1 \cos(\omega) - x_2 \sin(\omega)\\
x_2 \cos(\omega) + x_1 \sin(\omega)
\end{pmatrix}
\end{array}
\quad \text{with}\quad \omega = 0.2.
\end{equation}
The map $F$ has the fixed point $\xi \approx \begin{pmatrix} 0.5674 & 
0.4639 & 0.5674\end{pmatrix}^\top$ and the Jacobian $DF(\xi)$
possesses the unstable eigenvalue $-1.4736$ and the stable pair 
$0.0701 \pm 0.7784i$. 

Of particular interest are homoclinic orbits $(\bar x_n)_{n\in\Z}$
w.r.t.\ the fixed point $\xi$,
i.e.\ $\bar x_{n+1} = F(\bar x_n)$, $n\in\Z$, $\lim_{n\to \pm \infty}
\bar x_n = \xi$. 
On the finite interval $[-10^3,10^3]\cap \Z$ we compute a numerical
approximation by solving a periodic boundary value problem. The center part
$(\bar x_n)_{n\in[-20,20]\cap \Z}$ is shown in the left panel of
Figure \ref{Henon}. In addition, the right panel shows approximations
of the one-dimensional unstable manifold (red) and of the two-dimensional
stable manifold that is computed using the contour algorithm from
\cite{h16}. 

For the half-sided variational equation 
\begin{equation}\label{variational}
u_{n+1} = DF(\bar x_n)u_n,\quad n \in \N
\end{equation}
we aim for its angular spectrum. Exploiting the results from above, we
start with the autonomous system
\begin{equation}\label{var_auto}
u_{n+1} = DF(\xi)u_n,\quad n\in \N
\end{equation}
and analyze its first angular spectrum.
Trace spaces of \eqref{var_auto} are the unstable eigenspace and each
one-dimensional subspace of the two-dimensional stable
eigenspace. The unstable eigenspace results in the
spectral value $0$. For getting the whole spectrum, we separate the
dynamics within the two-dimensional stable eigenspace, using a
reordered Schur decomposition, see \cite[Algorithm 6.1]{BeFrHu20} and
apply 
Example \ref{ex:normal2}. It turns out that $\mathrm{sk}(\rho,\varphi)
\approx 1.062 > 1$ and $\frac \varphi \pi \notin \Q$, thus the
explicit representation \eqref{finalform} gives the first outer
angular spectrum 
\begin{equation}\label{Henspec1}
\Sigma_1 = \{0, 1.33566342\}.
\end{equation} 

For the second angular spectrum we can employ Example
\ref{ex6:mixed} since $DF(\xi)$ has a stable complex eigenvalue
and a real unstable eigenvalue. We transform $DF(\xi)$ into
\eqref{eq6:auto3} with $A = \left(\begin{smallmatrix}
-0.08965 &  -1.42890 &  -0.70779\\
0.69421& -0.08965 &  -0.33648\\
0&0&1.88559\end{smallmatrix}\right)$ using a reordered Schur
decomposition, scaling and an orthogonal similarity
tranformation. The second outer angular spectrum is then computed from
formula \eqref{eq6:Sigma2} via numerical integration
\begin{equation}\label{Henspec2}
\Sigma_2 = \{0, 1.32818438\}.
\end{equation} 

In the following we show that the angular spectra of
\eqref{variational} and 
of \eqref{var_auto} coincide. By Example \ref{ex3:semisimple} we observe
that \eqref{var_auto} has a CED on $\N$. Furthermore, $DF(\bar x_n)$
is invertible for all $n\in\N$ and the perturbation 
$E_\N= (DF(\bar x_n)-DF(\xi))_{n\in\N}$ satisfies the summability
condition \eqref{eq3:L1norm} due to the exponentially fast convergence
of the orbit $(\bar x_n)_{n\in\N}$ towards the fixed point $\xi$. 
Thus, Theorem \ref{thm3:maininv} yields the claim.

Note that in the numerical experiments in Section \ref{Sec_num}, we
apply our algorithm to both systems \eqref{var_auto} and
\eqref{variational} and compare the results. 

\begin{figure}[hbt]
\begin{center}
\includegraphics[width=0.95\textwidth]{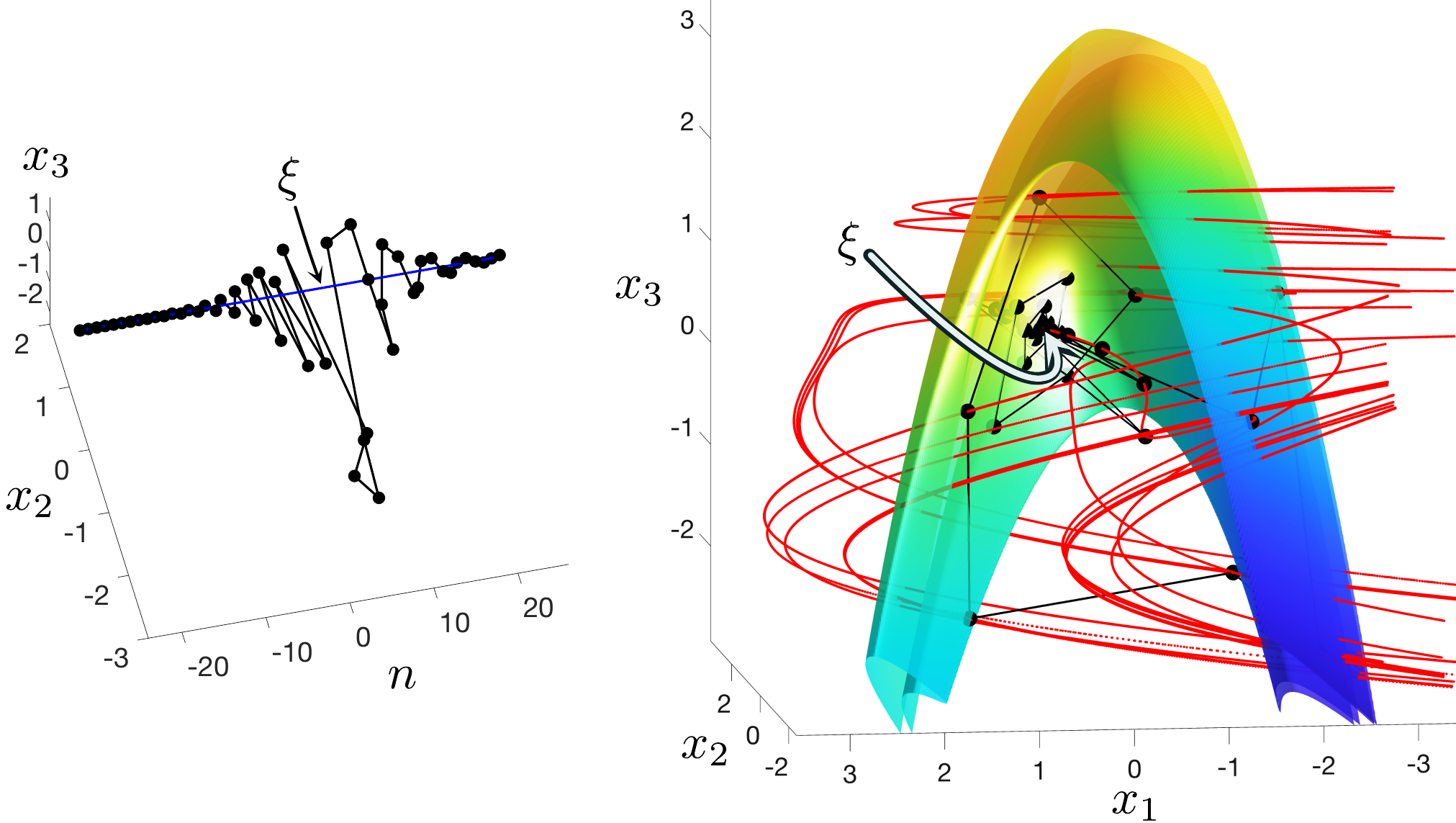}   
\end{center}
\caption{\label{Henon}Center part of a homoclinic orbit of
  \eqref{Hmap} (left). The same orbit in phase space (right) with
  approximations of unstable (red) and stable manifolds of $\xi$.}  
\end{figure}
\end{example}
    
\subsection{Almost periodicity and outer angular spectrum}
The property of almost periodicity is a well-known concept to analyze
the long time behavior 
of sequences which converge to some periodic orbit or to some ergodic
motion on a circle; see 
\cite[Ch.4.1]{P89} for the general theory. In \cite{BeFrHu20} we have
used this property to establish the existence 
and the equality of various angular values from Definition
\ref{defangularvalues}. In the following we study the sequence of maps 
\begin{equation} \label{eq3:bndef}
  b_n \colon \cG(s,d) \to \R, \quad b_n(V) = \ang(\Phi(n-1,0)V,\Phi(n,0)V),
\end{equation}
from which the averages $\alpha_n= \frac{1}{n} \sum_{j=1}^n b_j(V)$ in
\eqref{def:alpha} are derived. 
For example, we have shown in \cite[Lemma 3.6, Proposition
3.7]{BeFrHu20} that uniform almost periodicity (Uap) 
  of the sequence $b_n$ implies the Cauchy property for $\alpha_n$
  uniformly in $V$. The Uap notion is 
  slightly weaker than in \cite[Ch.4.1]{P89}  
 and is further weakened in the following definition.

\begin{definition}\label{auap2}
Given a set $\cV$ and a metric space $(\cW,d)$. A sequence of mappings
$b_n: \cV\to \cW$, $n \in \N$ is called 
\begin{itemize}
\item[(i)] \textbf{asymptotically uniformly almost periodic} (AUap) if
\begin{equation}\label{asymp}
\begin{aligned}
&\forall \eps >0\ \exists P\in\N\ \exists L\in\N: \forall V \in \cV\
                 \forall \ell\in\N\ \exists
p\in\{\ell,\dots,\ell+P\}: \\
&\forall n \ge L: d(b_n(V),b_{n+p}(V))\le \eps.
\end{aligned}
\end{equation}
\item[(ii)] \textbf{uniformly almost periodic} (Uap) if it is AUap
  with $L = 1$. 
\end{itemize}
\end{definition}
\begin{remark} Recall from \cite[Ch.4.1]{P89} the standard definition
  of uniform almost periodicity for $b_n$: for every $\varepsilon>0$
  there exists a relatively dense 
  set $\cP\subset \N$ such that $|b_{n}(V)-b_{n+p}(V)| \le
  \varepsilon$ for all $n\in \N, p\in \cP,V \in \cV$; the set $\cP$ is
  relatively dense 
  iff  there exists $P>0$ such that
  $\cP \cap \{\ell,\ldots,\ell+P\}\neq \emptyset$ for all $\ell \in
  \N$. Recall from \cite[Remark 3.5]{BeFrHu20} that Uap 
  is weaker since we allow $p \in \{\ell,\ldots,\ell +P\}$ to depend
  on $V \in \cV$,  and note that 
  AUap is still weaker since  the estimate holds for $n \ge L$ only.
   \end{remark}
Definition \ref{auap2} applies to dynamical systems of the form
\eqref{diffeq} with the setting $\cV =\cW = \cG(s,d)$,  $b_n(V) =
\Phi(n,0)V$ 
and the  metric $\ang(\cdot,\cdot)$, but likewise to $\cV=\cG(s,d)$,
$W=\R$ and 
$b_n(V)= \ang(\Phi(n,0)V,\Phi(n-1,0)V)$ for  $n \ge 1$. 

First note that (A)Uap carries over from subspaces to 
angles of successive subspaces. 
\begin{lemma}\label{spacetoang}
  If $\varphi_n:\cG(s,d) \to \cG(s,d)$, $\varphi_n(V)= \Phi(n,0)V$ ($n
  \in \N_0$) is (A)Uap in the metric space 
  $(\cG(s,d),\ang(\cdot,\cdot))$, then the sequence of mappings 
$$
b_n:\cG(s,d) \to \R, \quad b_n(V)= \ang(\Phi(n-1,0)V,\Phi(n,0)V),\quad
n\in\N 
$$
is (A)Uap.
\end{lemma}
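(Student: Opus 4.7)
The argument reduces to a single triangle-inequality manipulation once one exploits that $\ang(\cdot,\cdot)$ is itself a metric on $\cG(s,d)$ (see the remark following \eqref{metric}). Applying the triangle inequality twice yields the quadrilateral estimate
\[
|\ang(A,B) - \ang(C,D)| \le \ang(A,C) + \ang(B,D), \quad A,B,C,D \in \cG(s,d),
\]
which is the only nontrivial geometric input. Everything else is bookkeeping in the (A)Uap definition.

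Given $\varepsilon>0$, the plan is to apply the (A)Uap hypothesis for $(\varphi_n)$ with tolerance $\varepsilon/2$ to obtain a period bound $P$ (and, in the AUap case, a threshold $L$) such that for every $V \in \cG(s,d)$ and every $\ell\in \N$ there exists $p\in\{\ell,\ldots,\ell+P\}$ with
\[
\ang(\varphi_n(V),\varphi_{n+p}(V))\le \varepsilon/2 \quad \text{for all } n\ge L,
\]
where $L=1$ in the Uap case. Note that the $p$ chosen here depends on $V$ and $\ell$ but is independent of $n$, so the \emph{same} $p$ can be reused for two successive indices $n-1$ and $n$.

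With this $p$ fixed, the quadrilateral inequality applied to the four subspaces $A=\varphi_{n-1}(V)$, $B=\varphi_n(V)$, $C=\varphi_{n+p-1}(V)$, $D=\varphi_{n+p}(V)$ then yields
\[
|b_n(V)-b_{n+p}(V)| \le \ang(\varphi_{n-1}(V),\varphi_{n+p-1}(V)) + \ang(\varphi_n(V),\varphi_{n+p}(V)) \le \varepsilon
\]
for all $n\ge L+1$. This establishes the AUap property of $(b_n)$ with threshold $L+1$ and the same period bound $P$.

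The only delicate point is the index shift $n\mapsto n-1$, which bumps the threshold for $(b_n)$ by one relative to that of $(\varphi_n)$. In the Uap case ($L=1$) this would naively leave out the single index $n=1$; this is handled either by using $\varphi_0 = \mathrm{id}$ (so that $\ang(\varphi_0(V),\varphi_p(V))$ is controlled by the $n=0$ instance of the Uap estimate if one extends the index set, which is standard) or by enlarging the period $P$ to $P+1$ and shifting. Since this is a purely notational matter, I expect no substantive obstacle.
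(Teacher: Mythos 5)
Your proof is correct and follows essentially the same route as the paper: apply the (A)Uap hypothesis for $(\varphi_n)$ with tolerance $\varepsilon/2$, keep the \emph{same} shift $p$ (crucially independent of $n$) for the two neighboring indices $n-1$ and $n$, and conclude via the quadrilateral estimate derived from the triangle inequality for the metric $\ang(\cdot,\cdot)$. The only difference is that you track the threshold shift from $L$ to $L+1$ explicitly, whereas the paper's proof states "for $n\ge L$" without noting that the hypothesis is being invoked at index $n-1$; your version is the more careful of the two, and your remark that this is absorbed into the freedom in choosing $L$ (or handled via $\Phi(0,0)=I$ in the Uap case) is the right resolution.
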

\begin{proof}
Given $\eps >0$,  choose $P$ and $L$ as in \eqref{asymp} w.r.t.\
$\frac \eps 2$.  
For $n \ge L$ then use the  triangular inequality for the angle:
\begin{align*}
& \big|\ang(\Phi(n-1,0)V,\Phi(n,0)V) -
                 \ang(\Phi(n+p-1,0)V,\Phi(n+p,0)V)\big|\\ 
&\le \ang(\Phi(n-1,0)V,\Phi(n+p-1,0)V)+\ang(\Phi(n,0)V,\Phi(n+p,0)V)\\
&\le \tfrac \eps 2 + \tfrac \eps 2 = \eps,
\end{align*}
and the proof is complete. The case Uap follows by setting $L=1$. 
\end{proof}
Next we observe that the  (A)Uap-property is invariant under kinematic
transformations. 
\begin{proposition}\label{invariant}
Assume that the system \eqref{diffeq} is AUap resp.\ Uap.\\
Then the kinematically equivalent system \eqref{difftransform2} is
also AUap resp.\ Uap provided 
the transformations $Q_n,Q_n^{-1}$ are uniformly bounded.
\end{proposition}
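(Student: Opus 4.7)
The plan is to transfer the (A)Uap property through the kinematic relation $\tilde\Phi(n,0) = Q_n \Phi(n,0) Q_0^{-1}$ from \eqref{relatePhi2}. The central technical ingredient is that a uniformly bounded family of invertible linear maps acts uniformly Lipschitz on the Grassmannian: if $M \coloneqq \sup_n \max(\|Q_n\|,\|Q_n^{-1}\|) < \infty$, then there exists $C = C(M)$ with
\[
\ang(Q_n V, Q_n W) \le C\,\ang(V,W) \quad \forall\, V,W \in \cG(s,d),\ n \in \N_0.
\]
This can be derived by replacing orthonormal bases $V_B, W_B$ of $V, W$ by orthonormal bases of $Q_n V, Q_n W$ obtained from QR factorizations of $Q_n V_B, Q_n W_B$, bounding the triangular factors in terms of the condition number, and invoking the singular-value characterization of principal angles from Section \ref{sec1.0}. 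The equivalence $\tfrac 1 \pi \ang \le \sin\circ\ang \le \ang$ in \eqref{metric} then converts a Lipschitz bound in the $\sin$-distance into one in the angle metric.

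Given this, fix $\varepsilon > 0$ and apply the AUap hypothesis of $\varphi_n(V) = \Phi(n,0)V$ with tolerance $\varepsilon/(2C)$, obtaining $P, L$ as in \eqref{asymp}. For an arbitrary $\tilde V \in \cG(s,d)$, set $V \coloneqq Q_0^{-1}\tilde V$; for each $\ell \in \N$ there is $p \in \{\ell,\ldots,\ell+P\}$ with $\ang(\Phi(n,0)V, \Phi(n+p,0)V) \le \varepsilon/(2C)$ for every $n \ge L$. Writing $W_n \coloneqq \Phi(n,0)V$ and splitting by the triangle inequality for $\ang$,
\[
\ang\bigl(Q_n W_n,\, Q_{n+p} W_{n+p}\bigr)
\le \ang\bigl(Q_{n+p} W_n,\, Q_{n+p} W_{n+p}\bigr) + \ang\bigl(Q_n W_n,\, Q_{n+p} W_n\bigr),
\]
the first summand is bounded by $C\cdot \varepsilon/(2C) = \varepsilon/2$ via the Lipschitz property of $Q_{n+p}$. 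The Uap case corresponds to $L=1$ and is handled identically once the second summand is controlled.

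The main obstacle is precisely this second summand $\ang(Q_n W_n, Q_{n+p} W_n)$: it records the purely transformation-driven rotation of a fixed subspace between times $n$ and $n+p$ and cannot be made small merely from uniform boundedness of $(Q_n)$. I expect the proof to exploit additional structure on $(Q_n)$ at this point -- either an implicit (A)Uap-hypothesis on $(Q_n)$, which would let the almost-periods $p$ be chosen jointly for $\Phi$ and for $Q$, or, as occurs in the CED application of Theorem \ref{thm3:CED} where $Q_n \to I$, convergence of $(Q_n)$, so that $\ang(Q_n W, Q_{n+p} W) \to 0$ uniformly in $W$ as $n \to \infty$ and the contribution is absorbed by enlarging the asymptotic threshold $L$ in \eqref{asymp}. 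The Uap conclusion is more delicate since $L=1$ leaves no asymptotic slack, and genuine uniform almost periodicity of $(Q_n)$ appears to be needed there.
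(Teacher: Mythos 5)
Your plan tracks the paper's own proof quite closely: both use the kinematic relation \eqref{relatePhi2}, the triangle inequality for $\ang$, and a Lipschitz--type estimate for the $Q_n$--action on $\cG(s,d)$ (the paper appeals to Lemma~\ref{app:Lest1} rather than a QR argument, but that is cosmetic). The obstruction you isolate is genuine. After factoring out $Q_{n+p}$ one is left with a residual that records the transformation's own drift; in the paper's notation this is the term $C_\pi\|Q_{n+p-1}^{-1}Q_{n-1}-I\|$ coming from Lemma~\ref{app:Lest2}. That lemma is \emph{additive}, $|\ang(SV,W)-\ang(V,W)|\le C_\pi\|S-I\|$, so this residual is merely bounded (by roughly $C_\pi(1+\kappa)$), not small, under uniform boundedness of $Q_n,Q_n^{-1}$ alone. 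The paper's displayed chain of inequalities treats Lemma~\ref{app:Lest2} as if it produced the \emph{product} $\|Q_{n+p-1}^{-1}Q_{n-1}-I\|\cdot\ang(\Phi(n-1,0)V,\Phi(n+p-1,0)V)$, which makes the leftover term appear to be $O(\eps')$; that step is not justified by Lemma~\ref{app:Lest2}, and with the correct additive bound the estimate does not close.

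To see concretely that the extra hypothesis you request is needed, take $d=2$, $s=1$, $A_n\equiv I$ (so $\Phi(n,0)=I$ and the system is trivially Uap), and let $Q_n=T_{\theta_n}$ with $\theta_n=\sum_{k=1}^n\delta_k$, where $\delta_k\in\{0,\tfrac{\pi}{4}\}$ alternates on dyadic blocks ($\delta_k=0$ for $k\in[2^{2j},2^{2j+1})$, $\delta_k=\tfrac{\pi}{4}$ for $k\in[2^{2j+1},2^{2j+2})$). The rotations $Q_n$ and $Q_n^{-1}$ are uniformly bounded, yet the transformed angle sequence $\tilde b_n(V)=\ang(T_{\theta_{n-1}}V,T_{\theta_n}V)=|\delta_n|$ is not AUap: for any $P,L$ and any $p\ge 1$ there is $n\ge L$ near a block boundary with $\delta_n=0$ and $\delta_{n+p}=\tfrac{\pi}{4}$. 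So Proposition~\ref{invariant} cannot hold from uniform boundedness alone. Your two proposed repairs are exactly right: either an (A)Uap hypothesis on $(Q_n)$, or convergence $Q_n\to Q$, which makes $\|Q_{n+p-1}^{-1}Q_{n-1}-I\|\to 0$ uniformly in $p$ and lets the residual be absorbed by enlarging $L$ (and therefore yields AUap but not Uap). The latter is exactly the regime in which Theorem~\ref{thm3:CED} invokes this proposition ($Q_n\to I$), so the use in the paper is sound, but the proposition as stated, and the proof as written, have the gap you identified.
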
 
\begin{proof}Let $\|Q_m\| \|Q_n^{-1}\| \le \kappa$ for $n,m \in \N$
  and recall the constants $C=\pi \kappa (1+\kappa)$ from
  \eqref{app:est1} 
  and  $C_{\pi}$ from \eqref{app:est2}. Given $\eps>0$,  
  choose $P$ and $L$ such that \eqref{asymp} holds with
  $\eps'=\frac{\eps}{2CC_{\pi}(1+\kappa)}$ 
  for $n-1 \ge L$.  
    By \eqref{relatePhi2} the quantities $\tilde{b}_n$ which belong to
    $\tilde{\Phi}$ satisfy 
  \begin{align*}
     \tilde{b}_n(Q_0V) = \ang(\tilde{\Phi}(n-1,0)Q_0V,\tilde{\Phi}(n,0)Q_0V)=
     \ang(Q_{n-1}\Phi(n-1,0)V,Q_n\Phi(n,0)V).
     \end{align*}
  With Lemmas \ref{app:Lest1}, \ref{app:Lest2} we then obtain  from
  \eqref{asymp} for all $V \in \cG(s,d)$, $\ell\in \N$, 
  $n \ge L$ and some $p \in \{\ell,\ldots,\ell+P\}$
  \begin{align*}
    & |\tilde{b}_n(Q_0V)-\tilde{b}_{n+p}(Q_0V)|\\ 
  &  = | \ang(Q_{n-1}\Phi(n-1,0)V,Q_n\Phi(n,0)V)
     -  \ang(Q_{n+p-1}\Phi(n+p-1,0)V,Q_{n+p}\Phi(n+p,0)V)|\\
    & \le  \ang(Q_{n-1}\Phi(n-1,0)V, Q_{n+p-1}\Phi(n+p-1,0)V ) 
      +\ang(Q_n \Phi(n,0)V,Q_{n+p}\Phi(n+p,0)V) \\
     & \le C\left[ \ang(Q_{n+p-1}^{-1}Q_{n-1}\Phi(n-1,0)V,\Phi(n+p-1,0)V) 
        + \|Q_{n+p}^{-1}Q_n \Phi(n,0)V,\Phi(n+p,0)V)\right]\\
      & \le C C_{\pi}\left[\|Q_{n+p-1}^{-1}Q_{n-1}-I\|
        \ang(\Phi(n-1,0)V,\Phi(n+p-1,0)V) \right.\\ 
        &  \left.   \qquad  \quad    +\|Q_{n+p}^{-1}Q_n-I \|
          \ang(\Phi(n,0)V, \Phi(n+p-1,0)V) \right]\\ 
      & \le C C_{\pi}(1+\kappa)2 \eps' = \eps.
  \end{align*}
\end{proof}
Note that this property allows us to obtain the AUap property for the
$\ell^1$-perturbation of a 
system which has the AUap and the CED property; see Theorems
\ref{roughCED} and \ref{thm3:CED}. 

The following Lemma \ref{estap}  shows that the partial sums formed
from an AUap sequence  
$b_n:\cV \to \cW$, $n\in\N$ have the uniform Cauchy property.  This
extends \cite[Lemma 3.6]{BeFrHu20} 
where Uap was assumed. The property will be useful for deriving
convergence of the 
numerical methods in Section \ref{Sec_num}.  
\begin{lemma}\label{estap}
Let $b_n: \cV\to \cW$, $n \in \N$ be a sequence of AUap
and uniformly bounded functions.
Then for all $\eps >0$ there exists $N\in\N$ such that for all $n\ge
m\ge N$, 
$ k\in\N$, $V\in\cV$
\begin{equation*}
 \Big\|\frac 1n \sum_{j=1}^n b_j(V) - \frac 1m \sum_{j=1}^m
b_{j+k}(V)\Big\| \le \eps.
\end{equation*}
\end{lemma}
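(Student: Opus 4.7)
My plan is to reduce to the already-proved Uap case of \cite[Lemma 3.6]{BeFrHu20} by truncating an initial ``pre-almost-periodic'' segment. Let $M := \sup_{n \in \N,\,V \in \cV}\|b_n(V)\| < \infty$ and, given $\eps > 0$, apply the AUap condition of Definition \ref{auap2} with tolerance $\eps/3$ to obtain parameters $P, L \in \N$. The shifted sequence $\tilde b_i(V) := b_{i+L-1}(V)$ is then Uap with the same $P$: substituting $n = i + L - 1 \ge L$ into the AUap estimate $\|b_n(V) - b_{n+p}(V)\| \le \eps/3$ yields the Uap condition for $\tilde b$ for every $i \ge 1$, uniformly in $V$, and the bound $\|\tilde b_i(V)\| \le M$ is inherited.

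Next I would express both averages in the statement in terms of $\tilde b$, modulo initial-segment remainders of order $O(LM/n)$ and $O(LM/m)$. Writing $\tilde\alpha_{n,k}(V) := \frac{1}{n}\sum_{i=1}^n \tilde b_{i+k}(V)$, one has
$$\frac{1}{n}\sum_{j=1}^n b_j(V) \;=\; \frac{1}{n}\sum_{j=1}^{L-1} b_j(V) \;+\; \frac{n-L+1}{n}\,\tilde\alpha_{n-L+1,\,0}(V).$$
For the shifted average: when $k \ge L - 1$, every index $j+k$ lies in the $\tilde b$ range and $\frac{1}{m}\sum_{j=1}^m b_{j+k}(V) = \tilde\alpha_{m,\,k-L+1}(V)$; when $k < L - 1$, one splits the sum at $j = L - k$ to peel off at most $L-1$ ``pre-Uap'' terms whose total contribution is bounded by $(L-1)M/m$.

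Then I would apply \cite[Lemma 3.6]{BeFrHu20} to the Uap sequence $\tilde b$ with tolerance $\eps/3$, obtaining $\tilde N \in \N$ such that $\|\tilde\alpha_{n',0}(V) - \tilde\alpha_{m,k'}(V)\| \le \eps/3$ uniformly for all $n' \ge m \ge \tilde N$, $k' \in \N$, $V \in \cV$. Choosing $N \ge \tilde N + L$ so large that $4(L-1)M/N \le \eps/3$, and combining the initial-segment errors, the multiplicative discrepancy $1 - \frac{n-L+1}{n} = O(L/n)$ (absorbed via $\|\tilde\alpha_{n',0}(V)\| \le M$), and the Uap Cauchy bound, I obtain the claimed $\eps$-estimate for all $n \ge m \ge N$, $k \in \N$, $V \in \cV$.

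The main technical obstacle is the careful bookkeeping around the normalization factor $\frac{n-L+1}{n}$ (which requires uniform control via $M$) and the case analysis at $k = L - 1$ where the shifted window begins to overlap with the non-Uap initial segment; both are routine once the pre-Uap/Uap decomposition is in place. Conceptually, the reduction works precisely because AUap differs from Uap only on a fixed-length initial segment of size $L-1$, whose relative weight in a Ces\`aro average vanishes as $n, m \to \infty$.
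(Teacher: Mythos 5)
Your proposal takes a genuinely different route from the paper's direct, self-contained argument (Appendix III), which re-proves the Cauchy estimate from scratch by choosing a block length $M=\lceil\frac{8}{\eps}b_\infty(L+P)\rceil$, showing that length-$M$ blocks at different starting points are close, and decomposing $n,m$ modulo $M$. You instead try to reduce AUap to the already-known Uap case by discarding an initial segment, which is conceptually attractive. However, as written there is a genuine gap at the very first step: the claim that $\tilde b_i = b_{i+L-1}$ ``is then Uap'' is false in general. You chose $L$ from the AUap data at the \emph{single} tolerance $\eps/3$, so the shift only produces the Uap estimate at that one tolerance; Definition~\ref{auap2}(ii) requires the estimate to hold for \emph{every} tolerance with $L=1$, with $P$ allowed to depend on the tolerance. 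Since the length of the initial ``bad'' segment of an AUap sequence can grow without bound as the tolerance shrinks (take $b_n=1/n$, which is AUap with $P=1$ but whose shifts all remain non-Uap), no fixed shift converts an AUap sequence into a Uap one. Consequently you cannot invoke \cite[Lemma 3.6]{BeFrHu20} as a black box on $\tilde b$.

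The reduction is repairable, but only by opening up that lemma: its proof uses the Uap estimate at one internal tolerance (of order $\eps/8$, as the paper's Appendix III shows), and the single-tolerance estimate that your shift does provide suffices there, provided you apply the AUap hypothesis at the matching tolerance (so $\eps/3$ is the wrong constant). At that point you are effectively re-running the block argument of Appendix~III on the shifted sequence rather than quoting a theorem. Two smaller bookkeeping issues also need attention: (i) after truncation you compare $\tilde\alpha_{n-L+1,0}$ with $\tilde\alpha_{m,k-L+1}$, but the Uap lemma requires $n'\ge m'$, and $n-L+1<m$ whenever $m\le n<m+L-1$, so you should match lengths (e.g.\ compare against $\tilde\alpha_{n,0}$, pushing the extra tail $j=n+1,\dots,n+L-1$ into the $O(LM/n)$ remainder); and (ii) the case $k=L-1$ produces the shift $k'=0$, which sits outside the $k\in\N$ range of the cited lemma and needs an extra (trivial) triangle-inequality step. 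Once these points are addressed the reduction works, but the paper's direct proof avoids them and is self-contained.
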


\begin{proof}
Let $\eps >0$. By AUap there exists a
$P\in\N$ and $L \in\N$ such that for every $V\in\cV$ and
each $k\in\N_0$ we find a $p_k\in\{k,\dots,P+k\}$ (which may depend on
$V$) with 
\begin{equation}\label{apbasic}
\|b_n(V)-b_{n+p_k}(V)\| \le \frac \eps 8 \quad \forall n\ge L.
\end{equation}
Let $b_{\infty}= 2\sup_{n,V}\|b_n(V)\|$ and $M = \lceil \frac {8}\eps
b_\infty (L+P) \rceil$. It follows for each $k\in\N_0$ that
\begin{equation}\label{finally}
\begin{aligned} 
\Big\| \sum_{j=1}^M b_j(V) - \sum_{j=1}^M b_{j+k}(V)\Big\| 
& \le \sum_{j=1}^M \|b_j(V)-b_{j+p_k}(V)\| + \Big\|\sum_{j=1}^M
      b_{j+p_k}(V)-\sum_{j=1}^M b_{j+k}(V)\Big\| \\
& \le \sum_{j=1}^{L-1} \|b_j(V)-b_{j+p_k}(V)\| 
  +\sum_{j=L}^M\|b_j(V)-b_{j+p_k}(V)\|\\
&\phantom{\le}   + \Big\|\sum_{j=1}^M
      b_{j+p_k}(V)-\sum_{j=1}^M b_{j+k}(V)\Big\| \\
&\le L b_\infty + M \frac \eps 8 + P b_\infty \le  M \frac \eps 4.
\end{aligned}
\end{equation}
Let $N= \lceil \frac 4\eps M b_\infty\rceil$ and decompose $n\ge m\ge N$ 
modulo $M$, i.e.\
\begin{equation*}
m = \ell_m M + r_m,\ 0\le r_m<M,\quad  n = \ell_n M + r_n,\ 0\le r_n<M.
\end{equation*}
For $c(V)\coloneqq \sum_{j=1}^M b_j(V)$ we obtain from \eqref{finally}
for each $k\in \N_0$ 
the estimates
\begin{align*}
\Big\|\sum_{j=1}^{M \ell_n} b_j(V) - \ell_n c(V)\Big\|
 &  \le \sum_{i=1}^{\ell_n} \Big\| \sum_{j=1}^M b_{j+(i-1)M}(V) -
     c(V)\Big\|
\le  \ell_n M \frac \eps 4,\\
\Big\| \frac{\ell_n}{n} c(V) - \frac{\ell_m}{m} c(V)\Big\|
&= \Big|\frac{\ell_n r_m - \ell_m r_n}{nm}\Big| \|c(V)\|\le
\frac {\ell_n M}{nm} \|c(V)\|
\le \frac 1m \|c(V)\| \le \frac{M}{N} b_{\infty}  \le \frac \eps 4.
\end{align*}
 Combining these estimates, we find for every $k\in\N_0$
\begin{align*}
&\Big\|\frac 1n \sum_{j=1}^nb_j(V) - \frac 1m \sum_{j=1}^m
                 b_{j+k}(V)\Big\|\\
& \le \Big\|\frac 1n \sum_{j=1}^{M\ell_n} b_j(V) - \frac 1m
   \sum_{j=1}^{M\ell_m}b_{j+k}(V)\Big\|
+ \Big\|\frac 1n \sum_{j=M\ell_n+1}^{M\ell_n+r_n} b_j(V) - \frac 1m
   \sum_{j=M\ell_m+1}^{M\ell_m+r_m}b_{j+k}(V)\Big\|\\
&\le \Big\|\frac 1n \sum_{j=1}^{M\ell_n} b_j(V) - \frac{\ell_n}{n}
                                                       c(V)\Big\|
+ \Big\|\frac{\ell_n}{n} c(V) - \frac{\ell_m}{m} c(V)\Big\|\\
&+ \Big\| \frac{\ell_m}{m} c(V) -  \frac 1m
   \sum_{j=1}^{M\ell_m}b_{j+k}(V)\Big\|
+ \Big\|\frac 1n \sum_{j=M\ell_n+1}^{M\ell_n+r_n} b_j(V) - \frac 1m
   \sum_{j=M\ell_m+1}^{M\ell_m+r_m}b_{j+k}(V)\Big\|\\
&\le \frac{\ell_n M}{n} \frac \eps 4 + \frac \eps 4 + \frac{\ell_m M}{m}
\frac \eps 4 + \frac{M}{N} b_\infty \le \eps.
\end{align*}
\end{proof}

\begin{example} \label{ex3:AUapauto} (Example \ref{ex3:semisimple}
  revisited) \\ 
  Consider the autonomous case $A_n \equiv A$ where $A$ is invertible
  and (complex) diagonalizable. 
  Let  $\lambda_j =\sigma_j e^{i \varphi_j}$, $\sigma_j>0$, $\varphi_j
  \in (0,\pi)$, $j=1,\ldots,k$ be the complex eigenvalues and let
  $\lambda_{j+k}\neq 0 $,  
  $j=1,\ldots \ell$ be the real ones (hence $d=2k+\ell$). We claim
  that the sequence 
  \begin{align*}
    b_n: \cD(s,d) \to \R, \quad b_n(V)= \ang(A^{n-1}V,A^nV), \; n \in \N
  \end{align*}
  is Uap if the angles $\varphi_j$, $j=1,\ldots,k$ and $\pi$ are
  rationally independent, i.e.\  $\sum_{j=1}^k n_j \varphi_j= 
  n_0\pi$
  for some $n_j\in \Z$, $ j=0,\ldots,k$ implies $n_0= n_1= \cdots=
  n_k=0$. Recall the  Definition \ref{deftrace} of the trace space 
  and note that $\cD_n(s,d)\equiv \cD(s,d)$ for all $n \in \N$ in the
  autonomous case. By  assumption 
 there exists a decomposition $\R^d= \bigoplus_{j=1}^{k+\ell}W_j$ into
 invariant subspaces of $A$ where 
   $W_j=\range(Q_j)$,  $j=1,\ldots,k+\ell$ for 
    matrices of full rank $Q_j\in \R^{d,2},j=1,\ldots,k$ and
    $Q_{j+k}\in \R^{d,1},j=1,\ldots,\ell$. Further, we have 
    \begin{align} \label{eq3:evrel}
      AQ_j=\sigma_j  Q_jT_{\varphi_j},\ j=1,\ldots,k, \quad
      AQ_{j+k}=\lambda_{j+k}Q_{j+k},\ j=1,\ldots,\ell. 
    \end{align}
    For every trace space  $V\in \cD(s,d)$ there exist subspaces $V_j$
    of $W_j$, $j=1,\ldots,k+\ell$ (possibly trivial $V_j=\{0\}$) such
    that 
    \begin{align*}
      V= \bigoplus_{j=1}^{k+\ell} V_j, \quad \sum_{j=1}^{k+\ell} \dim(V_j)=s.
    \end{align*}
    The index set $J=\{j\in \{1,\ldots,k\}: \dim(V_j)=1\}$ singles out
    the one-dimensional parts of $V$ in two-dimensional spaces  
    so that $V_j = \mathrm{span}(v_j)$ for some $v_j=Q_ju_j$,
    $\|u_j\|=1$, $j\in J$.  
    For $j \notin J$ either $V_j=W_j$ or $V_j=\{0\}$ holds and,
    therefore, $AV_j=V_j$ follows from \eqref{eq3:evrel}. 
    On the other hand, equation \eqref{eq3:evrel} implies 
    \begin{align*} AV_j = \mathrm{span}(Av_j)= \mathrm{span}(Q_j
      T_{\varphi_j}u_j), \quad \text{ for } j \in J. 
    \end{align*}
    Next, consider the $|J|$-dimensional torus $\T^J=
    \{u_J=(u_j)_{j\in J}: u_j \in \R^2, \|u_j\|=1, j\in J \}$ and  the 
    maps
    \begin{align*}
      F_J&: \T^J \to \T^J,\quad F_J(u_J)= (T_{\varphi_j}u_j)_{j \in
           J}, \\ 
      g&: \T^J \to \R,\qquad \, g(u_J)=\ang\Big( \bigoplus_{j\in
         J}\mathrm{span}(Q_j u_j) \oplus \bigoplus_{j \notin J}V_j, 
     \bigoplus_{j \in J} \mathrm{span}(Q_jT_{\varphi_j} u_j) \oplus
         \bigoplus_{j \notin J}V_j \Big). 
          \end{align*}
    With these settings we find $g(u_J)= \ang(V,AV)$,
    $g(F_J(u_J))=\ang(AV,A^2V)$, and thus 
    \begin{align*}
     b_n(V)= g(F_J^{n-1}(u_J)), \quad  \alpha_n(V) = \frac{1}{n}
      \sum_{j=1}^n b_j(V) = \frac{1}{n} \sum_{j=1}^n  g(F_J^{j-1}
      (u_J)). 
    \end{align*}
    Due to the rational independence of $\pi$ and $\varphi_j$,
    $j=1,\ldots,k$ the map $F_J$ is ergodic 
    w.r.t.\ Lebesgue measure; see \cite[Prop.1.4.1]{KH95}. Moreover,
    $F_J$ is an isometry w.r.t.\ the metric  
    $d(u_J,v_J)=\max_{j \in J}\ang(u_j,v_j)$  on $\T^J$. The result in
    \cite[Ch. 4, Remark 1.3]{P89} 
    then shows that the map $F_J$ is uniformly almost periodic i.e.\
    for every $\varepsilon >0$ there exists a relatively 
    dense set $\cP\subset \N_0$ with $d(u_J,F_J^p(u_J))\le
    \varepsilon$ for all $u_J \in \T^J$, $p \in \cP$.  
    We prove that $b_n$ is then  Uap on the set $\cD_J(s,d)= \{V\in
    \cD(s,d): \dim(V_j)=1 \text{ for } j \in  J\}$ 
    as follows (cf.\ the proof of \cite[Proposition 5.2]{BeFrHu20}):
    by the uniform continuity of $g$ on $\T^J$ 
    there exists for every $\varepsilon'>0$ some
    $\varepsilon>0$ such that $|g(u_J)-g(v_J)|\le \varepsilon'$
    whenever $d(u_J,v_J) \le \varepsilon$, $u_J,v_J\in \T^J$. 
    For the relatively dense set $\cP$ which belongs to $\varepsilon$
    we then obtain 
    \begin{align*}
      |b_n(V)-b_{n+p}(V)|=
      |g(F_J^{n-1}(u_J))-g(F_J^p(F_J^{n-1}(u_J)))| \le \varepsilon'
      \quad \forall n\in \N, p\in \cP,u_J 
      \in \T^J.
    \end{align*}
    Thus $b_n$ is Uap on $\cD_J(s,d)$ and also on $\cD(s,d)=
    \bigcup_{J} \cD_J(s,d)$, since there are only 
    finitely many index sets $J \subseteq \{1,\ldots,k\}$. 
\end{example} 
The following example shows that the AUap property does not hold for
matrices $A$ which have generalized eigenvectors. 
\begin{example} \label{ex6:Jordancounter}
  For the Jordan matrix $A= \left(\begin{smallmatrix} 1 & 1 \\ 0 &
      1 \end{smallmatrix}\right)$ 
    the sequence $b_n(V) = \ang(A^{n-1}V,A^{n}V)$, $V \in \cG(1,2)$ is
    not AUap. Suppose the contrary 
  and let $P,L$, $p=p(\ell,V)\in \{\ell,\ldots,\ell+P\}$ be the data
  from  \eqref{asymp} with $\varepsilon=\frac{\pi}{8}$. 
  For the vectors $v_n= \begin{pmatrix} -n +1 &
    1 \end{pmatrix}^{\top}$, $n \ge L$ we 
  obtain
  \begin{align*}
    A^{n-1}v_n= \begin{pmatrix} 0 \\ 1 \end{pmatrix},\quad A^n v_n
    = \begin{pmatrix} 1 \\ 1 \end{pmatrix},\quad 
    A^{n+p-1}v_n= \begin{pmatrix} p \\ 1 \end{pmatrix},\quad A^{n+p}
    v_n = \begin{pmatrix} p+1 \\ 1 \end{pmatrix}. 
  \end{align*}
  Then we have $b_n(v_n)=\ang(A^{n-1}v_n,A^nv_n)= \frac{\pi}{4}$ and
  \begin{align*}
    b_{n+p(\ell,V_n)}(v_n) = \ang\left( \begin{pmatrix} p(\ell,v_n) \\
        1 \end{pmatrix},\begin{pmatrix} p(\ell,v_n)+1 \\
        1 \end{pmatrix} 
    \right) \rightarrow 0 \text{ as } \ell \to \infty
  \end{align*}
  since $p(\ell,v_n) \to \infty$ as $\ell \to \infty$.
  Thus $|b_n(v_n)-b_{n+p(\ell,v_n)}(v_n)| \le \frac{\pi}{8}$ is
  violated for $\ell$ sufficiently large. 
\end{example} 
Contrary to the (A)Uap-property, the uniform Cauchy \eqref{uC}
property is generally not 
invariant w.r.t.\ an autonomous similarity transformation as the
following Example \ref{gegen} 
shows. 
\begin{example}\label{gegen}
Let 
$$
D=\begin{pmatrix}
J & 0\\ 0 & J
\end{pmatrix},\quad J=\begin{pmatrix} 0 & -1 \\ 1 & 0 \end{pmatrix},
\quad 
X = \begin{pmatrix}
0 & I_2\\ I_2 & 0
\end{pmatrix}
$$
and let $A_n$ be defined in Table \ref{Adef}. Denote by $\Phi$ the
corresponding solution operator. 

 \begin{table}[hbt]
  \begin{center}
  \begin{tabular}{c|cccccccccccccccccccccc}
    $n$ & 0 & 1 & 2 & 3 & 4 & 5 & 6 & 7 & 8 & \dots & 15 & 16 & 17 & \dots
    & 32 & 33 \\\hline

   $A_n$ & $D$ & $D$ & $X$ & $D$ & $D$ & $D$ & $D$ & $X$ & $D$ & $\dots$ &
   $D$ & $X$ & $D$ & \dots & $D$ & $X$
  \end{tabular}
  \normalsize
\caption{Construction of $(A_n)_{n\in\N_0}$.\label{Adef}}
\end{center}
\end{table}
We analyze the case $s=1$. 
Since $\ang(v,Dv) = \frac \pi 2$ for each $0\neq v \in \R^4$ we
observe, that the system $u_{n+1} = A_n u_n$, $n\in\N_0$ satisfies the
uniformly Cauchy condition \eqref{uC}. 

Let 
$$
B_n = S A_n S^{-1}
\quad \text{with}\quad
S = 
\begin{pmatrix}
1 & 0 & 0 & 0\\
0 & 1 & 0 & 0\\
0 & 0 & 1 & 1\\
0 & 0 & 0 & 1
\end{pmatrix}.
$$
\end{example}
Denote by $e_i$ the $i$-th unit vector. 
We get
$$
\ang(Sv,SDv) = 
\begin{cases}
\tfrac \pi 2,&\text{for } v \in \{e_1,e_2\},\\
\tfrac \pi 4,&\text{for } v \in \{e_3,e_4\}.
\end{cases}
$$
For $v=e_1$, the sequence $\frac 1n \sum_{j=1}^n
\ang(S\Phi(j-1,0)v,S\Phi(j,0)v)$, $n\in\N$ does not converge and thus, the
transformed system does not satisfy \eqref{uC}.
 
\section{Numerical approximation}\label{Sec_num}
In \cite[Section 4.1]{BeHu22} we developed an algorithm for the
approximation of outer angular values. In Section \ref{algo} we extend
these ideas and 
obtain an algorithm that approximates the outer angular spectrum
$\Sigma_s$. In Section \ref{sec4:FT} we introduce finite time outer
angular spectra and investigate their lower and upper semi-continuity.

\subsection{Convergence of finite time angular spectra}
\label{sec4:FT}
For the numerical calculation of $\Sigma_s$, we introduce the
\textbf{finite time outer angular $N$-spectrum}
\[
\Sigma_{s}^{N} \coloneqq \{\alpha_N(V): V \in \cD_0(s,d)\}.
\]
This spectrum is numerically accessible by computing the dichotomy
spectrum and its spectral bundles first, and then by solving
optimization problems. 
For the simple Example \ref{ex2:revisit}, finite time and infinite outer
angular spectra coincide, i.e.\ $\Sigma_s = \Sigma_{s}^{N}$
for $s\in\{1,2\}$  and all $N \in \N$. 

We observe the following (modest) connection between these spectra.
\begin{lemma}\label{modest} For every $\varepsilon>0$ and $\theta \in
  \Sigma_s$ there exists some $N \in \N$ sucht that 
\[
\dist(\theta,\Sigma_{s}^{N}) \le \varepsilon.
\]
\end{lemma}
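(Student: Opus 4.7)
The plan is to unwind the two layers of approximation built into the definition of $\Sigma_s$: the closure in Definition \ref{def3:1} and the liminf/limsup bracket characterizing accumulation points. I will combine these with the reduction theorem so that the witness subspace lies in $\cD_0(s,d)$, which is what is needed to land in $\Sigma_s^N$.

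First, I would exploit the reduced description of the outer angular spectrum given in \eqref{eq2:reduceangspec}. Given $\theta \in \Sigma_s$ and $\varepsilon>0$, the closure statement yields some $\theta' \in [0,\pi/2]$ with $|\theta-\theta'|\le \varepsilon/2$ for which a trace space $V\in \cD_0(s,d)$ exists with
\[
\varliminf_{n\to \infty}\alpha_n(V) \;\le\; \theta' \;\le\; \varlimsup_{n\to \infty}\alpha_n(V).
\]
The role of the reduction theorem is essential here: without it the witness $V$ would only be guaranteed to lie in $\cG(s,d)$, and it is not immediate that $\alpha_N(V)\in \Sigma_s^N$ for such $V$.

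Next, I would invoke Lemma \ref{accu}, which tells us that every value in the interval $[\varliminf_n\alpha_n(V),\varlimsup_n\alpha_n(V)]$ is actually attained as an accumulation point of the scalar sequence $(\alpha_n(V))_{n\in \N}$. Applied to $\theta'$, this produces some index $N\in \N$ with $|\alpha_N(V)-\theta'|\le \varepsilon/2$. Since $V\in \cD_0(s,d)$, by definition $\alpha_N(V)\in \Sigma_s^N$, and the triangle inequality gives
\[
\dist(\theta,\Sigma_s^N)\;\le\; |\theta-\alpha_N(V)|\;\le\; |\theta-\theta'|+|\theta'-\alpha_N(V)|\;\le\; \varepsilon,
\]
as required.

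There is no genuine obstacle in this argument; the only subtlety is organizational, namely making sure that the approximation delivered by the closure is taken inside the trace-space version of the spectrum so that Lemma \ref{accu} can be combined with the reduction theorem to land in $\Sigma_s^N$. Note that the lemma is one-sided (it does not claim that the finite-time spectra converge to $\Sigma_s$ in Hausdorff distance), and indeed no stronger conclusion is available without additional hypotheses such as the uniform Cauchy / AUap property discussed in Section~\ref{sec4:FT}, which is presumably why the authors describe the connection as ``modest''.
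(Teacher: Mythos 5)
Your proof is correct and follows essentially the same route as the paper: both arguments use the reduced form of the outer angular spectrum from Theorem~\ref{thm2:reducespec} to obtain a witness $V\in\cD_0(s,d)$ within $\varepsilon/2$ of $\theta$, then apply Lemma~\ref{accu} to find an index $N$ with $\alpha_N(V)$ close to $\theta$. Your version merely writes out the auxiliary value $\theta'$ and the triangle inequality explicitly where the paper compresses them into the enlarged interval $[\varliminf\alpha_n(V)-\varepsilon/2,\varlimsup\alpha_n(V)+\varepsilon/2]$.
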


\begin{proof}
Let $\eps >0$ and $\theta\in\Sigma_s$. Then, there exists a
$V\in\cD_0(s,d)$ such that 
\[
\theta \in [\varliminf_{n\to\infty}
\alpha_n(V)-\tfrac \eps 2, \varlimsup_{n\to\infty}\alpha_n(V)+\tfrac \eps
2].
\]
By Lemma \ref{accu}, each angle in 
$[\varliminf_{n\to \infty} \alpha_n(V), \varlimsup_{n\to\infty}\alpha_n(V)]$
is an accumulation point of $(\alpha_n(V))_{n\in\N}$.
Thus, we find an $N\in\N$ such that 
$|\alpha_N(V) - \theta| \le \eps$.
\end{proof}

However, finite and infinite spectra do not conicide in general. 
We revisit the key examples from \cite[Section 3.2]{BeFrHu20}. 
These examples  show that one can generally not
expect a sharper result than Lemma \ref{modest}. 
The well known notions of upper semi-continuity and lower
semi-continuity do not hold  true. 
\begin{example}\label{E1}
For given $0 \le \varphi_0 <\varphi_1 \le \frac \pi 2$ we define
\begin{equation}\label{Ex1}
A_n = 
\begin{cases}
T_{\varphi_0},&
\text{ for } n = 0 \lor n \in \bigcup_{\ell=1}^\infty
[2^{2\ell-1},2^{2\ell} -1]\cap \N,\\[3mm]
T_{\varphi_1},&\text{ otherwise.}
\end{cases}
\end{equation}
Following the computation of the extremal values in \cite[Example
3.10]{BeFrHu20} we obtain the outer angular spectra 
\[
\Sigma_1 = \big[\tfrac 23 \varphi_0 + \tfrac 13 \varphi_1, \tfrac
13 \varphi_0 + \tfrac 23 \varphi_1\big],\quad
\Sigma_{1}^{N} = \{\alpha_N(V): V\in\cG(1,2)\}.
\]
Note that the dichotomy
spectrum $\{1\}$  provides no spectral separation, i.e.\ $\cW_0^1 =
\R^2$. Thus we find 
$\cD_0(1,2) = \cG(1,2)$. 
All one-dimensional
subspaces lead to the same angular value w.r.t.\ \eqref{Ex1}. 
For every  fixed $V\in\cG(1,2)$ we get
\begin{align*}
\Sigma_{1}^{N} &= \{\alpha_N(V)\}
= \Big\{\frac 1N \sum_{j=1}^N \ang(\Phi(j-1,0)V,\Phi(j,0)V)\Big\}
= \Big\{\frac 1N \sum_{j=1}^N g(j)\Big\}
\end{align*}
where
\[
g(j) = 
\begin{cases}
\varphi_0,&
\text{ for } j = 0 \lor j \in \bigcup_{\ell=1}^\infty
[2^{2\ell-1},2^{2\ell} -1]\cap \N\\[3mm]
\varphi_1,&\text{ otherwise}
\end{cases}
\]
The finite time outer angular spectrum  $\Sigma_{1}^{N}$ consists of
one element only for 
each $N\in\N$ and, therefore,  cannot approximate the interval
$\Sigma_1$. 
\end{example}
This example shows that  Lemma \ref{modest} does not imply lower
semi-continuity, i.e.\
\[
\dist(\Sigma_1,\Sigma_{1}^{N}) = \sup_{\theta\in
  \Sigma_1}\inf_{\vartheta\in\Sigma_{1}^{N}} |\theta-\vartheta| \to
0 \text{ as } N \to \infty.
\]
However, lower semi-continuity can be achieved for the union of finite time spectral
sets. For $N,M\in\N$, $N<M$ we define 
\begin{equation}\label{extended}
\Sigma_s^{N,M} \coloneqq \bigcup_{j=N}^M \Sigma_{s}^{j}
\end{equation}
and get lower semi-continuity in the following sense:
\begin{lemma} \label{lem4:lsc}
For any $N\in\N$ the following holds:
\[
\lim_{M\to \infty}
\dist\left(\Sigma_s,\Sigma_s^{N,M}\right) = 0.
\]
\end{lemma}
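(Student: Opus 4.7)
The plan is to combine Lemma \ref{modest} (suitably strengthened to allow the index to be arbitrarily large) with the compactness of $\Sigma_s$, and then to exploit the monotonicity of the family $(\Sigma_s^{N,M})_{M\ge N}$ in $M$.

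First I would observe that $\Sigma_s\subseteq [0,\tfrac{\pi}{2}]$ is closed by definition, hence compact, and that for $M'\ge M$ one has $\Sigma_s^{N,M}\subseteq \Sigma_s^{N,M'}$, so $M\mapsto \dist(\Sigma_s,\Sigma_s^{N,M})$ is non-increasing. It therefore suffices to show that for every $\varepsilon>0$ there is some $M$ with $\dist(\Sigma_s,\Sigma_s^{N,M})\le \varepsilon$.

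Next I would strengthen Lemma \ref{modest} to: for every $\theta\in\Sigma_s$, $\varepsilon>0$ and $N_0\in\N$ there exist $j\ge N_0$ and $\vartheta\in\Sigma_s^j$ with $|\theta-\vartheta|\le \varepsilon$. This follows by the same argument as in the proof of Lemma \ref{modest}: by closedness, $\theta$ is $\tfrac{\varepsilon}{2}$-close to some $\theta'\in[\varliminf_n\alpha_n(V),\varlimsup_n\alpha_n(V)]$ for some $V\in\cD_0(s,d)$, and by Lemma \ref{accu} the value $\theta'$ is an accumulation point of $(\alpha_n(V))_{n\in\N}$, so infinitely many $n$ (in particular some $n\ge N_0$) satisfy $|\alpha_n(V)-\theta'|\le \tfrac{\varepsilon}{2}$, yielding the claim with $\vartheta=\alpha_n(V)$.

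Finally I would apply compactness. Given $\varepsilon>0$ and $N$, cover $\Sigma_s$ by the open intervals $(\theta-\tfrac{\varepsilon}{2},\theta+\tfrac{\varepsilon}{2})$, $\theta\in\Sigma_s$, and extract a finite subcover with centres $\theta_1,\dots,\theta_k$. For each $i$ the strengthened Lemma \ref{modest} furnishes an index $j_i\ge N$ and a value $\vartheta_i\in\Sigma_s^{j_i}$ with $|\theta_i-\vartheta_i|\le \tfrac{\varepsilon}{2}$. Setting $M\coloneqq \max_{i=1,\dots,k} j_i$, all $\vartheta_i$ lie in $\Sigma_s^{N,M}$, and for any $\theta\in\Sigma_s$ there is some $i$ with $|\theta-\theta_i|<\tfrac{\varepsilon}{2}$, hence $|\theta-\vartheta_i|\le \varepsilon$. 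Therefore $\dist(\Sigma_s,\Sigma_s^{N,M})\le \varepsilon$, and monotonicity completes the proof.

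There is no real obstacle; the only subtle point is the strengthening of Lemma \ref{modest} to allow the approximating index to be arbitrarily large, which is needed so that the finitely many chosen indices $j_1,\dots,j_k$ all lie in $\{N,N+1,\dots\}$ and thus produce an $M$ with $\Sigma_s^{N,M}$ containing all approximants.
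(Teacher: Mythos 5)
Your proof is correct and follows essentially the same route as the paper: use Lemma \ref{accu} to strengthen Lemma \ref{modest} so that the approximating index can be chosen $\ge N$, cover the compact set $\Sigma_s$ by finitely many $\varepsilon$-balls, and take $M$ to be the maximum of the finitely many indices obtained. The only (minor) addition you make is to state explicitly the monotonicity $\Sigma_s^{N,M}\subseteq\Sigma_s^{N,M'}$ for $M'\ge M$, which the paper leaves implicit when passing from ``some $M$'' to the limit $M\to\infty$.
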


\begin{proof} Let us denote by $B_{\varepsilon}(X)=\{y \in \R:
  \dist(y,X) \le \varepsilon\}$ the $\varepsilon$-neighborhood of a
  set $X \subseteq \R$ . 
Fix $N\in\N$ and $\eps >0$. Similar to the proof of Lemma
\ref{modest}, we find for each $\theta\in\Sigma_s$ an element
$V\in\cD_0(s,d)$ such that 
\[
B_{\frac \eps 4}(\theta) \subset [\varliminf_{n\to \infty} \alpha_n(V) -
\tfrac \eps 2, \varlimsup_{n\to \infty}\alpha_n(V) + \tfrac \eps 2].
\]
Since $\lim_{n\to \infty} \alpha_n(V)-\alpha_{n+1}(V) = 0$, we obtain
a number $L=L(\theta)\ge N$ such that $B_{\frac \eps 4}(\theta) \subset
B_\eps(\alpha_{L(\theta)}(V))$ and  thus $B_{\frac \eps 4}(\theta)
\subset B_\eps(\Sigma_{s}^{L(\theta)})$.

Since $\Sigma_s$ is compact there
exist some $\ell\in\N$ and 
$\theta_1,\dots,\theta_\ell\in \Sigma_s$ such that $\Sigma_s
\subseteq \bigcup_{j=1}^\ell B_{\frac \eps 4}(\theta_j)$.
Let $M = \max_{j=1,\dots,\ell} L(\theta_j)$.
Then we get for all $\theta \in \Sigma_s$:
\[
\theta\in \bigcup_{j=N}^M B_\eps\left(\Sigma_{s}^{j}\right)
= B_\eps \Big(\bigcup_{j=N}^M \Sigma_{s}^{j}\Big)
= B_\eps \left(\Sigma_s^{N,M}\right)
\]
and 
\[
\dist\left(\Sigma_s,\Sigma_s^{N,M}\right) 
= \sup_{\theta\in\Sigma_s}\inf_{\vartheta\in\Sigma_s^{N,M}}
|\theta-\vartheta| \le \eps.
\]\
\end{proof}
The next example shows that the finite time outer angular spectra can
be much larger than the infinite ones. 
\begin{example}\label{E2}
Consider the system  \eqref{diffeq} with the following setting:
\begin{equation}\label{Ex2}
A_n \coloneqq
\begin{cases}
\left(\begin{smallmatrix}
-1 & 0 \\ 0 & 1
\end{smallmatrix}\right), 
& \text{ for } n \in \bigcup_{\ell = 1}^\infty [2\cdot 2^\ell -
4,3\cdot 2^\ell -5],\\
\left(\begin{smallmatrix}
1 & 0 \\ 0 & \frac 12
\end{smallmatrix}\right), 
&\text{ otherwise.}
\end{cases}
\end{equation}
It turns out that $\Sigma_1=\{0\}$ and we determine $\Sigma_{1}^{N} =
\{\alpha_1(V): V\in \cG(1,d)\}$. For a fixed  $N\in\N$ there exists
$\varepsilon>0$ such that  
\[
\alpha_N\left(\Span\begin{pmatrix}0\\1\end{pmatrix}\right) = 0 ,\quad
\alpha_N\left(\Span\begin{pmatrix}\eps\\1\end{pmatrix}\right) \ge
\frac \pi{12}.
\]
By the continuity of $\alpha_N$ we obtain $\Sigma_{1}^{N} \supseteq
[0,\frac \pi{12}]$ which is much larger than the outer angular
spectrum $\Sigma_1$. 
In particular, there seems to be no 
easy relation between $\Sigma_s$ and $\Sigma_{s}^{N}$. 
\end{example}
Example \ref{E2} shows that Lemma \ref{modest} does not imply upper
semi-continuity, i.e.\
\[
\dist(\Sigma_{1}^{N},\Sigma_{1}) = \sup_{\theta\in
  \Sigma_{1}^{N}}\inf_{\vartheta\in\Sigma_{1}} |\theta-\vartheta| \to
0 \text{ as } N \to \infty.
\]
For each $N\in\N$ we find 
$\dist(\Sigma_{1}^{N},\Sigma_{1})
= \sup_{\theta\in\Sigma_{1}^{N}} |\theta| \ge \tfrac \pi{12}$.
We also do not  get upper semi-continuity when replacing
$\Sigma_{1}^{N}$ by $\Sigma_1^{N,M}$.
In the following we look for  stronger assumptions which guarantee
upper semi-continuity. 
Recall from Lemma \ref{estap} the uniform Cauchy property, i.e.\ 
the sequence  $\alpha_{N}(V)$ satisfies 
\begin{equation}\label{uC}
\forall \eps>0\ \exists N=N(\eps)\in\N: \forall n,m\ge N: \forall V\in\cD_0(s,d):
|\alpha_n(V) - \alpha_m(V)| \le \eps.
\end{equation}
\begin{lemma}\label{oberhalb}
  If the sequence $\alpha_N(V)$, $V\in \cG(s,d)$, $N \in \N$ is uniformly Cauchy then
upper semi-continuity holds true in the following sense:
\begin{equation}\label{usc}
\forall \eps>0 \ \exists N\in\N: \forall M\ge N: \dist(
\Sigma_s^{N,M},\Sigma_s) \le \eps.
\end{equation}
\end{lemma}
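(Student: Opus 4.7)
The plan is to exploit the uniform Cauchy assumption to show that each sequence $(\alpha_n(V))_{n\in\N}$ with $V\in\cD_0(s,d)$ not only converges but does so at a rate independent of $V$, and that its limit lies in $\Sigma_s$. Then each finite-time value $\alpha_j(V)$ with $j$ large is automatically close to a point of $\Sigma_s$.

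First I would fix $\eps>0$ and invoke the uniform Cauchy condition \eqref{uC} to obtain $N=N(\eps)\in\N$ such that
\[
|\alpha_n(V)-\alpha_m(V)|\le \eps\quad\text{for all } n,m\ge N\text{ and all } V\in\cD_0(s,d).
\]
For each $V\in\cD_0(s,d)$ the scalar sequence $(\alpha_n(V))_{n\in\N}$ is then Cauchy, hence converges to a limit $\theta_\infty(V)\in[0,\tfrac{\pi}{2}]$, with the uniform tail estimate $|\alpha_n(V)-\theta_\infty(V)|\le\eps$ for every $n\ge N$ (letting $m\to\infty$).

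Next I would observe that $\theta_\infty(V)\in\Sigma_s$. Indeed, since the limit exists, $\varliminf_{n\to\infty}\alpha_n(V)=\varlimsup_{n\to\infty}\alpha_n(V)=\theta_\infty(V)$, so $\theta_\infty(V)$ satisfies the defining condition of $\Sigma_s$ through the trace-space characterization \eqref{eq2:reduceangspec} provided by Theorem \ref{thm2:reducespec}.

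Finally, for any $M\ge N$ and any $\theta\in\Sigma_s^{N,M}$, write $\theta=\alpha_j(V)$ with some $j\in\{N,\ldots,M\}$ and $V\in\cD_0(s,d)$. The uniform tail estimate then yields
\[
\dist(\theta,\Sigma_s)\le |\alpha_j(V)-\theta_\infty(V)|\le\eps,
\]
and taking the supremum over $\theta\in\Sigma_s^{N,M}$ gives \eqref{usc}. The argument is essentially a uniform-convergence transfer from pointwise limits to their finite-time surrogates, so no real obstacle is expected beyond correctly citing \eqref{eq2:reduceangspec} to place $\theta_\infty(V)$ into $\Sigma_s$.
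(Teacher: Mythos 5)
Your proof is correct and takes essentially the same approach as the paper: both use the uniform Cauchy property to obtain a uniform tail estimate $|\alpha_n(V)-\lim_{\ell\to\infty}\alpha_\ell(V)|\le\eps$ for $n\ge N$, note that the limit lies in $\Sigma_s$, and then read off the bound for any $\theta=\alpha_j(V)\in\Sigma_s^{N,M}$. One small remark: the appeal to Theorem~\ref{thm2:reducespec} is not needed to place $\theta_\infty(V)$ in $\Sigma_s$, since $\cD_0(s,d)\subseteq\cG(s,d)$ and the original Definition~\ref{def3:1} already covers $V\in\cD_0(s,d)$.
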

\begin{proof}
  The uniform Cauchy property implies
 \begin{equation} \label{eq4:vorusc}
\forall \eps>0 \ \exists \bar N\in\N: \forall N\ge \bar N \ \forall
V\in\cD_0(s,d): \alpha_N(V) \in
B_\eps\left(\lim_{n\to\infty}\alpha_n(V)\right). 
\end{equation}
Thus, if we fix $N = \bar N(\eps)$ as in \eqref{eq4:vorusc} and let
$M\ge N$, then  
for every $\theta \in \Sigma_s^{N,M}$ there exist a $V\in\cD_0(s,d)$
and an index $j\in[N,M]\cap \N$ such that $\theta = \alpha_j(V)$, hence
$$
\theta \in B_\eps(\lim_{n\to\infty}\alpha_n(V)) \subset B_\eps(\Sigma_s).
$$\
This proves our assertion.
\end{proof}
\begin{remark}
Note that lower semi-continuity requires only the index $M$ 
in $\Sigma_s^{N,M}$ to be large while
upper semi-continuity also needs  $N$ to be  sufficiently large. 
\end{remark}
The uniform Cauchy property  even guarantees that the finite time
angular spectra converge in the 
Hausdorff distance without forming the union \eqref{extended}.
\begin{proposition} \label{prop4:Hausdorff}
If the uniform Cauchy condition \eqref{uC} is satisfied then
convergence holds w.r.t.\ the Hausdorff distance, i.e. 
\begin{equation}\label{dH}
\forall \eps>0\ \exists N\in\N: \forall n\ge N:
d_H(\Sigma_s,\Sigma_{s}^{n}) \le \eps, 
\end{equation}
where $d_H(U,V) \coloneqq \max\{\dist(U,V),\dist(V,U)\}$.
\end{proposition}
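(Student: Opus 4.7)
The plan is to leverage the uniform Cauchy assumption \eqref{uC} to upgrade the pointwise $\varliminf$/$\varlimsup$ appearing in Definition \ref{def3:1} to a single uniform limit function $\alpha(V)\coloneqq\lim_{n\to\infty}\alpha_n(V)$, and then to read off both inequalities in the Hausdorff distance from this uniform convergence via a triangle inequality.

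First I would invoke completeness of $[0,\tfrac\pi2]$: condition \eqref{uC} makes $(\alpha_n(V))_{n\in\N}$ a Cauchy sequence for every fixed $V\in\cD_0(s,d)$, hence it converges to some limit $\alpha(V)\in[0,\tfrac\pi2]$. Letting $m\to\infty$ in \eqref{uC} shows that the convergence is uniform in $V$, i.e.\ for every $\varepsilon>0$ there exists $N=N(\varepsilon)$ with
\[
|\alpha_n(V)-\alpha(V)|\le\tfrac\varepsilon2\qquad\forall\,n\ge N,\ \forall\,V\in\cD_0(s,d).
\]
In particular $\varliminf_n\alpha_n(V)=\varlimsup_n\alpha_n(V)=\alpha(V)$ for every trace space $V$, so the reduction Theorem \ref{thm2:reducespec} collapses the definition of $\Sigma_s$ to the clean description
\[
\Sigma_s=\cl\{\alpha(V):V\in\cD_0(s,d)\}.
\]

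With this preparation I would verify both halves of the Hausdorff distance for $n\ge N$. For $\dist(\Sigma_s^n,\Sigma_s)$: any $\theta=\alpha_n(V)\in\Sigma_s^n$ satisfies $|\theta-\alpha(V)|\le\tfrac\varepsilon2$ with $\alpha(V)\in\Sigma_s$, so $\dist(\theta,\Sigma_s)\le\tfrac\varepsilon2$. For $\dist(\Sigma_s,\Sigma_s^n)$: given $\theta\in\Sigma_s$, the closure description furnishes $V\in\cD_0(s,d)$ with $|\theta-\alpha(V)|\le\tfrac\varepsilon2$; then $\alpha_n(V)\in\Sigma_s^n$ and the triangle inequality yields $|\theta-\alpha_n(V)|\le\varepsilon$. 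Taking the supremum over $\theta$ in each case gives $d_H(\Sigma_s,\Sigma_s^n)\le\varepsilon$.

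The main obstacle I expect is this uniformity step: the $\cl$ in the definition of $\Sigma_s$ means a general $\theta\in\Sigma_s$ need not equal $\alpha(V)$ for any single $V$, only be approximated by such values. Without the uniformity of $|\alpha_n(V)-\alpha(V)|$ in $V$, the required index $n$ would depend on the approximating trace space, forcing one to pass to a union as in Lemma \ref{lem4:lsc}. It is precisely the uniform Cauchy hypothesis \eqref{uC} that allows a single $N$ to work for all $V$ simultaneously, which is exactly what is needed to control both directions of the Hausdorff distance with the same index $n$.
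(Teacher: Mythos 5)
Your proposal is correct and follows essentially the same argument as the paper: both use the uniform Cauchy hypothesis to obtain a single index $N$ with $|\alpha_n(V)-\lim_\ell\alpha_\ell(V)|\le\varepsilon/2$ uniformly over $V\in\cD_0(s,d)$, then verify the two one-sided Hausdorff estimates by picking, respectively, the $V$ realizing $\theta\in\Sigma_s^n$ and a $V$ whose limit approximates $\theta\in\Sigma_s$ within $\varepsilon/2$ (the latter via the closure description coming from Theorem~\ref{thm2:reducespec}). The only difference is that you spell out the construction of the limit function $\alpha$ via completeness, which the paper leaves implicit.
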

\begin{proof}
Given $\varepsilon>0$ choose $N \in \N$ such that 
\begin{equation}\label{ap-folg}
   |\alpha_n(V)- \lim_{\ell \to \infty}\alpha_{\ell}(V)| \le
   \frac{\varepsilon}{2} \quad 
  \forall n\ge N,\   \forall V\in\cD_0(s,d).
\end{equation}
Consider now $n\ge N$: 
\begin{itemize}
\item For $\theta\in \Sigma_{s}^{n}$ we find some $V\in\cD_0(s,d)$
  such that $\alpha_n(V) = \theta$. Since $\lim_{\ell \to
    \infty}\alpha_{\ell}(V)\in \Sigma_s$  the property \eqref{ap-folg} 
  shows  $\dist(\Sigma_{s}^{n},\Sigma_s) \le \frac \eps 2$. 
\item For $\theta\in\Sigma_s$ we find a $V\in\cD_0(s,d)$ such
  that $|\theta-\lim_{\ell\to\infty}\alpha_\ell(V)|\le \frac \eps 2$. 
With our choice of $N$ and $n$ in \eqref{ap-folg} we obtain
$$
|\theta-\alpha_n(V)| \le |\theta-\lim_{\ell\to \infty}\alpha_\ell(V)|
+ | \lim_{\ell\to \infty}\alpha_\ell(V) - \alpha_n(V)| \le \frac \eps
2 + \frac \eps 2 = \eps.
$$
Thus we have shown that $\dist(\Sigma_s,\Sigma_{s}^{n})\le \eps$ follows.
\end{itemize}
Combining both results proves \eqref{dH}.
\end{proof}
According to this result we can  determine the outer angular spectrum
of a system \eqref{diffeq}  
in an efficient way if it is uniformly Cauchy. It suffices to calculate
$\Sigma_{s}^{n}$ for  $n$ sufficiently large, while systems without
this property 
require to compute the union of several sets  $\Sigma_s^{N,M}$ with $M
\ge N$.  

Let us recall from Section \ref{sec3} that the uniform Cauchy property
is implied by uniform almost periodicity 
of the sequence $b_n(V)= \ang(\Phi(n-1,0)V,\Phi(n,0)V)$ (Lemma
\ref{estap}). The last property, in turn,  
holds for autonomous systems where the matrix $A$ has semi-simple
eigenvalues and the frequencies satisfy 
a nonresonance condition (Example \ref{ex3:AUapauto}). Moreover, this 
property is inherited by kinematically 
similar systems (Proposition \ref{invariant}). Among these
kinematically similar systems are those for which 
the system matrices $A_n$ are $\ell^1$-perturbations of $A$ (Theorem 
\ref{thm3:maininv}). This theorem needs  the CED property which
follows if the matrix $A$ has  only semi-simple eigenvalues; see
Example \ref{ex3:semisimple}. 
Summing all up, we find that the approximation result in Proposition
\ref{prop4:Hausdorff} applies to the $3$D-H\'{e}non system from
Example \ref{ex3:henon3} for which the nonresonance condition is
trivially satisfied. 

\subsection{An algorithm for computing the finite time outer angular
  $N$-spectrum}\label{algo}
We know turn the definition of the finite time outer angular spectrum
into an algorithm, having in mind our main application -- the
$3$D-H\'enon map from Example 
\ref{ex3:henon3}. By Proposition \ref{prop4:Hausdorff} it suffices to
compute the finite time outer angular $N$-spectrum for sufficiently
large $N$ in case $s\in\{1,2\}$ since $\Sigma_{s}^{N}$ converges to
$\Sigma_s$ w.r.t.\ the Hausdorff distance. In particular, all limits
exist. 

We assume that $\dim(\cW_0^k)\in\{1,2\}$ for all
$k=1,\dots,\varkappa$, cf.\ Definition \ref{deftrace}. 
Fix $N\in \N$ and for $V \in \cG(s,d)$ we abbreviate 
\[
  \theta_s(V) = \alpha_N(V) = \frac 1N \sum_{j=1}^{N}
\ang(\Phi(j-1,0)V,\Phi(j,0)V), \quad \theta_s(v)= \theta_s(\mathrm{span}(v))
\]
and define 
\[
\cB_0^k=\{v\in \cW_0^k: \|v\|=1 \}\quad  \text{for } k\in\{1,\dots,\varkappa\}.
\]

\subsubsection*{Step 1: Computation of the dichotomy spectrum and of
  the spectral bundles}
The computation of the dichotomy spectrum
$\Sigma_{\mathrm{ED}}^{\mathrm{approx}}$ and of the corresponding
spectral bundles is described in detail in \cite[Section 4.1, Step 1
and 2]{BeHu22}. In the following we use these techniques for approximating
the fibers $\cW_0^k$, $k=1,\dots,\varkappa$.  

\subsubsection*{Step 2: Computation of the first finite time outer
  angular $N$-spectrum $\Sigma_{1}^{N}$}
\
\medskip
 
\begin{center}
\begin{minipage}{.55\linewidth}
\begin{algorithmic}
\State{initialize $\Sigma_{1}^{N} = \emptyset$}
\For {$k = 1,\dots,\varkappa$}
\If{$\dim(\cW_0^k) =1$}
\State{$\Sigma_{1}^{N} = \Sigma_{1}^{N} \cup \theta_1(\cW_0^k)$}
\Else
\State{$\displaystyle \Sigma_{1}^{N} = \Sigma_{1}^{N} \cup
  \left[\min_{v \in \cB_0^k} \theta_1(v), 
    \max_{v \in \cB_0^k} \theta_1(v)\right]$}
\EndIf
\EndFor
\end{algorithmic}
\end{minipage}
\end{center}
\medskip

Note that we solve one-dimensional optimization problems in case
$\dim(\cW_0^k)=2$ with the MATLAB-routine \texttt{fminbnd}. 

\subsubsection*{Step 3: Computation of the second finite time outer
  angular $N$-spectrum $\Sigma_{2}^{N}$}
\
\medskip
 
\begin{center}
\begin{minipage}{.9\linewidth}
\begin{algorithmic}
\State{initialize $\Sigma_{2}^{N} = \emptyset$}
\For {$k = 1,\dots,\varkappa$}
\If{$\dim(\cW_0^k) =2$}
\State{$\Sigma_{2}^{N} = \Sigma_{2}^{N} \cup \theta_2(\cW_0^k)$} 
\EndIf
\EndFor
\For {$k_1 = 1,\dots,\varkappa-1$}
\For {$k_2 = k_1+1,\dots,\varkappa$}
\State{$\displaystyle \Sigma_{2}^{N} = \Sigma_{2}^{N} \cup
  \left[\min_{x\in\cB_0^{k_1},y\in\cB_0^{k_2}}\theta_2(\Span(x,y)), 
 \max_{x\in\cB_0^{k_1},y\in\cB_0^{k_2}}\theta_2(\Span(x,y))\right]$}
\EndFor
\EndFor
\end{algorithmic}
\end{minipage}
\end{center}
\medskip
The optimization problems that we solve in the last step
have dimension $\dim(\cW_0^{k_1}) + \dim(\cW_0^{k_2}) - 2$. 
For two-dimensional optimization problems, we apply the MATLAB-routine
\texttt{fminsearch}.

\subsection{Numerical results for the 3D-H\'enon system}
For $N\in\{50,100,1000,2000\}$, we apply the algorithm from above 
to the $3$D-H\'enon systems
\eqref{variational}, see Table \ref{num1} and to \eqref{var_auto},
cf.\ Table \ref{num2}. 
Note that we shift the index in \eqref{variational} such that the
main excursion of the homoclinic orbit $\bar x_0$ lies at $\frac N2$.

Errors that occur while computing homoclinic orbits and the
corresponding fiber bundles decay exponentially fast towards the
midpoint of the finite interval, see \cite[Section 2.6]{h17}. These
errors can be neglected, if the orbit and the fibers are
computed on sufficiently large intervals, while only the center part
of length $N$ is used. For this task, we introduce left and right
buffer intervals of length $500$, which are skipped in the final output. 

\begin{table}[hbt]
\begin{center}
\begin{tabular}{c|c|c|c}
$N$ & $\Sigma_{\mathrm{ED}}^{\mathrm{approx}}$ & $\Sigma_1^N$ &
 $\Sigma_2^N$\\\hline
$50$ & $[0.775,0.787]\cup[1.467,1.482]$ & $\{0.358\}\cup [1.108,1,264]$ &
$\{0.487\}\cup[1.211,1.275]$\\\hline
$100$ & $[0.776,0.785]\cup[1.468,1.482]$ & $\{0.178\}\cup [1.222,1,307]$ &
$\{0.242\}\cup[1.289,1.296]$\\\hline
$1000$ & $[0.779,0.784]\cup[1.470,1.478]$ & $\{0.018\}\cup [1.325,1,333]$ &
$\{0.024\}\cup[1.324,1.325]$\\\hline
$2000$ & $[0.779,0.784]\cup[1.471,1.477]$ & $\{0.009\}\cup [1.330,1,331]$ &
$\{0.012\}\cup[1.326,1.327]$
\end{tabular}
\end{center}
\caption{Numerical computation of the dichotomy spectrum and of 
  outer angular spectra for the nonautonomous
  $3$D-H\'enon system \eqref{variational}.\label{num1}}
\end{table}

Note that for the autonomous system \eqref{var_auto}, trace spaces are
eigenspaces. For a fair comparison however, we compute the trace
spaces also in the 
autonomous setting with the more general nonautonomous algorithm. 

\begin{table}[hbt]
\begin{center}
\begin{tabular}{c|c|c|c}
$N$ & $\Sigma_{\mathrm{ED}}^{\mathrm{approx}}$ & $\Sigma_1^N$ &
 $\Sigma_2^N$\\\hline
$50$ & $[0.781,0.782]\cup[1.473,1.474]$ & $\{10^{-16}\}\cup [1.328,1,344]$ &
$\{10^{-16}\}\cup[1.320,1.337]$\\\hline
$100$ & $[0.781,0.782]\cup[1.473,1.474]$ & $\{10^{-16}\}\cup [1.328,1,341]$ &
$\{10^{-16}\}\cup[1.321,1.334]$\\\hline
$1000$ & $[0.781,0.782]\cup[1.473,1.474]$ & $\{10^{-16}\}\cup [1.335,1,336]$ &
$\{10^{-15}\}\cup[1.328,1.328]$\\\hline 
$2000$ & $[0.781,0.782]\cup[1.473,1.474]$ & $\{10^{-16}\}\cup [1.335,1,336]$ &
$\{10^{-15}\}\cup[1.328,1.328]$\\
\end{tabular}
\end{center}
\caption{Numerical computation of the dichotomy spectrum and of 
  outer angular spectra for the autonomous
  $3$D-H\'enon system \eqref{var_auto}.\label{num2}}
\end{table}

The data in Table \ref{num2} show that the finite time outer angular
$N$-spectra depend on $N$. This causes $\rho\approx 0.697 \neq 1$ in
the normal form \eqref{rhomatrix} that belongs to the two-dimensional
eigenspace of $Df(\xi)$. However, these spectra converge quickly in
$N$ towards the infinite time spectra \eqref{Henspec1} and
\eqref{Henspec2}, respectively. 

Next, we discuss the results for the variational equation along the
homoclinic orbit in Table \ref{num1}. On the one hand, the center
part of the homoclinic orbit, see Figure 
\ref{Orbit_norm}, has for small $N$ a strong influence on the finite
time outer 
angular $N$-spectra. For sufficiently large $N$, on the other hand,
$Df(\bar x_n)$ is 
exponentially close to $Df(\xi)$, if $n$ lies at the boundaries of the
finite interval $[0,N]\cap \N_0$. Hence,
the finite time outer angular $N$-spectra of both systems
\eqref{var_auto} and \eqref{variational} converge for increasing $N$
towards each other. These observations are in line with the
theoretical results from Section \ref{sec4:FT}. 

\begin{figure}[hbt]
\begin{center}
\includegraphics[width = 0.6\textwidth]{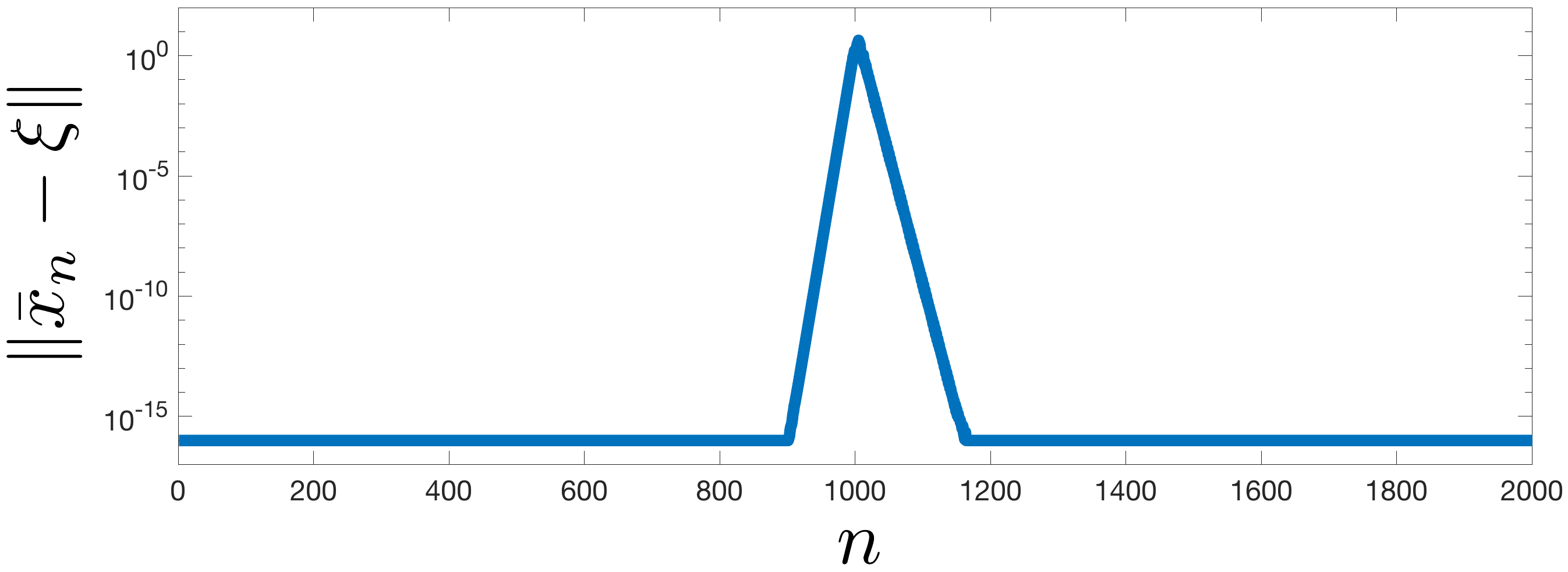}
\end{center}
\caption{Distance of a homoclinic orbit\protect\footnotemark
   $(\bar x_n)_{n\in\{0,1,\dots,2000\}}$ of the $3$D-H\'enon system
  \eqref{Hmap} to the fixed point $\xi$ in a semilogarithmic
  scale.\label{Orbit_norm}} 
\end{figure}
\footnotetext{The index of the
    orbit $(\bar x_n)_{n\in\{-1000,1000\}}$ from Example
    \ref{ex3:henon3} is shifted to the interval $[0,2000]\cap\N_0$.}

\subsection{Multi-humped homoclinic orbits for the 3D-H\'enon system}
While outer angular spectra of the variational equation along a
homoclinic $3$D-H\'enon orbit essentially depend only on the limit
matrix $DF(\xi)$, the situation changes when we consider so called
multi-humped homoclinic orbits, see \cite{BHS16}. We construct these
orbits by copying the center part of length $M$ of the primary homoclinic
orbit repeatedly, see Figure \ref{Multi}. 
Then Newton's method, applied to this pseudo-orbit
results in an $F$-orbit $(\bar x^M_n)_{n\in J}$ on the finite interval
$J=[0,2000]\cap \N_0$. 
\begin{figure}[hbt]
\begin{center}
\includegraphics[width = 0.55\textwidth]{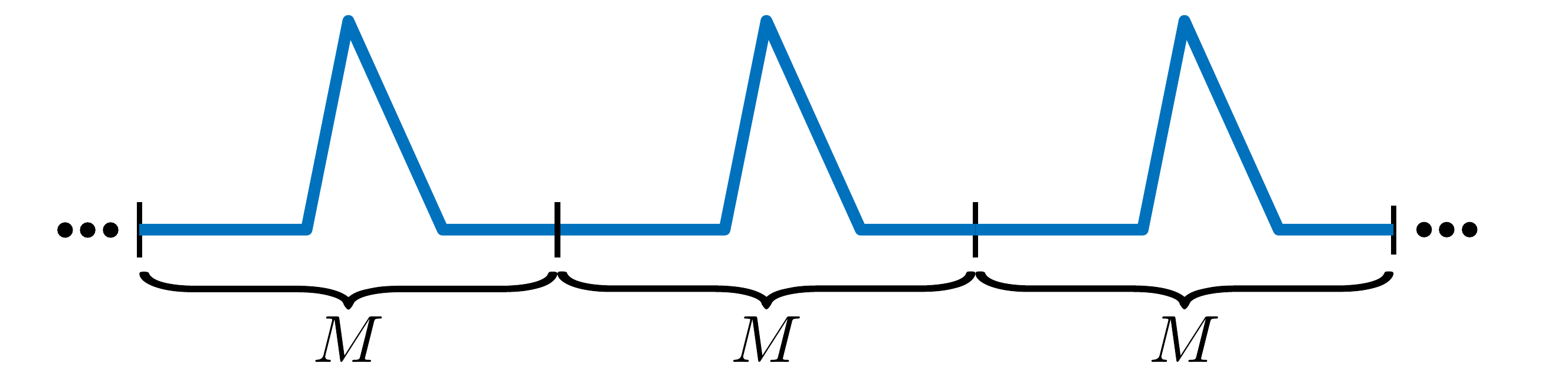}
\end{center}
\caption{Construction of multi-humped orbits.\label{Multi}}
\end{figure}

For large $M$, the dynamics at the fixed point essentially determines
the outer angular spectra. However, for small values of $M$, the
multiple center parts of the primary homoclinic orbit alter the outer
angular spectra, see Table \ref{num3}. For $M\in\{50,100,200\}$ we
observe that the dichotomy
spectrum consists of three intervals, resulting in one-dimensional
trace spaces. For $M = 400$, the algorithm from \cite[Section
4.1]{BeHu22} no longer separates the
values of the dichotomy spectrum and finds the interval $[0.773,
0.788]$. This interval leads to a two-dimensional
trace space and, as a consequence, intervals also occur 
in the outer angular spectra for $M = 400$.

Passing from $M$ to $2M$, we further observe that any value within 
the outer angular spectrum $\Sigma_1^{2001}$ or $\Sigma_2^{2001}$ for
$2M$ is roughly the arithmetic mean of 
the corresponding value for $M$ and
the respective value from \eqref{Henspec1} and \eqref{Henspec2}.  
In other words, the distance to the values from \eqref{Henspec1} and
\eqref{Henspec2} is halved when $M$ is doubled.  
Such a behavior is suggested by the fact that orbits stay twice as
long near the fixed point. 

\begin{table}[hbt]
\begin{center}
\begin{tabular}{l}
\begin{tabular}{c|c}
$M$ & $\Sigma_{\mathrm{ED}}^{\mathrm{approx}}$  \\\hline
$50$ & $[0.7230, 0.7234]\cup [0.8298,0.8302]\cup[1.4992, 1.4994]$ \\\hline
$100$ & $[0.7492, 0.7494]\cup[0.8077,0.8079]\cup[1.4868,1.4870]$  \\\hline
$200$ & $[0.7634, 0.7662] \cup [0.7935, 0.7966]\cup [1.4775, 1.4827]$ \\\hline
$400$ & $[0.7733, 0.7884] \cup [1.4741, 1.4791]$ \\\hline
\end{tabular}\\[15mm]
\begin{tabular}{c|c|c}
$M$ & $\Sigma_1^{2001}$ &
$\Sigma_2^{2001}$\\\hline
$50$ & 
$\{0.353, 1.135, 1.260\} $ & $\{0.480, 1.248, 1.264\}$\\\hline
$100$ & 
$\{0.177, 1.229, 1.300\}$ & $\{0.239, 1.295, 1.296\}$\\\hline
$200$ & 
$\{0.088, 1.281, 1.318\}$ & 
$\{0.120, 1.312, 1.313\}$\\\hline
$400$ &  $\{0.044\}\cup[1.311, 1.314]$ &
$\{0.060\}\cup[1.320, 1.321]$\\\hline
\end{tabular}
\end{tabular}
\caption{Numerical computation of the dichotomy spectrum and of 
  outer angular spectra for the $3$D-H\'enon system
  w.r.t.\ multi-humped homoclinic orbits with center parts of length
  $M$.\label{num3}} 
\end{center}
\end{table}

\subsection{Angular spectra for the Lorenz system}
\label{sec4:Lorenz}
The famous Lorenz system \cite{L63} is given by the ODE
\begin{equation}\label{ODElorenz}
\begin{pmatrix}
y_1\\y_2\\y_3
\end{pmatrix}'
=
\begin{pmatrix}
\sigma(y_2-y_1)\\\rho y_1 - y_2 - y_1 y_3\\ y_1 y_2 - \beta y_3
\end{pmatrix}
\quad \text{with parameters}\quad \sigma = 10,\ \rho = 28,\ \beta = \tfrac 83.
\end{equation}
In order to apply our results, we discretize this ODE and compute the 
$h$-step map $F_h$ for $h\in\{0.05, 0.1, 0.2\}$. For this task, we
apply the classical 
Runge-Kutta scheme with step size $10^{-4}$. For the resulting discrete
time system
\begin{equation}\label{lorenzh}
x_{n+1} = F_h(x_n),\quad n\in\N_0
\end{equation}
we calculate an orbit $(\bar x_n)_{n\in\{0,\dots,11000\}}$ w.r.t.\ the
initial value 
$\bar x_0=\begin{pmatrix}10 & 10 & 10\end{pmatrix}^\top$ that
converges towards the Lorenz attractor.
To the resulting variational equation 
\[
u_{n+1} = DF_h(\bar x_n)u_n,\quad n = 0,\dots,11000
\]
we apply the algorithm from Section \ref{algo} with left and right
buffer intervals of length $500$. Here, the Jacobians $DF_h(\bar x_n)$ are
computed numerically using the central difference quotient.
Table \ref{num4} gives the resulting spectra for $h\in\{0.05, 0.1,
0.2\}$.

\begin{table}[hbt]
\begin{center}
\begin{tabular}{l}
\begin{tabular}{c|c}
$h$ & $\Sigma_{\mathrm{ED}}^{\mathrm{approx}}$  \\\hline
$0.05$ & $[0.4821, 0.4833]\cup [0.9995, 1.0005]\cup[1.0445, 1.0478]$
  \\\hline 
$0.1$ & $[0.2325, 0.2332]\cup [0.9995, 1.0007]\cup[1.0928, 1.0971]$
  \\\hline 
$0.2$ & $[0.0542, 0.0544]\cup [0.9994, 1.0006]\cup[1.1956, 1.2004]$
  \\\hline 
\end{tabular}\\[15mm]
\begin{tabular}{c|c|c}
$h$ & $\Sigma_1^{10001}$ &
$\Sigma_2^{10001}$\\\hline
$0.05$ & 
$\{0.2039, 0.3803, 0.4234\} $ & $\{0.0689, 0.3604, 0.4188\}$\\\hline
$0.1$ & 
$\{0.3925, 0.7282, 0.8268\} $ & $\{0.1356, 0.7021, 0.8197\}$\\\hline
$0.2$ & 
$\{0.6552, 0.7334, 0.9727\} $ & $\{0.2475, 0.7934, 0.9752\}$\\\hline 
\end{tabular}
\end{tabular}
\caption{Dichotomy spectrum and angular spectra for the $h$-step
  Lorenz map w.r.t.\ an orbit on the Lorenz attractor. \label{num4}}
\end{center}
\end{table}

We observe that spectral values of the dichotomy spectrum are squared
when passing from $h$ to $2h$. For the corresponding values of the
outer angular spectrum, we expect them to double for sufficient small
$h$, while this doubling cannot be expected for larger values of
$h$. This interpretation is in line with the numerical data in Table
\ref{num4}. 

We shift the orbit outside the buffer intervals to $(\bar
x_n)_{n\in\{0,\dots,10^4\}}$ and illustrate the trace spaces
$\cD_n(1,3)=\{\cW_n^{1}, \cW_n^{2}, \cW_n^{3}\}$ for $n=2400$ and
$h=0.05$ in Figure \ref{lorenz1}. 

\begin{figure}[hbt]
\begin{center}
\includegraphics[width = 0.70\textwidth]{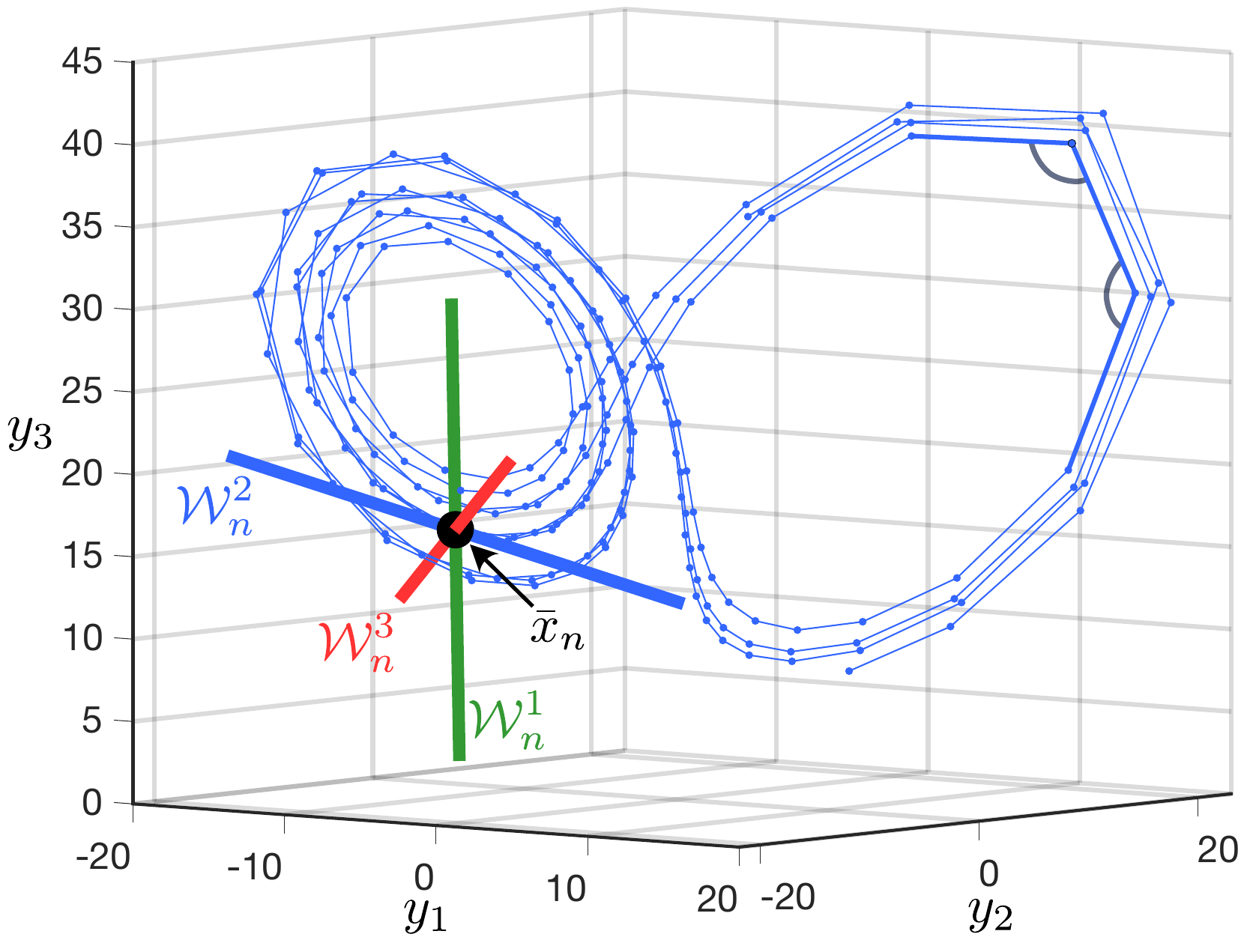}
\end{center}
\caption{Part of an $F_{0.05}$-orbit $(\bar
  x_n)_{n\in\{2300,\dots,2500\}}$ (connected with lines) of the Lorenz system
  \eqref{lorenzh}. For $n = 2400$ the three trace spaces
  $\cW_n^{1,2,3}$ are shown.\label{lorenz1}}
\end{figure}
The trace space $\cW_n^2$ lies in the direction of the flow of the ODE
\eqref{ODElorenz} and
belongs to the spectral interval around $1$ of the dichotomy
spectrum. Here the 
maximal value of the outer angular spectrum $0.4234$ is achieved. The
trace space $\cW_n^1$ also lies in the `plane' of the attractor and
leads to the spectral value $0.3803$. Finally, $\cW_n^3$ points
outside of the `plane' of the attractor and gives the smallest value
$0.2039$ of the outer angular spectrum.  

The outer angular spectrum of dimension $s=2$ is calculated w.r.t.\
the trace spaces $\cD_n(2,3) = \{\cW_n^1\oplus \cW_n^2,
\cW_n^1\oplus \cW_n^3, \cW_n^2\oplus \cW_n^3\}$. The largest value
$0.4188$ of $\Sigma_2$ is attained in $\cW_n^2\oplus \cW_n^3$ while
$\cW_n^1\oplus \cW_n^3$ results in the angular value $0.3604$. The
smallest value $0.0689$ is attained in the `plane' of the attractor
$\cW_n^1\oplus \cW_n^2$. 

The angle between successive iterates of the $h$-step Lorenz map on
average, as indicated in the upper right of 
Figure \ref{lorenz1}, is an alternative way to measure the rotation of
a trajectory. The resulting values are given in Table \ref{num5}. For
sufficiently small $h$, these values turn out to be close to the
largest spectral value of $\Sigma_1^{10001}$. 

\begin{table}[hbt]
\begin{center}
\begin{tabular}{c|c|c|c}
$h$ & $0.05$ & $0.1$ & $0.2$\\\hline
angle on average & $0.4227$ & $0.8322$ & $1.0993$
\end{tabular}
\caption{Angle between successive iterates on average for the $h$-step
  Lorenz 
  map \eqref{lorenzh}.\label{num5}} 
\end{center}
\end{table}

Angular values for continuous time systems have been introduced in
\cite[Definition 4.3]{BeHu23X}. These normalized values are defined as
\begin{equation}\label{conti}
\theta_{s,h,T}^{\mathrm{cont}} = \sup_{V\in\cD_0(s,d)} \frac 1T \sum_{j=1}^N
\ang(\Phi_h(j,0)V , \Phi_h(j-1,0)V),\quad N = \tfrac Th,
\end{equation}
where $T$ denotes the length of the time interval, $h$ the step size
and $\Phi_h$ is the solution operator of the $h$-step Lorenz map. 
In the limit $h\to 0$, $\theta_{s,h,T}^{\mathrm{cont}}$ measures the
average rotation of an $s$-dimensional subspace per unit time interval
when observed during 
$T$ time units.
When the time $T$ of observation  tends to infinity we obtain
an angular value of the continuous time system.
Note that $\theta_{s,h,T}^{\mathrm{cont}}$ is not restricted
to $[0,\frac \pi 2]$, since a subspace may rotate multiple times on a
time interval of length $1$. 
For the Lorenz system, the supremum in \eqref{conti} is attained in
$\cW_0^2$ for $s=1$ and in $\cW_0^2\oplus \cW_0^3$ for $s=2$.
In Table \ref{num6}, we compute $\theta_{s,h,T}^{\mathrm{cont}}$ for
$s\in\{1,2\}$, $T=500$ and $h\in\{0.025,0.05,0.1,0.2\}$. To ensure
accurate results, additional left and right buffer intervals (in time) of
length $T_{\mathrm{buffer}} = 25$ are added. The data in Table
\ref{num6} illustrate the convergence of
$\theta_{s,h,T}^{\mathrm{cont}}$ as $h\to 0$ towards the 
angular value of the continuous system in finite time $T=500$.

\begin{table}[hbt]
\begin{center}
\begin{tabular}{c|c|c|c|c}
$h$ & $0.025$ & $0.05$ & $0.1$ & $0.2$\\\hline
$\theta_{1,h,T}^{\mathrm{cont}}$ & $8.4798$ & $8.4672$ & $8.2896$ &
$4.8752$\\\hline
$\theta_{2,h,T}^{\mathrm{cont}}$ & $8.3816$ & $8.3753$ & $8.2209$ & $4.8941$
\end{tabular}
\caption{Normalized angular values $\theta_{1,h,T}^{\mathrm{cont}}$ and 
$\theta_{2,h,T}^{\mathrm{cont}}$ for the $h$-step Lorenz
  map \eqref{lorenzh}.\label{num6}} 
\end{center}
\end{table}

\section{Further types of angular spectrum}
\label{sec5}
In \cite[Section 3]{BeFrHu20} we defined further types of angular
values, such as inner or uniform angular values. The inner 
versions take  the supremum and  the limit as $n \to \infty$ in
Defintion \ref{def3:1} in reverse order while the uniform versions 
measure averages of angles starting at
an arbitrary time and lasting for sufficiently long time.
In this section we briefly discuss how angular spectra associated to
these modified angular values should be defined 
and we mention some of their basic properties. Moreover, we suggest
how the numerical methods for outer angular 
values from Section \ref{Sec_num} should be modified.

\subsection{Inner angular spectrum}
The {\bf inner angular values} of dimension $s$ are defined as follows
(cf.\ \cite[Definition 4.1]{BeHu23X}) 
\begin{alignat*}{2}
  \theta_{s}^{\varlimsup,\sup} & = 
  \varlimsup_{n\to\infty} \sup_{V \in \mathcal{G}(s,d)} \alpha_n(V), \quad
   \theta_s^{\varliminf,\sup} & =
  \varliminf_{n\to\infty}  \sup_{V \in \mathcal{G}(s,d)}\alpha_n(V),\\
  \theta_{s}^{\varlimsup,\inf} & =
  \varlimsup_{n\to\infty}   \inf_{V \in \mathcal{G}(s,d)} \alpha_n(V), \quad
   \theta_s^{\varliminf,\inf} & =
  \varliminf_{n\to\infty} \inf_{V \in \mathcal{G}(s,d)} \alpha_n(V).
\end{alignat*}
This suggests the following definition of the \textbf{inner angular spectrum}
\begin{equation}\label{inner}
\Sigma^{\mathrm{in}}_s\coloneqq\cl\big\{\theta\in[0,\tfrac \pi 2]: \exists
(V_n)_{n\in\N}\in\cG(s,d)^\N:
\varliminf_{n\to\infty} \alpha_n(V_n) \le \theta\le
\varlimsup_{n\to\infty}\alpha_n(V_n)\big\}. 
\end{equation}
In order to distinguish this from the outer angular spectrum in
Definition \ref{def3:1} we now  write 
$\Sigma_s^{\mathrm{out}}$ instead of $\Sigma_s$. Some properties of 
$\Sigma_s^{\mathrm{in}}$  are collected in the following proposition.  
\begin{proposition} \label{prop5:properties}
  The inner angular values and the inner angular spectrum satisfy
  \[
\begin{matrix}
\theta_s^{\varliminf,\inf} & \le &  \theta_s^{\varlimsup,\inf}\\
\rle && \rle\\ 
\theta_s^{\varliminf,\sup} & \le &  \theta_s^{\varlimsup,\sup}.
\end{matrix} 
\]
  \begin{equation} \label{eq5:specbound}
\Sigma_s^{\mathrm{out}},  \{\theta_s^{\varliminf,\inf},\theta_s^{\varlimsup,\inf},
  \theta_s^{\varliminf,\sup}, \theta_s^{\varlimsup,\sup}\} \subseteq
  \Sigma_s^{\mathrm{in}} 
\subseteq [\theta_s^{\varliminf,\inf},\theta_s^{\varlimsup,\sup}].
  \end{equation}
  If $\varliminf$ and $\varlimsup$ coincide, i.e.\
  $\theta_s^{\varliminf,\inf}=\theta_s^{\varlimsup,\inf}$ and 
  $  \theta_s^{\varliminf,\sup}=\theta_s^{\varlimsup,\sup}$ then
  $\Sigma_s^{\mathrm{in}}$ equals its maximal interval, i.e.\ 
  \begin{equation} \label{eq5:inspeceq}
    \Sigma_s^{\mathrm{in}}=
    [\theta_s^{\varliminf,\inf},\theta_s^{\varlimsup,\sup}]. 
    \end{equation}
\end{proposition}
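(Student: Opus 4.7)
The plan is to dispatch the three assertions in turn by writing the four inner angular values in a compact form. Set $m_n \coloneqq \inf_{V\in\cG(s,d)}\alpha_n(V)$ and $M_n \coloneqq \sup_{V\in\cG(s,d)}\alpha_n(V)$; then the four inner values are $\varliminf_n m_n$, $\varlimsup_n m_n$, $\varliminf_n M_n$ and $\varlimsup_n M_n$ respectively. With this notation the diagram of inequalities is immediate: the horizontal arrows are the trivial $\varliminf\le\varlimsup$, while the vertical arrows come from the pointwise estimate $m_n\le M_n$ combined with the monotonicity of $\varliminf$ and $\varlimsup$.

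For the inclusions \eqref{eq5:specbound} I would first address the upper bound. Every sequence $(V_n)\in \cG(s,d)^{\N}$ satisfies $m_n \le \alpha_n(V_n) \le M_n$, hence
\[
\theta_s^{\varliminf,\inf}\le \varliminf_{n\to\infty}\alpha_n(V_n)\le \varlimsup_{n\to\infty}\alpha_n(V_n)\le \theta_s^{\varlimsup,\sup},
\]
and the resulting interval bound is preserved by taking closures. The inclusion $\Sigma_s^{\mathrm{out}}\subseteq \Sigma_s^{\mathrm{in}}$ is then trivial: any $V\in\cG(s,d)$ witnessing $\theta\in\Sigma_s^{\mathrm{out}}$ gives an admissible constant sequence $V_n\equiv V$ in \eqref{inner}. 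To realize the four inner values themselves inside $\Sigma_s^{\mathrm{in}}$ I would pick $\tfrac{1}{n}$-optimal sequences: choose $V_n^-$ with $\alpha_n(V_n^-)\le m_n+\tfrac1n$ and $V_n^+$ with $\alpha_n(V_n^+)\ge M_n-\tfrac1n$. The sandwich $m_n\le\alpha_n(V_n^-)\le m_n+\tfrac1n$ yields $\varliminf_n\alpha_n(V_n^-)=\varliminf_n m_n$ and $\varlimsup_n\alpha_n(V_n^-)=\varlimsup_n m_n$ simultaneously, so both $\theta_s^{\varliminf,\inf}$ and $\theta_s^{\varlimsup,\inf}$ satisfy the defining condition of $\Sigma_s^{\mathrm{in}}$ with $(V_n^-)$; symmetrically $(V_n^+)$ handles $\theta_s^{\varliminf,\sup}$ and $\theta_s^{\varlimsup,\sup}$.

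The most delicate step is the collapse \eqref{eq5:inspeceq}, although it remains elementary. Under the hypothesis the sequences $m_n$ and $M_n$ converge, say $\lim_n m_n=a\coloneqq\theta_s^{\varliminf,\inf}$ and $\lim_n M_n=b\coloneqq\theta_s^{\varlimsup,\sup}$; the inclusion $\Sigma_s^{\mathrm{in}}\subseteq[a,b]$ is already contained in \eqref{eq5:specbound}. For the reverse inclusion I would interlace the two $\tfrac1n$-optimal sequences from the previous paragraph, setting $V_n=V_n^-$ on even $n$ and $V_n=V_n^+$ on odd $n$. Along the even indices $\alpha_n(V_n)\to a$ and along the odd indices $\alpha_n(V_n)\to b$, so $\varliminf_n\alpha_n(V_n)=a$ and $\varlimsup_n\alpha_n(V_n)=b$; consequently every $\theta\in[a,b]$ meets the sandwich condition in \eqref{inner} already before taking closures. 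No tool beyond elementary manipulation of $\varliminf$ and $\varlimsup$ of the scalar sequences $m_n$ and $M_n$ is required.
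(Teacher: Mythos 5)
Your proof is correct, and for the collapse \eqref{eq5:inspeceq} it follows a genuinely different and more elementary route than the paper.
For the diagram and the inclusions \eqref{eq5:specbound} the paper simply defers to the analogous arguments in \cite[(3.10)]{BeFrHu20} and Proposition \ref{prop2:relate}, and your fleshed-out version (setting $m_n=\inf_V\alpha_n(V)$, $M_n=\sup_V\alpha_n(V)$, picking $\tfrac1n$-optimal $V_n^\pm$, and observing that a single sequence $V_n^-$ realizes \emph{both} $\theta_s^{\varliminf,\inf}$ and $\theta_s^{\varlimsup,\inf}$) is exactly the intended argument.
The divergence is in \eqref{eq5:inspeceq}: the paper exploits the topological structure of the Grassmannian, invoking continuity of $\alpha_n$ on the connected manifold $\cG(s,d)$ together with the intermediate value theorem to manufacture, for each fixed $\theta\in(\theta_s^{\varliminf,\inf},\theta_s^{\varlimsup,\sup})$ and each large $n$, a subspace $V_n^\theta$ with $\alpha_n(V_n^\theta)=\theta$ exactly, whence $\alpha_n(V_n^\theta)\to\theta$.
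Your interlacing trick (alternate $V_n^-$ and $V_n^+$ along even and odd indices) instead produces a single sequence whose $\varliminf$ and $\varlimsup$ hit the two endpoints $a$ and $b$, so every $\theta\in[a,b]$ satisfies the sandwich condition of Definition \eqref{inner} at once.
This buys a proof that uses no topology at all—only elementary manipulation of scalar $\varliminf/\varlimsup$—and even handles the endpoints $a,b$ uniformly, whereas the paper's IVT argument is stated for interior $\theta$ and needs the endpoints handled separately (they are, of course, already covered by \eqref{eq5:specbound}).
The paper's approach does produce the slightly stronger information that for each interior $\theta$ there is a sequence with $\lim_n\alpha_n(V_n^\theta)=\theta$, but this is not needed for the stated conclusion.
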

\begin{proof}
  The proof of the first assertions is very similar to
  \cite[(3.10)]{BeFrHu20} and to Proposition \ref{prop2:relate}. 
  Therefore, we only prove \eqref{eq5:inspeceq}.
     Recall that the function $\alpha_n:\cG(s,d) \to \R$ is continuous
     on the connected 
    manifold $\cG(s,d)$. 
Let $\theta_s^{\varliminf,\inf} < \theta < \theta_s^{\varlimsup,\sup}$
and let  
$0<\eps
<\min(\theta-\theta_s^{\varliminf,\inf},\theta_s^{\varlimsup,\sup}-\theta)$. 
By our assumption there exists an $N\in\N$ such that for each $n\ge N$
we find 
subspaces $V_n^\pm\in\cG(s,d)$ with 
$$
|\theta_s^{\varliminf,\inf}-\alpha_n(V_n^-)|\le \eps,\quad
|\theta_s^{\varlimsup,\sup} - \alpha_n(V_n^+)| \le \eps.
$$
Then the intermediate value theorem applies to $\alpha_n$  on 
a path from $V_n^-$ ot $V_n^+$. Hence, there exists some
$V_n^\theta\in\cG(s,d)$ such that 
$$
\alpha_n(V_n^\theta) = \theta.
$$
As a consequence 
$$
\lim_{n\to\infty} \alpha_n(V_n^\theta) = \lim_{n\to \infty} \theta =
\theta \in \Sigma_s^{\mathrm{in}}.
$$ 
\
\end{proof}

Note that the existence of the limits
$\theta_s^{\varliminf,\inf}=\theta_s^{\varlimsup,\inf}$, 
  $  \theta_s^{\varliminf,\sup}=\theta_s^{\varlimsup,\sup}$
is satisfied in the setting of random
dynamical systems, see \cite[Section 4]{BeFrHu20}.

\subsection{Uniform outer angular spectrum}

We introduce a uniform variant of the outer angular spectrum. Uniform
outer angular values of dimension $s$ are defined in analogy to the
construction of Bohl exponents see \cite{Barabanov2017},
  \cite[Ch.III.4]{dk74}. Let
\[
\alpha_{n,k}(V):
\begin{array}{rcl} \cD_0(s,d) & \to & [0,\tfrac \pi 2]\\[1mm]
V & \mapsto &  \displaystyle\frac 1n
\sum_{j=k+1}^{k+n}\ang(\Phi(j-1,0)V,\Phi(j,0)V)
\end{array}
\]
and 
\begin{equation}\label{unif}
\theta_s^{\inf,\lim,\inf} = \inf_{V\in\cD_0(s,d)} \lim_{n\to \infty}
\inf_{k\in\N_0} \alpha_{n,k}(V),\quad
\theta_s^{\sup,\lim,\sup} = \sup_{V\in\cD_0(s,d)} \lim_{n\to \infty}
\sup_{k\in\N_0} \alpha_{n,k}(V).
\end{equation}
We refer to \cite[Lemma 3.3]{BeFrHu20} for the existence of the limits 
in \eqref{unif}.
The uniform outer angular spectrum is given by
\[
\Sigma_s^\mathrm{out,unif} \coloneqq \cl\big\{\theta\in[0,\tfrac \pi 2]:
\exists V\in\cG(s,d): \lim_{n\to \infty}\inf_{k\in\N_0}
\alpha_{n,k}(V) \le \theta\le 
\lim_{n\to\infty}\sup_{k\in\N_0}\alpha_{n,k}(V)\big\}. 
\]
For autonomous systems we observe that $\Sigma_s^{\mathrm{out}} =
\Sigma_s^{\mathrm{out,unif}}$ and in general, we obtain the relation
\[
\Sigma_s^{\mathrm{out}} , \{\theta_s^{\inf,\lim,\inf},
\theta_s^{\sup,\lim,\sup}\} \subset \Sigma_s^{\mathrm{out,unif}}.
\]
\subsection{Relations between angular spectra}
Revisiting the motivating Example \ref{ex2:exauto},\linebreak which is
autonomous, we observe for $s\in\{1,2\}$ that  
\[
  \Sigma_s^{\mathrm{out}}= \Sigma_s^{\mathrm{out,unif}} =
  \{0,\varphi\}\subset\Sigma_1^{\mathrm{in}} = [0,\varphi] . 
\]
  For Example \ref{E2} we obtain 
\[
\{0\} = \Sigma_1^{\mathrm{out}} = \Sigma_1^{\mathrm{out,unif}} 
\subset [0,\tfrac \pi{12}] \subseteq \Sigma^{\mathrm{in}}_1. 
\]
 Finally, for Example \ref{E1}
note that all subspaces $V\in\cG(s,d)$ have  the same
angular value, hence outer and inner spectra coincide. However, the
uniform outer spectrum is larger, more precisely
\[
\Sigma_1^{\mathrm{out}} = [\tfrac 23 \varphi_0+\tfrac 13 \varphi_1,
\tfrac 13 \varphi_0 + \tfrac 23 \varphi_1] =
\Sigma_1^{\mathrm{in}} \subset [\varphi_0,\varphi_1] = \Sigma_1^{\mathrm{out,unif}}.
\]

In conclusion, the outer angular spectrum provides the angular values
that may occur in the limit on average. The uniform outer angular
spectrum allows to start at arbitrary positions (in $n$) to determine
the angular values. However, a reliable approximation is much harder
to achieve numerically than for the outer angular spectrum. 
The inner angular spectrum provides a less refined 
spectral separation and often consists of a single interval; see
Proposition \ref{prop5:properties}. Furthermore, a reliable
numerical computation seems to be extremely costly due to the repeated search for suitable
subspaces $V_n \in \cG(s,d)$ at each $n \in \N$. Summarizing, we believe that the notion of the
outer angular spectrum is the most fruitful one, both theoretically and numerically.
    

\begin{appendix}
 
\section{Proof of Theorem \ref{thm3:rough}} \label{app2b}
The proof mixes various techniques from \cite{BH04,Hu03,P88}.
For every fixed $m \in J$ we consider an inhomogenous
system for matrices $U_n\in \R^{d,d}$, $n\in J$ 
      \begin{equation} \label{appb:inhom}
        \begin{aligned}
          U_{n+1}&=A_nU_n+ R_n,\quad n \in J, \\
          P_{n_-}^+U_{n_-}&= \delta_{n_-,m}P_{n_-}^+,
        \end{aligned}
      \end{equation}
      where $R_n \in \R^{d,d}$, $n \in J$ are given.
      Let us further introduce $m$-dependent weighted norms and spaces
      for $U_J=(U_n)_{n \in J}$: 
      \begin{alignat*}{2}
        \|U_J\|_{m,1}&= \sum_{\ell \in J} \Lambda_{\ell,m}
        \|U_{\ell}\|, \quad \Lambda_{\ell,m}&&=\begin{cases} 
        \lambda_+^{m-\ell}, & \ell \ge m, \\ \lambda_-^{m-\ell}, &
        \ell <m, \end{cases}\\ 
        \|U_J\|_{m,\infty}&= \max_{\ell \in J} \Lambda_{\ell,m}
        \|U_{\ell}\|, \quad Z_m&&=\{U_J\in (\R^{d,d})^J: 
         \|U_J\|_{m,\infty}< \infty\}, 
      \end{alignat*}
      where we set $\Lambda_{\ell,m}=0$ if $(\lambda_+=0, \ell \ge m)$
      or if $(\lambda_-=\infty, \ell < m)$. 
        The Green's function of the unperturbed system \eqref{diffeq}
        is given by  
      \begin{equation} \label{appb:Green}
        G(n,m)= \begin{cases} \Phi(n,m) P_m^+, & n \ge m \in J, \\
          - \Phi(n,m) P_m^-, &  m > n \in J.
        \end{cases}
      \end{equation}
      The GED shows that  $\|G(\cdot,m)\|_{m,\infty}\le K$ holds  and
      one  verifies that $G(\cdot,m)$ satisfies 
      \begin{equation} \label{appb:inhomG}
        \begin{aligned}
          G(n+1,m)&=A_nG(n,m)+ \delta_{n,m-1}I,\quad n \in J, \\
          P_{n_-}^+G(n_-,m)&= \delta_{n_-,m}P_{n_-}^+.
        \end{aligned}
      \end{equation}
      \begin{proposition} \label{appb:prop1}
        For every $R_J\in (\R^{d,d})^J$ with $\|R_J\|_{m,1}< \infty$
        there exists a unique solution $U_J\in Z_m$ 
        of \eqref{appb:inhom}. Further, the following estimate holds:
        \begin{equation} \label{appb:estu}
          \|U_J\|_{m,\infty}\le K \left(\lambda_{\star} \|R_J\|_{m,1}+
            |\delta_{n_-,m}|\right). 
        \end{equation}
      \end{proposition}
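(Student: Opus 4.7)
The plan is to construct the unique solution of \eqref{appb:inhom} explicitly from the Green's function \eqref{appb:Green}, read off the bound \eqref{appb:estu} from a weighted pointwise estimate for $G$, and obtain uniqueness by a standard ``bounded solution is zero'' argument. Concretely, I would set
\[
U_n \; = \; \sum_{\ell \in J} G(n,\ell+1)\,R_\ell \; + \; \delta_{n_-,m}\,\Phi(n,n_-)P_{n_-}^+, \qquad n \in J,
\]
and show first that this series is absolutely convergent in the $\|\cdot\|_{m,\infty}$-norm. Granted convergence, checking \eqref{appb:inhom} is routine: applying $A_n$ to $U_n$ and adding $R_n$ telescopes against the impulse $\delta_{n,\ell}I$ in \eqref{appb:inhomG}, while $P_{n_-}^+ U_{n_-} = \delta_{n_-,m}P_{n_-}^+$ holds because $P_{n_-}^+ G(n_-,\ell+1) = 0$ for every $\ell \in J$ (indeed $\ell+1 > n_-$).

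The main technical step is a pointwise weighted bound of the form
\[
\Lambda_{n,m}\,\|G(n,\ell+1)\| \; \le \; K\lambda_\star\,\Lambda_{\ell,m}, \qquad n,\ell \in J,
\]
which I would verify by short case analysis on the sign of $n-\ell-1$ (selecting the branch of $G$) and on the positions of $n$ and $\ell$ relative to $m$ (selecting the branches of $\Lambda$). In each case the dichotomy estimates \eqref{eq2:dich+}, \eqref{eq2:dich-} combined with $\lambda_+ < \lambda_-$ absorb mixed factors $\lambda_+^\cdot \lambda_-^\cdot$ into a single power of $\Lambda_{\ell,m}$, with the constant $\lambda_\star$ from \eqref{eq3:condpert} accounting for a single ``off-by-one'' mismatch of exponents. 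Summing against $R_\ell$ yields $\Lambda_{n,m}\sum_\ell \|G(n,\ell+1)\|\,\|R_\ell\| \le K\lambda_\star\|R_J\|_{m,1}$, while the initial contribution is bounded by $\Lambda_{n,n_-}\|\Phi(n,n_-)P_{n_-}^+\| \le K$ directly from \eqref{eq2:dich+}; combining both gives \eqref{appb:estu}.

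For uniqueness, the difference $V_J$ of two solutions in $Z_m$ satisfies the homogeneous equation with $P_{n_-}^+ V_{n_-} = 0$, so by invariance $V_n = \Phi(n,n_-)P_{n_-}^- V_{n_-} \in \range(P_n^-)$ for all $n \in J$. Applying \eqref{eq2:dich-} with the roles of $n$ and $m$ interchanged yields $\|V_n\| \ge K^{-1}\lambda_-^{n-n_-}\|V_{n_-}\|$, so for $n \ge m$ one obtains
\[
\Lambda_{n,m}\,\|V_n\| \; \ge \; K^{-1}\,\lambda_+^{m-n_-}\,\bigl(\lambda_-/\lambda_+\bigr)^{n-n_-}\|V_{n_-}\|,
\]
which blows up as $n \to \infty$ since $\lambda_+ < \lambda_-$; boundedness in $Z_m$ therefore forces $V_{n_-} = 0$ and hence $V_J \equiv 0$.

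I expect the bookkeeping in the weighted Green's function bound to be the main obstacle, particularly in the degenerate regimes $\lambda_+ = 0$ or $\lambda_- = \infty$ of Remark \ref{rem2:extreme} where one branch of $G$ collapses and the weight $\Lambda_{\cdot,m}$ changes its character. When $\lambda_- = \infty$ the condition $P_{n_-}^+ V_{n_-} = 0$ already pins down $V_{n_-} = 0$, and when $\lambda_+ = 0$ the solution reduces to the backward summation $U_n = -\sum_{\ell \ge n}\Phi(n,\ell+1)R_\ell$; both require a separate (but simpler) verification that the formula above lies in $Z_m$ and satisfies the estimate.
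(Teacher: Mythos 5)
Your proposal follows essentially the same route as the paper: construct the candidate solution from the Green's function
\[
U_n = \sum_{\ell\in J}G(n,\ell+1)R_\ell + \delta_{n_-,m}\,G(n,n_-),
\]
check the difference equation and the boundary condition against \eqref{appb:inhomG}, derive \eqref{appb:estu} from the dichotomy estimates, and obtain uniqueness by showing that a homogeneous $Z_m$-solution must have $V_{n_-}=0$.

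The one genuine organizational difference is that you propose to prove a \emph{pointwise} weighted bound
\[
\Lambda_{n,m}\,\|G(n,\ell+1)\|\le K\lambda_\star\,\Lambda_{\ell,m},\qquad n,\ell\in J,
\]
and then sum against $\|R_\ell\|$, whereas the paper carries out the case analysis directly inside the double sum (splitting $\sum_{\ell+1\le m}$, $\sum_{m<\ell+1\le n}$, $\sum_{n<\ell+1}$ for $n\ge m$, and the analogous split for $n<m$). These are the same estimates, but your factorization is arguably cleaner to present and to re-check; the two approaches cost the same and prove the same constant. Your uniqueness argument is the contrapositive of the paper's: the paper shows $\|P_{n_-}^-U_{n_-}\|\to 0$ from the boundedness of $\Lambda_{n,m}\|U_n\|$, while you show that nonzero $V_{n_-}$ would force $\Lambda_{n,m}\|V_n\|\to\infty$. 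Same inequality, two directions.

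Two small points to make precise in a full write-up. First, when bounding the $\delta_{n_-,m}$-term you correctly get $\Lambda_{n,n_-}\|\Phi(n,n_-)P_{n_-}^+\|\le K$, not $K\lambda_\star$; the paper silently does the same, and since the inequality $K\le K\lambda_\star$ need not hold when $\lambda_\star<1$, the statement of \eqref{appb:estu} should really be read with a suitable constant, or the bound should be combined as $K\lambda_\star\|R_J\|_{m,1}+K|\delta_{n_-,m}|$. This is an issue in the proposition's phrasing, not in your argument, and is harmless for the use in Theorem \ref{thm3:rough}. Second, you are right that the degenerate rates $\lambda_+=0$ or $\lambda_-=\infty$ need a separate (but easy) check, since the weight $\Lambda_{\cdot,m}$ collapses on one side and the uniqueness argument cannot run through the vanishing branch; the paper does not spell this out either, but the outcome is the same because in those cases one of the projectors is trivial and the remaining one-sided estimate pins down the solution directly.
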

      \begin{proof}
        First, we prove uniqueness. If $U_J\in Z_m$ solves the homogenous
        system \eqref{appb:inhom}, then we 
        have $P_{n_-}^+U_{n_-}=0$ and  
        for $n \ge m$
        \begin{align*}
          \| P_{n_-}^-U_{n_-}\| & = \|\Phi(n_-,n)P_n^-U_n\| \le K
                                  \lambda_-^{n_--n}
                                  \Lambda_{n,m}^{-1}\Lambda_{n,m}
                                  \|P_n^-U_n\|\\ 
          & \le K^2 \lambda_-^{n_-} \Big(
            \tfrac{\lambda_+}{\lambda_-}\Big)^n \lambda_+^{-m}
            \|U_J\|_{m,\infty}\to 0 
          \text{ as } n \to \infty.
        \end{align*}
        Therefore, $U_{n_-}=P_{n_-}^-U_{n_-}+P_{n_-}^+U_{n_-}=0$ and
        $U_J=0$ follows. Next we show that 
        \begin{equation} \label{appb:sol}
          U_n= (G_{[m]} R_J)_n\coloneqq \sum_{\ell \in J} G(n,\ell+1)
          R_{\ell} +\delta_{n_-,m} G(n,n_-), \quad n\in J 
        \end{equation}
        solves \eqref{appb:inhom} and satisfies \eqref{appb:estu} (we
        keep  the dependence of the map 
        $G_{[m]}$ on $m$ but suppress it for $U_n$).
        From the definition of $G$ we have
        \begin{align*}
          P_{n_-}^+U_{n_-}=P_{n_-}^+ \sum_{\ell \in
          J}(-P_{n_-}^-)\Phi(n_-,\ell+1)R_{\ell}+
          \delta_{n_-,m}P_{n_-}^+ 
  G(n_-,n_-)          =\delta_{n_-,m}P_{n_-}^+.
          \end{align*}
        Further, the properties of the transition operator $\Phi$
        yield for $n \in J$ 
        \begin{align*}
          U_{n+1}& = \sum_{\ell+1\le n+1}
                   \Phi(n+1,\ell+1)P_{\ell+1}^+R_{\ell} - 
          \sum_{\ell+1>n+1}\Phi(n+1,\ell+1)P_{\ell+1}^- R_{\ell}\\
          &+ \delta_{n_-,m}\Phi(n+1,n_-)
          = A_n\Big(\sum_{\ell+1\le n}
            \Phi(n,\ell+1)P_{\ell+1}^+R_{\ell}+
            \delta_{n_-,m}\Phi(n,n_-) \Big)\\ 
          &+
          P_{n+1}^+R_n 
           - A_n \Big(\sum_{\ell+1>n}\Phi(n,\ell+1)P_{\ell+1}^-
            R_{\ell}\Big) + P_{n+1}^-R_n 
          = A_nU_n +R_n.
        \end{align*}
        Finally, we prove the estimate \eqref{appb:estu}. For $n\ge m
        > n_-$ and $0<\lambda_+< \lambda_-$  we obtain 
        \begin{align*}
          \lambda_+^{m-n}\|U_n\| & \le  \lambda_+^{m-n} \Big(
                                   \sum_{\ell+1 \le
                                   m}\|\Phi(n,\ell+1)P_{\ell+1}^+
                                   R_{\ell}\| 
          + \sum_{m<\ell+1 \le n}\|\Phi(n,\ell+1)P_{\ell+1}^+
                                   R_{\ell}\| \Big. \\ 
          & \qquad  \qquad + \Big. \sum_{n<\ell+1
            }\|\Phi(n,\ell+1)P_{\ell+1}^- R_{\ell}\| \Big) \\ 
          & \le K\Big( \sum_{\ell+1 \le m}
            \Big(\tfrac{\lambda_+}{\lambda_-}\Big)^{m  -\ell-1}
            \lambda_-^{-1} 
          \lambda_-^{\ell-m} \|R_{\ell}\|  + \sum_{m<\ell+1 \le
            n}\lambda_+^{-1} \lambda_+^{m-\ell}  
          \|R_{\ell}\|\Big.\\
          & \qquad \qquad \Big. + \sum_{n<\ell+1 } \Big(
            \tfrac{\lambda_+}{\lambda_-}\Big)^{\ell-n} \lambda_-^{-1} 
          \lambda_-^{m-\ell} \|R_{\ell}\| \Big)\\
          & \le K\lambda_+^{-1} \sum_{\ell \in J}
            \Lambda_{\ell,m}\|R_{\ell}\| = K
            \lambda_+^{-1}\|R_J\|_{m,1}. 
        \end{align*}
        Note that in case $m=n_-$ this estimate holds for the first
        term in \eqref{appb:sol}. 
        For the second term we have for $n\ge n_-$
        \begin{align*}
          \lambda_+^{n_--n} \|G(n,n_-)\| &
                                           =\lambda_+^{n_--n}\|\Phi(n,n_-)P_{n_-}^+\| \le K. 
        \end{align*}
                Similarly, we find for $n_-\le n<m$ and $\lambda_+<
                \lambda_- < \infty$: 
        \begin{align*}
          \lambda_-^{m-n}\|U_n\| & \le \lambda_-^{m-n} \Big(
                                   \sum_{\ell+1 \le n}
                                   \|\Phi(n,\ell+1)P_{\ell+1}^+R_{\ell}\| 
          +\sum_{n<\ell+1 \le m }
                                   \|\Phi(n,\ell+1)P_{\ell+1}^-R_{\ell}\| \Big. \\ 
          &\qquad \qquad \Big . +\sum_{m<\ell+1 }
            \|\Phi(n,\ell+1)P_{\ell+1}^-R_{\ell}\| \Big) \\ 
         & \le K \Big( \sum_{\ell+1\le n}
           \Big(\tfrac{\lambda_+}{\lambda_-}\Big)^{n-\ell -1}
           \lambda_-^{-1} \lambda_-^{m-\ell} \|R_{\ell}\| 
          + \sum_{n< \ell+1\le m}\lambda_-^{-1} \lambda_-^{m-\ell}
           \|R_{\ell}\| \Big. \\ 
            & \qquad \quad +\Big. \sum_{m< \ell+1} \Big(
              \tfrac{\lambda_+}{\lambda_-}\Big)^{\ell-m}
              \lambda_-^{-1} \lambda_+^{m-\ell} 
          \|R_{\ell}\|\Big)\\
          & \le K \lambda_-^{-1} \sum_{\ell \in J} \Lambda_{\ell,m}
            \|R_{\ell}\|= K \lambda_-^{-1} \|R_J\|_{m,1}. 
          \end{align*}
          Collecting the estimates shows the assertion \eqref{appb:estu}.
        \end{proof}
       Next we show that the perturbed system
        \begin{equation} \label{appb:Greenpert}
          \begin{aligned}
            X(n+1,m)& = (A_n+E_n)X(n,m)+ \delta_{n,m-1}I, \quad n \in J, \\
            P_{n_-}^+X(n_-,m)& = \delta_{n_-,m}P_{n_-}^+,
          \end{aligned}
        \end{equation}
        has a unique solution $X(\cdot,m)\in Z_m $ for every $m \in J$.
        By Proposition \ref{appb:prop1} and \eqref{appb:sol} this
        system is equivalent to the fixed point problem 
        \begin{equation} \label{appb:intform}
          X(\cdot,m) = G_{[m]}(E_J X(\cdot,m)+\delta_{\cdot,m-1}I).
        \end{equation}
        Note that $\| \delta_{\cdot,m-1}I\|_{m,1}<\infty$ and $
        \|E_JX(\cdot,m)\|_{m,1}<\infty$ hold for $X(\cdot,m) \in Z_m$ as the 
        following estimate shows.
        From the derivation of \eqref{appb:estu} we obtain for
        $X_1,X_2\in Z_m$ 
        \begin{align*}
          \|\big[G_{[m]}(E_JX_1)-G_{[m]}(E_JX_2)\big](\cdot,m)\|_{m,\infty}
          & \le K\lambda_{\star} \sum_{\ell \in
            J}\Lambda_{\ell,m}\|E_{\ell} 
        (X_1(\ell,m)-X_2(\ell,m))\| \\
        & \le  q \|(X_1-X_2)(\cdot,m)\|_{m,\infty},
        \end{align*}
        where $q=K \lambda_{\star}\|E_J\|_{\ell^1(J)}<1$ by
        \eqref{eq3:condpert}. Hence, the contraction mapping theorem 
        applies to the system \eqref{appb:intform} which then has 
        a unique solution $\tilde{G}(\cdot,m)\coloneqq X(\cdot,m) \in
        Z_m$. Since $G(\cdot,m)$ solves \eqref{appb:intform} 
        with $E_J \equiv 0$ we obtain the estimates 
        \begin{align*}
          \|G(\cdot,m)- \tilde{G}(\cdot,m)\|_{m,\infty}& \le
               \frac{1}{1-q}\|G_{[m]}(E_J G(\cdot,m))-G_{[m]}(0)\|_{m,\infty}\\  
          & \le \frac{q}{1-q} \|G(\cdot,m)\|_{m,\infty}\le \frac{qK}{1-q}, \\
          \|\tilde{G}(\cdot,m)\|_{m,\infty} & \le \frac{qK}{1-q} + K =
                                              \frac{K}{1-q}=
                                              \tilde{K}. 
        \end{align*}
        This yields \eqref{Qest} provided we have shown that
        $\tilde{G}(n,m)$ has the representation \eqref{appb:Green}
        with projectors 
        $\tilde{P}_m^+\coloneqq  \tilde{G}(m,m)$, $\tilde{P}_m^-=I-
        \tilde{P}_m^+$ and solution operator 
        $\tilde{\Phi}$  of the perturbed system
        \eqref{eq3:perturbsyst}. 
        Indeed, this representation holds since $\tilde{G}$ solves
        \eqref{appb:Greenpert}:  
        \begin{align*}
          n\ge m: &\; \tilde{G}(n,m)=\tilde{\Phi}(n,m)\tilde{G}(m,m)=
                    \tilde{\Phi}(n,m)\tilde{P}_m^+, \\ 
          n=m-1: & \;
                   \tilde{P}_m^+=\tilde{\Phi}(m,m-1)\tilde{G}(m-1,m)+
                   I, \quad \tilde{G}(m-1,m)= 
          \tilde{\Phi}(m-1,m)(\tilde{P}_m^+-I),\\
          n<m-1: & \;
                 -\tilde{P}_m^-=\tilde{\Phi}(m,m-1)\tilde{G}(m-1,m)=
                 \tilde{\Phi}(m,m-1)\tilde{\Phi}(m-1,n)
                 \tilde{G}(n,m), \\ 
          & \; \tilde{G}(n,m)= - \tilde{\Phi}(n,m) \tilde P_m^-.
        \end{align*}
        To show that $\tilde{P}_m^+$ is a projector we prove that
        \begin{align*}
          Y(n,m)= \begin{cases} \tilde{\Phi}(n,m)\big(
            \tilde{P}_m^+\big)^2=\tilde{G}(n,m) \tilde{P}_m^+, & n \ge
            m, \\ 
            \tilde{\Phi}(n,m)\big(\big(\tilde{P}_n^+ \big)^2 -I
            \big)=\tilde{G}(n,m)(I+ \tilde{P}_m^+), & n <m 
          \end{cases}
        \end{align*}
        is another solution of  \eqref{appb:Greenpert} in $Z_m$, so
        that 
        $\tilde{P}_m^+=\tilde{G}(m,m)=\tilde{G}(m,m)\tilde{P}_m^+=
        \big(\tilde{P}_m^+)^2$ follows 
        by uniqueness. First, note that $\tilde{G}(\cdot,m) \in Z_m$
        and the bound $\|\tilde{P}_m^+\| \le \tilde{K}$ imply
        $Y(\cdot,m)\in Z_m$. 
        Then  $Y(\cdot,m)$
        solves \eqref{appb:Greenpert} for $n > m-1$ resp. $n<m-1$ as
        follows by multiplication with $\tilde{P}_m^+$
        resp. $I+\tilde{P}_m^+$ from the right. At $n=m-1$ we have 
        \begin{align*}
          Y(n+1,m)& = Y(m,m)=\tilde{\Phi}(m,m-1) \tilde{\Phi}(m-1,m)
          \big(\big(\tilde{P}_m^+\big)^2-I\big) + I \\
          &= \tilde{\Phi}(m,m-1)Y(m-1,m) + I.
        \end{align*}
        The proof is finished by using that  $\tilde{G}$ is a solution
        of  \eqref{appb:Greenpert}: 
        \begin{align*}
          P_{n_-}^+ Y(n_-,m) = \begin{cases}
            P_{n_-}^+\tilde{G}(n_-,m)(I + \tilde{P}_m^+)=0, & m>
            n_-,\\ 
            P_{n_-}^+ \tilde{G}(n_-,n_-)^2=P_{n_-}^+, & m=n_-. 
          \end{cases}
        \end{align*}
        As a last step we verify the invariance condition (i) of
        Definition \ref{edDef}. An induction shows that 
        it suffices to prove invariance for one step, i.e.
        \begin{align} \label{appb:inv}
          \tilde{\Phi}(m+1,m) \tilde{P}_m^+ & = \tilde{P}_{m+1}^+
                                              \tilde{\Phi}(m+1,m),
                                              \quad m \in J. 
        \end{align}
        Similar to the previous argument we consider, for $m \in J$ fixed,
        \begin{align*}
          Y(n,m+1)= \begin{cases}
            \tilde{\Phi}(n,m)\tilde{P}_m^+\tilde{\Phi}(m,m+1)
            =\tilde{G}(n,m)\tilde{\Phi}(m,m+1), & 
            n \ge m+1, \\
            - \tilde{\Phi}(n,m) \tilde{P}^-_m \tilde{\Phi}(m,m+1)=
            \tilde{G}(n,m) \tilde{\Phi}(m,m+1), & n < m+1, 
          \end{cases}
        \end{align*}
        and show that $Y$ solves \eqref{appb:Greenpert} for $m+1$
        instead of $m$. The unique solvability then yields 
        \begin{align*}
        \tilde{\Phi}(m+1,m) \tilde{P}_m^+ \tilde{\Phi}(m,m+1)=
          Y(m+1,m+1)= \tilde{G}(m+1,m+1) = \tilde{P}_{m+1}^+, 
          \end{align*}
        hence the assertion \eqref{appb:inv}. First note that
        $Y(\cdot,m+1)\in Z_{m+1}$ follows from $\tilde{G}(\cdot,m) \in
        Z_m$ and 
        the boundedness of $\tilde{\Phi}(m,m+1)$. Then the equation
        \eqref{appb:Greenpert} holds 
        for $n \neq m $ as can be seen by multiplying with
        $\tilde{\Phi}(m,m+1)$ from 
        the right. At $n=m$ we find
        \begin{align*}
          Y(m+1,m+1)& = \tilde{\Phi}(m+1,m) \tilde{P}_m^+
                      \tilde{\Phi}(m,m+1) \\ 
          & = \tilde{\Phi}(m+1,m)(I-
            \tilde{P}_m^-)\tilde{\Phi}(m,m+1)= I + \tilde{\Phi}(m+1,m)
            Y(m,m+1). 
        \end{align*}
        Finally, we conclude from \eqref{appb:Greenpert} for $n_-<m$
        \begin{align*}
          P_{n_-}^+ Y(n_-,m+1)& =  P_{n_-}^+  \tilde{G}(n_-,m)
                                \tilde{\Phi}(m,m+1) =0 =
                                \delta_{n_-,m+1}P_{n_-}^+ 
        \end{align*}
        and for $n_-=m$
        \begin{align*}
          P_{n_-}^+Y(n_-,n_-+1)& = P_{n_-}^+(\tilde{P}_{n_-}^+ - I)
                                 \tilde{\Phi}(n_-,n_-+1) \\ 
          & =P_{n_-}^+(\tilde{G}(n_-,n_-) - I) \tilde{\Phi}(n_-,n_-+1)=0
          = \delta_{n_-,n_-+1}P_{n_-}^+.
        \end{align*}
        This finishes the proof.  \hfill \proofbox

\end{appendix}


\section*{Acknowledgments}
Both authors are grateful to the Research Centre for Mathematical
Modelling ($\text{RCM}^2$) at Bielefeld University for continuous
support of their joint research. 
The work of WJB was funded by the Deutsche Forschungsgemeinschaft
(DFG, German Research Foundation) – SFB 1283/2 2021 – 317210226. 
TH thanks the Faculty of Mathematics at Bielefeld
University for further support. We thank the referees for useful
hints which improved the presentation.
 

\bibliographystyle{abbrv}

\end{document}